\date{7  December, 2023} 
\definecolor{Maroon}{HTML}{ad2231}
\definecolor{webgreen}{HTML}{008000}
\numberwithin{equation}{section}
\newtheorem{theorem}{Theorem}[section]
\newtheorem{lemma}[theorem]{Lemma}
\newtheorem{proposition}[theorem]{Proposition}
\newtheorem{condition}[theorem]{Condition}
\newtheorem{problem}[theorem]{Problem}
\theoremstyle{definition}
\newtheorem{example}[theorem]{Example}
\newtheorem{remark}[theorem]{Remark}
\newcommand{\refT}[1]{Theorem~\ref{#1}}
\newcommand{\refL}[1]{Lemma~\ref{#1}}
\newcommand{\refR}[1]{Remark~\ref{#1}}
\newcommand{\refS}[1]{Section~\ref{#1}}
\newcommand{\refSs}[1]{Sections~\ref{#1}}
\newcommand{\refSS}[1]{Section~\ref{#1}}
\newcommand{\refP}[1]{Problem~\ref{#1}}
\newcommand{\refE}[1]{Example~\ref{#1}}
\newcommand{\refApp}[1]{Appendix~\ref{#1}}
\newcommand{\refCond}[1]{Condition~\ref{#1}}
\newcommand\set[1]{\ensuremath{\{#1\}}}
\newcommand\bigset[1]{\ensuremath{\bigl\{#1\bigr\}}}
\newcommand\xpar[1]{(#1)}
\newcommand\bigpar[1]{\bigl(#1\bigr)}
\newcommand\Bigpar[1]{\Bigl(#1\Bigr)}
\newcommand\lrpar[1]{\left(#1\right)}
\newcommand\bigsqpar[1]{\bigl[#1\bigr]}
\newcommand\sqpar[1]{[#1]}
\newcommand\lrsqpar[1]{\left[#1\right]}
\newcommand\E{\operatorname{\mathbb E{}}}
\renewcommand\P{\operatorname{\mathbb P{}}}
\newcommand\Var{\operatorname{Var}}
\newcommand\Cov{\operatorname{Cov}}
\newcommand\Ge{\operatorname{Ge}}
\newcommand\Po{\operatorname{Po}}
\newcommand\gd{\delta}
\newcommand\gD{\Delta}
\newcommand\gG{\Gamma}
\newcommand\kk{\kappa}
\newcommand\gl{\lambda}
\newcommand\gs{\sigma}
\newcommand\gss{\sigma^2}
\newcommand\gth{\theta}
\newcommand\gU{\Upsilon}
\newcommand\cD{\mathcal D}
\newcommand\cE{\mathcal E}
\newcommand\cL{{\mathcal L}}
\newcommand\cP{\mathcal P}
\newcommand\cT{{\mathcal T}}
\newcommand\tS{{\widetilde S}}
\newcommand\bW{\mathbf W}
\newcommand\qq{^{1/2}}
\newcommand\qqw{^{-1/2}}
\newcommand\pfitemx[1]{\par#1:}
\newcommand\pfitemref[1]{\pfitemx{\ref{#1}}}
\newcommand\ntoo{\ensuremath{{n\to\infty}}}
\newcommand\kktoo{\ensuremath{{\kk\to\infty}}}
\newcommand\indic[1]{\mathbf{1}_{#1}}
\newcommand{\tend}{\longrightarrow}
\newcommand\dto{\overset{\mathrm{d}}{\tend}}
\newcommand\pto{\overset{\mathrm{p}}{\tend}}
\newcommand\eqd{\overset{\mathrm{d}}{=}}
\newcommand\bn{{\mathbf{n}}}
\newcommand\bp{{\mathbf{p}}}
\newcommand\bnk{{\bn_\kk}}
\newcommand\bpn{{\bp(\bn)}}
\newcommand\bpnk{{\bp(\bnk)}}
\newcommand\bbT{\mathbb T}
\newcommand\bbB{\mathbb{B}}
\newcommand\bbE{\mathbb{E}}
\newcommand\bbR{\mathbb R}
\newcommand\bbNo{\mathbb{N}_0}
\newcommand\Wbn{W_{\bn}^{\mathrm{b}}}
\newcommand\Tnk{\cT_\bnk}
\newcommand\Ttn{\widetilde{\cT}}
\newcommand\cTn{\cT_{\bn}}
\newcommand\mujk{\mu_{j\kk}}
\newcommand\pip{\pi_{\bp}}
\newcommand\pipn{\pi_{\bpn}}
\newcommand\pipnk{\pi_{\bpnk}}
\newcommand\etap{\eta_{\bp}}
\newcommand\gammap{\gamma_{\bp}}
\newcommand\tr{\widetilde r}
\newcommand\ENTX{|\bn|\pi_\bpn(T)}
\newcommand\muk{\mu_{\bnk}}
\newcommand\hGamma{\widehat{\gG}}
\newcommand\hgamma{\widehat{\gamma}}
\newcommand\xGamma{\widetilde{\gG}}
\newcommand\xgamma{\widetilde{\gamma}}
\newcommand\heta{\widehat{\eta}}
\newcommand\N{\mathrm{N}}
\newcommand\gdx[2]{\gd_{#1#2}}
\newcommand\ppi{\cP_1(\bbNo)}
\newcommand\bbTlab{\mathbb{T}^{\mathrm{lab}}}
\newcommand\cTlab{\mathcal{T}^{\mathrm{lab}}}
\newcommand\ocTlabd{\overline{\mathcal{T}_\bd}^{\mathrm{lab}}}
\newcommand\un{^{\mathrm{un}}}
\newcommand\bbTun{\mathbb{T}\un}
\newcommand\piun{\pi\un}
\newcommand\cTun{\mathcal{T}\un}
\newcommand\bd{{\mathbf{d}}}
\newcommand\bdk{{\bd_\kk}}
\newcommand\bbD{\mathbb{D}}
\newcommand\Ord{\operatorname{Ord}}
\newcommand\ord[1]{|\Ord(#1)|}
\newcommand\Tord{\overline{T}}
\newcommand\cTp{\cT_\bp}
\newcommand\ran{\fr}
\newcommand\fr{{\mathrm{fr}}}
\newcommand\cTran{\cT^\ran}
\newcommand\cTlabran{\cT^{\mathrm{lab},\ran}}
\newcommand\bT{\overline{T}}
\newcommand\hf{\widehat{f}}
\newcommand\hF{\widehat{F}}
\newcommand\bbTK{\bbT^{(K)}}
\newcommand\hw{\widehat{w}}
\newcommand\bw{{\mathbf{w}}}
\newcommand\hbw{\widehat{\bw}}
\newcommand\xS{\widetilde{\mathbb{S}}}
\newcommand\nx{n_\kk}
\newcommand\dgamma{\gamma^*}
\newcommand\dGamma{\Gamma^*}
\newcommand\Unif{\operatorname{Unif}}
\xdef\klockan{\the\count1.0\the\count255}
\xdef\klockan{\the\count1.\the\count255}\fi
\begin{document}
\title{Fringe trees for random trees with given vertex degrees}
\author{Gabriel Berzunza Ojeda\footnote{ {\sc Department of Mathematical Sciences, University of Liverpool, United Kingdom.} E-mail: gabriel.berzunza-ojeda@liverpool.ac.uk URL: \href{https://www.liverpool.ac.uk/mathematical-sciences/staff/gabriel-berzunza-ojeda/}{https://www.liverpool.ac.uk/mathematical-sciences/staff/gabriel-berzunza-ojeda/} }, \,\, Cecilia Holmgren\footnote{ {\sc Department of Mathematics, Uppsala University, Sweden.} E-mail: cecilia.holmgren@math.uu.se \href{https://katalog.uu.se/empinfo/?id=N5-824}{https://katalog.uu.se/empinfo/?id=N5-824}} \, \, and \, \, Svante Janson\footnote{ {\sc Department of Mathematics, Uppsala University, Sweden.} E-mail: svante.janson@math.uu.se URL: \href{http://www2.math.uu.se/~svante/}{http://www2.math.uu.se/~svante/}} 
}

\newcommand\extrafootertext[1]{%
    \bgroup
    \renewcommand\thefootnote{\fnsymbol{footnote}}%
    \renewcommand\thempfootnote{\fnsymbol{mpfootnote}}%
    \footnotetext[0]{#1}%
    \egroup
}
\extrafootertext{%
Supported by the Knut and Alice Wallenberg Foundation; 
 Ragnar Söderberg's Foundation; 
the Swedish Research Council.  
}

\maketitle


\vspace{0.1in}

\begin{abstract} 
We prove asymptotic normality for the number of fringe subtrees isomorphic
to any given tree in uniformly random trees with given vertex degrees. As
applications, we also prove corresponding results for random labelled trees
with given vertex degrees, for random simply generated trees (or conditioned
Galton--Watson trees), and for additive functionals. 

The key tool for our work is an extension to the multivariate setting of a
theorem by Gao and Wormald (2004), which provides a way to show asymptotic
normality by analysing the behaviour of sufficiently high factorial moments. 
\end{abstract}

\noindent {\sc Key words and phrases}: Additive functionals; conditioned Galton-Watson trees; fringe trees; random labelled trees; simply generated trees; toll functions.

\noindent {\sc MSC 2020 Subject Classifications}: 60C05; 05C05; 60F05.

\section{Introduction and main results}\label{S:Intro}

In this paper, we consider fringe trees of 
three types of random trees.
In the main parts of the paper, we consider 
random plane trees with given vertex statistics, i.e., a given number of
vertices of each degree.
As applications of these results, we also give corresponding results
for random labelled trees with given vertex degrees,
and for random simply generated trees (or conditioned Galton--Watson trees).
The main results are laws of large numbers and central limit theorems for
the number of fringe trees of a given type.

Let $\mathbb{T}$ be the set of all (finite) plane rooted trees (also called
ordered rooted trees); see e.g., \cite{Drmota2009}. Denote the size, i.e.\ the
number of vertices, of a tree $T$ by $|T|$. The (out)degree of a vertex $v
\in T$, denoted $d_{T}(v)$, is its number of children in $T$; 
thus leaves have degree $0$ and all other vertices have strictly positive
degree. 
The \emph{degree statistic} of a rooted tree $T$ is the sequence $\mathbf{n}_{T} =
(n_{T}(i))_{i \geq 0}$, where $n_{T}(i) := |\{v \in T: d_{T}(v) = i\}|$ is
the number of vertices of $T$ with $i$ children.
We have
\begin{eqnarray} \label{eq30}
|T| = \sum_{i \geq 0} n_{T}(i) = 1 + \sum_{i \geq 0} in_{T}(i).
\end{eqnarray}
\noindent A sequence $\mathbf{n} = (n(i))_{i \geq 0}$ is the degree
statistic of some tree if and only if 
$\sum_{i \geq 0} n(i) = 1 + \sum_{i \geq 0} in(i)$. 
For such sequences, we 
let $|\mathbf{n}| \coloneqq \sum_{i \geq 0} n(i)$ be the size of 
$\mathbf{n}$, and we
write $\mathbb{T}_{\mathbf{n}}$ for
the set of plane rooted trees with degree statistic $\mathbf{n}$.
We let $\mathcal{T}_{\mathbf{n}}$ be a uniformly random element of the set
$\mathbb{T}_{\mathbf{n}}$, and  we denote this by  
$\mathcal{T}_{\mathbf{n}} \sim {\rm Unif}(\mathbb{T}_{\mathbf{n}})$.

For $T \in \mathbb{T}$ and a vertex $v \in T$, let $T_{v}$ be the subtree of
$T$ rooted at $v$
consisting of $v$ and all its
descendants. We call $T_{v}$ a fringe (sub)tree of $T$.
We regard $T_v$ as an element of $\bbT$ and
let, for $T, T^{\prime} \in \mathbb{T}$, 
\begin{eqnarray} \label{eq2}
N_{T^{\prime}}(T) \coloneqq  |\{v \in T: T_{v} = T^{\prime}\}| = \sum_{v \in T} \mathbf{1}_{\{T_{v} = T^{\prime}\}},
\end{eqnarray}
\noindent i.e., the number of fringe subtrees of $T$ that are equal (i.e., isomorphic to) to $T^{\prime}$. A random fringe subtree $T^{\ran}$ of $T \in \mathbb{T}$ is the random rooted tree obtained by taking the fringe subtree $T_{v}$ at a uniform random vertex $v \in T$. Thus, the distribution of $T^{\ran}$ is given by
\begin{align}\label{sw1}
\mathbb{P}(T^{\ran} = T^{\prime})=\frac{N_{T'}(T)}{|T|}, \hspace*{4mm} \text{for} \hspace*{2mm} T^{\prime} \in \mathbb{T}. 
\end{align}

We prove an asymptotic result on the distribution of a random fringe subtree
in a random rooted plane tree with a given degree statistic. In order to
state the theorem, we need a little more terminology. 
(See also \refSS{SSnot} for some notation.)
For a degree statistic $\mathbf{n}$, denote by $\mathbf{p}(\mathbf{n}) = (p_{i}(\mathbf{n}))_{i \geq 0}$ its (empirical) degree distribution, i.e., 
\begin{align}\label{pin}
p_{i}(\mathbf{n}) := \frac{n(i)}{|\mathbf{n}|}, \hspace*{4mm} \text{for} \hspace*{2mm} i \geq 0. 
\end{align}
In this paper, we assume for convenience the following condition. 
\begin{condition} \label{Condition1}
$\mathbf{n}_{\kappa} = (n_{\kappa}(i))_{i \geq 0}$, $\kk\ge1$, 
are degree
statistics such that 
as $\kappa \rightarrow \infty$:
\begin{enumerate}[label=\upshape(\roman*)]
\item\label{Cond1a} 
$|\mathbf{n}_{\kappa}| \rightarrow \infty$, 
\item \label{Cond1b}
For every $i \geq 0$, we have
$p_{i}(\mathbf{n}_{\kappa}) \rightarrow p_{i}$, 
where $\mathbf{p} = (p_{i})_{i \geq 0}$ is a 
probability distribution on $\bbNo$.
\end{enumerate} 
\end{condition}

\begin{remark}
The condition that $\bp$ is a probability distribution is no restriction.
In fact, the degree distribution $\bpnk$ has mean 
\begin{align}\label{mg1}
\sum_{i \geq 0}  ip_i(\bnk)
=\frac{1}{|\bnk|}\sum_{i \geq 0}  i n_\kk(i)
=\frac{|\bnk|-1}{|\bnk|}
<1,  
\end{align}
and thus the sequence of
distributions $\bpnk$ is always tight. Hence, if $p_i(\bnk)\to p_i$, for every
$i\ge0$, then $\bp=(p_i)_{i \geq 0}$ is a probability distribution. 
Note also that \ref{Cond1b}  says that
$\mathbf{p}(\mathbf{n}_{\kappa})$ converges weakly 
to $\mathbf{p}$, as
$\kappa \rightarrow \infty$. 
(As is well known, this is equivalent to convergence in total variation.)
\end{remark}

By \eqref{mg1} and Fatou's lemma, if \refCond{Condition1} holds, then
$\sum_{i \geq 0} ip_i\le1$. Conversely, it is easily seen that any such
probability distribution $\bp$ is the limit of $\bpnk$ for some sequence of
degree statistics $\bnk$.
In other words, the set of probability distributions $\bp$ that can appear as
limits in Condition \ref{Condition1} is precisely
the set of probability distributions $\bp$ on $\bbNo$ with mean 
$\sum_{i \geq 0} ip_i\le1$; we denote this set by $\ppi$.

For a probability distribution  $\mathbf{p} = (p_{i})_{i \geq  0}\in\ppi$,
let $\cTp$ be a Galton--Watson tree with offspring distribution $\bp$, and
define 
$\pip$ as the distribution of $\cTp$, i.e.,
(with $0^0:=1$ as usual)
\begin{align}\label{pip}
\pi_{\mathbf{p}}(T) 
\coloneqq 
\P\xpar{\cTp=T}
=\prod_{i \geq 0} p_{i}^{n_{T}(i)}
=\prod_{i \in\cD(T)} p_{i}^{n_{T}(i)}
, \qquad \text{for} \, \, \, T \in \mathbb{T},
\end{align}
where 
\begin{align}\label{cD}
\cD(T):=\set{i:n_T(i)>0}=\set{d_T(v):v\in T},   
\end{align}
the set of degrees that
appear in $T$.
Note that $\pi_\bp(T)=0 \iff p_i=0 \text{ for some }i\in\cD(T)$.
In particular, if $\bnk$ and $\bp$ are as in \refCond{Condition1}, then
$\pip(T)=0$ if and only if $n_{\kappa}(i)=o(|\bnk|)$ for some $i\in\cD(T)$.

We first give a law of large numbers for the number
of fringe trees of a given type
in a random rooted plane tree with a given degree
statistic. 
The proofs of this and the following theorems are given in later sections.

\begin{theorem} \label{TheoLLN}
Let $\mathbf{n}_{\kappa}$, $\kk\ge1$, 
be some degree statistics that satisfy Condition
\ref{Condition1}, and let 
$\mathcal{T}_{\mathbf{n}_{\kappa}} \sim {\rm Unif}(\mathbb{T}_{\mathbf{n}_{\kappa}})$. 
For every fixed $T \in \mathbb{T}$,
as \kktoo:
\begin{enumerate}[label=\upshape(\roman*)]
\item (Annealed version)\quad 
$\displaystyle \mathbb{P}(\mathcal{T}^{\ran}_{\mathbf{n}_{\kappa}} = T) 
=
\frac{\E[N_{T}(\mathcal{T}_{\mathbf{n}_{\kappa}})]}{|\mathbf{n}_{\kappa}|}
\rightarrow \pi_{\mathbf{p}}(T)$. 
\label{Annealed}

\item (Quenched version)\quad 
$\displaystyle \mathbb{P}(\mathcal{T}^{\ran}_{\mathbf{n}_{\kappa}} 
= T\mid \Tnk) =
  \frac{N_{T}(\mathcal{T}_{\mathbf{n}_{\kappa}})}{|\mathbf{n}_{\kappa}|}
  \rightarrow \pi_{\mathbf{p}}(T)$ 
in probability. 
\label{Quenched}
\end{enumerate}
In other words,
the random fringe tree converges in distribution as $\kk\to\infty$:
\ref{Annealed} says 
$\cTran_{\bnk}\dto \cTp$,
or equivalently $\cL(\cTran_{\bnk})\to \cL(\cTp)$, 
and \ref{Quenched} is the conditional version
$\cL\bigpar{\cTran_{\bnk}\mid \Tnk}\pto \cL(\cTp)$. 
\end{theorem}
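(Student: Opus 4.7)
The plan is to prove (i) by a direct expected-value computation and (ii) by a second-moment / Chebyshev argument, both resting on the classical enumeration formula
\[
|\bbT_{\bn}| = \frac{(|\bn|-1)!}{\prod_{i \ge 0} n(i)!}
\]
(a consequence of the cycle lemma).

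For part (i), I would set up a bijection between pairs $(T_0, v)$ with $T_0 \in \bbT_{\bnk}$ and $T_{0,v} = T$, and pairs $(T'', w)$ where $w$ is a distinguished leaf of a plane tree $T''$. The forward map deletes the proper descendants of $v$ in $T_0$, so that $v$ becomes a leaf $w$; the inverse attaches a copy of $T$ at $w$. A short accounting shows $T'' \in \bbT_{\bn'}$ with $\bn' := \bnk - \bn_T + \mathbf{e}_0$, where $\mathbf{e}_0 = (1,0,0,\ldots)$ records the extra leaf at $w$. Summing over the $n'(0)$ leaves of each such $T''$ and applying the enumeration formula yields
\[
\frac{\E[N_T(\Tnk)]}{|\bnk|} = \frac{\prod_{i \ge 0} (n_\kappa(i))_{n_T(i)}}{(|\bnk|)_{|T|}},
\]
with $(x)_k := x(x-1)\cdots(x-k+1)$ the falling factorial. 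Under \refCond{Condition1}, for each $i$ the factor $(n_\kappa(i))_{n_T(i)}/|\bnk|^{n_T(i)}$ converges to $p_i^{n_T(i)}$ (correctly giving $0$ when $p_i = 0$ and $n_T(i) \ge 1$), while $(|\bnk|)_{|T|}/|\bnk|^{|T|} \to 1$; multiplying gives the limit $\prod_{i \ge 0} p_i^{n_T(i)} = \pi_\bp(T)$.

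For part (ii), I would estimate the second factorial moment $\E[N_T(N_T - 1)]$ by splitting ordered pairs $(v,w)$ of distinct vertices into two classes. In the \emph{nested} class, where one of $v, w$ is a descendant of the other, each vertex $v$ with $T_{0,v} = T$ yields at most $2(m(T)-1)$ such ordered partners $w$, where $m(T)$ is the (finite) number of fringe subtrees of $T$ isomorphic to $T$; hence the nested contribution is bounded deterministically by $C(T) \cdot N_T(\Tnk) \le C(T)\,|\bnk|$, and in particular is $o(|\bnk|^2)$ in expectation. In the \emph{disjoint} class, where $T_v \cap T_w = \emptyset$, applying the same bijection twice produces a tree in $\bbT_{\bnk - 2\bn_T + 2\mathbf{e}_0}$ equipped with a distinguished ordered pair of distinct leaves, and the same manipulation yields
\[
\frac{\E[N_T(N_T - 1)]}{|\bnk|^2} = \frac{\prod_{i \ge 0} (n_\kappa(i))_{2 n_T(i)}}{|\bnk|^2\,(|\bnk| - 1)_{2|T| - 2}} + o(1) \longrightarrow \pi_\bp(T)^2
\]
by the same asymptotic argument as in (i). Combined with part (i), this gives $\Var(N_T(\Tnk)/|\bnk|) \to 0$, and Chebyshev's inequality delivers the claimed convergence in probability.

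The main obstacle is the bookkeeping behind the second-moment bijection: correctly tracking the degree-statistic shift $-2\bn_T + 2\mathbf{e}_0$ (the ``$+2\mathbf{e}_0$'' rather than ``$+\mathbf{e}_0$'') and showing rigorously that nested pairs are asymptotically negligible. The bijective approach extends transparently to all higher factorial moments, giving $\E[(N_T(\Tnk))_k]/|\bnk|^k \to \pi_\bp(T)^k$ for every $k \ge 1$ — precisely the input that powers the factorial-moment method underlying the central limit theorem stated later in the paper.
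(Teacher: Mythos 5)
Your proposal is correct and takes essentially the same route as the paper: the exact formulas you obtain for $\E[N_T(\mathcal{T}_{\mathbf{n}_\kappa})]$ and for the second factorial moment coincide with \eqref{eq5} and \eqref{eq3''} (with $q=2$), your ``replace each marked copy of $T$ by a leaf'' bijection is precisely the reduced-tree argument of \refL{lemma4} (the paper's \refL{lemma1} phrases the same count via {\L}ukasiewicz paths and Vervaat's transformation), and both parts then follow by the same falling-factorial asymptotics and Chebyshev's inequality. One minor simplification: your nested class is in fact empty, since a proper fringe subtree of $T$ has strictly fewer vertices than $T$ and hence cannot equal $T$, so $m(T)=N_T(T)=1$ and your bound $2(m(T)-1)$ vanishes, which is why the paper's exact formula \eqref{eq3''} for $\E[(N_T)_2]$ has no correction term.
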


\begin{remark}
Similar results are known for several other models of random trees. 
In particular, a version of Theorem \ref{TheoLLN} was proved by Aldous
\cite{Aldous1991} for conditioned Galton--Watson trees with finite offspring
variance; this was extended to general simply generated trees in
\cite[Theorem 7.12]{Svante2012}. 
In those cases, the degree statistic is random,
but  Condition \ref{Condition1} holds in probability,
with a non-random  limiting probability distribution $\mathbf{p}$. 
We return to simply generated trees in  \refSS{SimplyTrees}.
Another standard example is family trees of Crump--Mode--Jagers branching
processes (which includes e.g.\ random recursive trees, binary search trees
and preferential attachment trees);
see e.g.\
\cite{Aldous1991} and \cite[Theorem 5.14]{SJ306}.
\end{remark}

Theorem \ref{TheoLLN} is thus a law of large numbers for the number
of fringe trees of a given type. 
In this work, we also study the fluctuations and prove a central limit theorem
for this number;
we furthermore show that this holds jointly for different types of fringe trees.

For a probability distribution $\mathbf{p} = (p_{i})_{i \geq 0} \in\ppi$ and $T, T^{\prime} \in \mathbb{T}$, let 
\begin{align} \label{eq12}
\eta_{\mathbf{p}}(T,T^{\prime}) \coloneqq (|T|-1)(|T^{\prime}|-1) - \sum_{i\geq 0}\frac{n_{T}(i)n_{T^{\prime}}(i)}{p_{i}} 
,\end{align}
where we interpret $0/0:=0$,
and
\begin{align} 
\gamma_{\mathbf{p}}(T, T) &  := \pi_{\mathbf{p}}(T) + \eta_{\mathbf{p}}(T,T)(\pi_{\mathbf{p}}(T))^{2}, \label{eq20}\\
\gamma_{\mathbf{p}}(T, T') & 
:= N_{T'}(T)  \pi_{\mathbf{p}}(T) +  N_{T}(T') \pi_{\mathbf{p}}(T') 
+\eta_{\mathbf{p}}(T,T') \pi_{\mathbf{p}}(T) \pi_{\mathbf{p}}(T'), 
\qquad T\neq T'. 
\label{eq21}
\end{align}
Note that $\etap(T,T')=-\infty$ if $p_i=0$ for some $i\in\cD(T)\cap\cD(T')$.
In this case, $\pip(T)=\pip(T')=0$, and we interpret $\infty\cdot0:=0$
in \eqref{eq20}--\eqref{eq21}; thus $\gamma_{\mathbf{p}}(T,T')$ is always finite.

\begin{theorem} \label{TheoCLT}
Let $\mathbf{n}_{\kappa}$, $\kk\ge1$, be some degree statistics that satisfy
Condition  \ref{Condition1} and let 
$\mathcal{T}_{\mathbf{n}_{\kappa}} \sim {\rm Unif}(\mathbb{T}_{\mathbf{n}_{\kappa}})$. 
For a fixed $m \geq 1$, let
$T_{1}, \dots, T_{m} \in \mathbb{T}$ be a fixed sequence of 
rooted plane trees.
Then, as $\kappa \rightarrow \infty$,
\begin{align} 
\E N_{T_{i}}(\mathcal{T}_{\mathbf{n}_{\kappa}})  & 
=  \pi_{\mathbf{p}}(T_{i})|\mathbf{n}_{\kappa}|  + o(|\mathbf{n}_{\kappa}|), \label{eq6}\\
{\rm Var}(N_{T_{i}}(\mathcal{T}_{\mathbf{n}_{\kappa}})) & 
=   \gamma_\bp(T_i,T_{i})|\mathbf{n}_{\kappa}| + o(|\mathbf{n}_{\kappa}|), \label{eq15} \\
{\rm Cov}\bigpar{N_{T_{i}}(\mathcal{T}_{\mathbf{n}_{\kappa}}),
  N_{T_{j}}(\mathcal{T}_{\mathbf{n}_{\kappa}})} & 
=   \gamma_\bp(T_i,T_j)|\mathbf{n}_{\kappa}| + o(|\mathbf{n}_{\kappa}|), 
\label{eq13}
\end{align}
for $1 \leq i,j \leq m$,
and
\begin{align} \label{eq7}
\left(\frac{ N_{T_{1}}(\mathcal{T}_{\mathbf{n}_{\kappa}}) -
  \E[N_{T_{1}}(\mathcal{T}_{\mathbf{n}_{\kappa}})]}{\sqrt{|\mathbf{n}_{\kappa}|}}, 
\dots, 
\frac{ N_{T_{m}}(\mathcal{T}_{\mathbf{n}_{\kappa}}) -
  \E[N_{T_{m}}(\mathcal{T}_{\mathbf{n}_{\kappa}})]}{\sqrt{|\mathbf{n}_{\kappa}|}}
  \right) 
\dto\N(0, \Gamma_{\mathbf{p}}),
\end{align}
\noindent where the covariance matrix 
$\Gamma_{\mathbf{p}} :=(\gamma_{\mathbf{p}}(T_{i}, T_{j}))_{i,j=1}^m$. 
Furthermore, in \eqref{eq7}, 
we can replace
$\E[N_{T_{i}}(\mathcal{T}_{\mathbf{n}_{\kappa}})]$ 
by $|\mathbf{n}_{\kappa}| \pi_{\mathbf{p}(\mathbf{n}_{\kappa})}(T_{i})$. 

If\/ $T\in\bbT$ with $\pip(T)>0$ and $|T|>1$, then $\gammap(T,T)>0$ and thus 
\eqref{eq15} and \eqref{eq7} (with $m=1$) show 
that $N_T(\Tnk)$ is asymptotically normal, with
\begin{align}\label{eq70}
\frac{ N_{T}(\mathcal{T}_{\mathbf{n}_{\kappa}}) -
  \E[N_{T}(\mathcal{T}_{\mathbf{n}_{\kappa}})]}
{\sqrt{\Var (N_T(\Tnk))}}
\dto \N(0,1), \qquad \kappa \rightarrow \infty.
\end{align}
\end{theorem}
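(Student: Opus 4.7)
The plan is to prove the joint CLT through the method of factorial moments combined with the multivariate extension of the Gao–Wormald asymptotic normality criterion announced in the abstract. The main input is a clean closed-form expression for the joint factorial moments $\E\bigl[\prod_i (N_{T_i}(\Tnk))_{k_i}\bigr]$, which I would derive by splitting the defining sum over ordered tuples $(v_{i,j})_{i\in[m],\,j\in[k_i]}$ of distinct vertices (with types $T_i$) according to whether the fringes $\cT_{v_{i,j}}$ are pairwise disjoint or partially nested. For the disjoint part, contracting each marked fringe to a leaf gives a bijection between trees in $\bbT_{\bnk}$ with labelled disjoint typed fringe occurrences, and trees in $\bbT_{\mathbf{m}}$ with labelled ordered tuples of $K = \sum_i k_i$ distinguishable typed leaves, where $\mathbf{m} := \bnk - \sum_i k_i \bn_{T_i} + K e_0$. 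Combined with the classical count $|\bbT_\bn| = (|\bn|-1)!/\prod_j n(j)!$ and the telescoping identity $(n)_b (n-b)_c = (n)_{b+c}$, this yields the explicit disjoint contribution
\begin{align*}
\frac{\prod_{j\geq 0} (n_\kk(j))_{\sum_i k_i n_{T_i}(j)}}{(|\bnk|-1)_{\sum_i k_i (|T_i|-1)}}.
\end{align*}
A non-disjoint pair $(v_{i,j}, v_{i',j'})$ with $\cT_{v_{i',j'}} \subsetneq \cT_{v_{i,j}}$ can occur only when $T_{i'}$ appears as a proper fringe subtree of $T_i$, in which case the inner vertex is confined to one of the $N_{T_{i'}}(T_i)$ allowed positions inside the outer fringe, losing one factor of $|\bnk|$ per nesting.

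Expanding $(n)_s = n^s\bigl(1 - \binom{s}{2}/n + O(n^{-2})\bigr)$ in the disjoint formula gives $\E[N_{T_i}] = |\bnk|\pip(T_i) + O(1)$, which proves \eqref{eq6} and justifies the replacement by $|\bnk|\pip(\bnk)(T_i)$ in \eqref{eq7} since the difference is $O(1) = o(\sqrt{|\bnk|})$. For \eqref{eq15}, since $T$ cannot appear as a proper fringe of itself, $\E[(N_T)_2]$ is purely the disjoint contribution with $K=2$, whose first-order expansion combined with $\Var(N_T) = \E[(N_T)_2] + \E[N_T] - \E[N_T]^2$ yields both the $\pip(T)|\bnk|$ piece (from $\E[N_T]$) and the $\etap(T,T)\pip(T)^2|\bnk|$ piece of $\gammap(T,T)$. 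The algebraic identity $\binom{a+b}{2} - \binom{a}{2} - \binom{b}{2} = ab$, applied to $a=n_T(j)$ and $b=n_{T'}(j)$, is precisely what makes the lower-order coefficient collapse to $\etap$. For \eqref{eq13} with $T \neq T'$, the two admissible nesting patterns (a copy of $T'$ inside $T$, or vice versa) contribute $N_{T'}(T)\pip(T)|\bnk|$ and $N_T(T')\pip(T')|\bnk|$ respectively, completing $\gammap(T,T')$.

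For the joint convergence \eqref{eq7} I would apply the multivariate Gao–Wormald theorem to $(N_{T_i}(\Tnk))_{i=1}^m$: one shows that for $(k_1,\dots,k_m)$ in a suitable growing range the joint factorial moments match those of a multivariate Gaussian vector with means $(\pip(T_i)|\bnk|)_i$ and covariance $|\bnk|\,\Gamma_\bp$ up to a negligible relative error. The required uniform asymptotics follow from the same disjoint formula and nesting analysis, now applied uniformly in the $(k_i)$, since at each order $K$ there are only finitely many possible nesting forests among the $T_i$'s. Finally, \eqref{eq70} needs $\gammap(T,T)>0$, which follows from $\gammap(T,T)=\pip(T)\bigl(1+\etap(T,T)\pip(T)\bigr)$ together with a direct lower bound on $\etap(T,T)\pip(T)$; alternatively, one can argue from the non-negativity of $\Var(N_T)$ at finite $\kk$ combined with \eqref{eq15} and a non-degeneracy check.

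The main obstacle is the formulation and proof of the multivariate Gao–Wormald theorem itself, together with the uniform control of the factorial moment asymptotics across the growing range of multi-indices. The univariate Gao–Wormald argument already involves delicate analytic estimates on ratios of large falling factorials; the multivariate extension must additionally handle arbitrarily complex nesting patterns among different tree types and bound the combined error term uniformly in $(k_1,\dots,k_m)$, which is the technically heaviest part of the argument.
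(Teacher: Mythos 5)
Your main line of attack---exact joint factorial moments via contracting marked fringe occurrences to leaves, splitting into disjoint (``free'') and nested (``bound'') occurrences, expanding falling factorials, and feeding the uniform asymptotics into a multivariate Gao--Wormald theorem---is exactly the paper's route (Lemmas \ref{lemma4} and \ref{LemC}, then \refT{TGW}). Two points, however, are genuine gaps. First, you implicitly assume $\pip(T_i)>0$ throughout: if $\pip(T_i)=0$ then $\mu_{i\kk}=|\bnk|\pipnk(T_i)=o(|\bnk|)$ and the Gao--Wormald hypothesis $\gs_{i\kk}\ll\mu_{i\kk}$ fails with $\gs_{i\kk}=|\bnk|^{1/2}$, so the moment-matching argument breaks down for those coordinates. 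The paper handles this by noting that $\pip(T_i)=0$ forces $\gammap(T_i,T_j)=0$ for all $j$, so those components converge to $0$ in probability by Chebyshev and can be split off before applying \refT{TGW}; you need some such reduction.

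Second, and more seriously, your treatment of the final assertion $\gammap(T,T)>0$ does not work. Writing $\gammap(T,T)=\pip(T)\bigl(1+\etap(T,T)\pip(T)\bigr)$, the Cauchy--Schwarz inequality gives $\sum_i n_T(i)^2/p_i\ge\bigl(\sum_i n_T(i)\bigr)^2=|T|^2$, so $\etap(T,T)\le(|T|-1)^2-|T|^2<0$; there is no ``direct lower bound'' showing $\etap(T,T)\pip(T)>-1$, and indeed the inequality is tight in degenerate situations (cf.\ the exceptional cases of \refT{TheoCLT2}). Your alternative---non-negativity of the finite-$\kk$ variance combined with \eqref{eq15}---only yields $\gammap(T,T)\ge0$; the whole difficulty is excluding equality. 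The paper's Lemma \ref{L+1} does this by a separate argument: assuming $\gammap(T,T)=0$, it uses positive semidefiniteness of the limiting covariance matrix (i.e.\ the already-proved \eqref{eq7} applied to enlarged families of trees) to force $\gammap(T,T')=0$ for every $T'$, then evaluates this for auxiliary trees $T_{d,k}$ built by attaching $k$ copies of $T$ and $d-k$ leaves to a new root, and derives a contradiction. Some argument of this strength is needed; as written, your proof of \eqref{eq70} is incomplete.
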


The case $|T|=1$ is trivial, with $N_T(\Tnk)=n_\kk(0)$ non-random.
Ignoring this case, \refT{TheoCLT} shows that $N_T(\Tnk)$ is asymptotically
normal when $\pip(T)>0$.
On the other hand, if $\pi_\bp(T)=0$, then also $\gamma_\bp(T,T)=0$, and the
theorems above
do not give precise information on the asymptotic distribution of
$N_T(\Tnk)$. 
In this case, the following theorems are more precise.

\begin{theorem}\label{TA}
  Let $T\in\bbT$ be a fixed tree.
Then, uniformly for all degree statistics $\bn=(n(i))_{i \geq 0}$,
\begin{align}\label{ta1}
  \E N_T(\cTn)&= |\bn| \pi_{\bpn}(T) + O(1),
\\\label{ta10}
  \Var N_T(\cTn)&= |\bn| \gamma_{\bpn}(T,T) + O(1).
\end{align}
More generally, if $T, T^{\prime}\in\bbT$, then
\begin{align}\label{ta20}
  \Cov\bigpar{N_{T}(\cTn),N_{T^{\prime}}(\cTn)}
= |\bn|\gamma_\bpn(T,T^{\prime})+O(1).
\end{align}
\end{theorem}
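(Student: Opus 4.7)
The plan is to derive exact combinatorial formulas for the first and second (factorial) moments of $N_T(\cTn)$ via bijections that contract fringe copies of $T$, and then compare them with the claimed asymptotics, controlling the error uniformly in $\bn$.

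Starting from the classical enumeration $|\bbT_\bn| = (|\bn|-1)!/\prod_{i\ge 0} n(i)!$ (from the cycle lemma), the basic bijection pairs a triple $(T_0, v)$ with $T_0 \in \bbT_\bn$ and $(T_0)_v = T$ against a pair $(T^\ast, v^\ast)$ consisting of a tree $T^\ast \in \bbT_{\bn - \bn_T + \mathbf{e}_0}$ together with a distinguished leaf $v^\ast$ (where $\mathbf{e}_0$ denotes the unit vector in coordinate $0$): contract the fringe copy $T_v$ into the single leaf $v^\ast$, and conversely graft $T$ at $v^\ast$. Since $T^\ast$ has $n(0) - n_T(0) + 1$ leaves, counting pairs both ways and dividing by $|\bbT_\bn|$ yields the closed form
\begin{align*}
\E N_T(\cTn) \;=\; \frac{\prod_{i\ge 0}(n(i))^{\underline{n_T(i)}}}{(|\bn|-1)^{\underline{|T|-1}}},
\end{align*}
where $x^{\underline{k}} := x(x-1)\cdots(x-k+1)$ is the falling factorial.

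For \eqref{ta1}, I compare this with $|\bn|\pi_\bpn(T) = \prod_i n(i)^{n_T(i)}/|\bn|^{|T|-1}$, splitting into regimes to obtain uniformity. If $n(i) < n_T(i)$ for some $i \in \cD(T)$, then $\E N_T = 0$, while the crude bound $n(i)^{n_T(i)}|\bn|^{1-n_T(i)}\le n_T(i)^{n_T(i)}$ forces $|\bn|\pi_\bpn(T) = O_T(1)$. Otherwise, the deterministic inequality $N_T(\cTn) \le \min_{i\in \cD(T)} n(i)/n_T(i)$, valid for $|T|\ge 2$ since distinct fringe copies of $T$ are vertex-disjoint, handles the regime where some $n(i)$ is bounded. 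In the remaining regime, where every $n(i)$ with $i\in \cD(T)$ is large (say $\ge 2|T|$), a logarithmic Taylor expansion via $\log(1-x)=-x+O(x^2)$ gives $\E N_T/(|\bn|\pi_\bpn(T)) = 1 + O_T(\sum_{i\in \cD(T)} 1/n(i) + 1/|\bn|)$; multiplying by $|\bn|\pi_\bpn(T)$ and using the elementary bound $|\bn|\pi_\bpn(T)/n(i) \le 1$ (which follows from $\prod_j n(j)^{n_T(j)} \le n(i)\,|\bn|^{|T|-1}$) keeps the error uniformly $O_T(1)$.

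The variance and covariance estimates \eqref{ta10}--\eqref{ta20} follow from analogous bijections applied to the second factorial moments. When $|T|\ge 2$, distinct fringe copies of $T$ are vertex-disjoint, so contracting two copies simultaneously bijects triples $(T_0, v, w)$ with $v\ne w$ and $T_v=T_w=T$ against pairs $(T^{\ast\ast}, (v^\ast, w^\ast))$ with $T^{\ast\ast} \in \bbT_{\bn-2\bn_T+2\mathbf{e}_0}$ and an ordered pair of distinct leaves, yielding
\begin{align*}
\E\bigl[N_T(N_T-1)\bigr] \;=\; \frac{\prod_i (n(i))^{\underline{2 n_T(i)}}}{(|\bn|-1)^{\underline{2|T|-2}}}.
\end{align*}
For $T\ne T'$, splitting pairs $(v, w)$ according to whether one vertex is a strict ancestor of the other or the two are disjoint, the two ancestor cases contribute $N_{T'}(T)\,\E N_T$ and $N_T(T')\,\E N_{T'}$, and the disjoint case is again handled by a two-vertex contraction. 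Substituting into $\Var N_T = \E[N_T(N_T-1)] + \E N_T - (\E N_T)^2$ and $\Cov(N_T, N_{T'}) = \E[N_T N_{T'}] - \E N_T\,\E N_{T'}$, the leading contributions of order $|\bn|^2$ cancel, and a second-order expansion of the ratios of falling factorials produces precisely the $\eta_\bpn$ correction appearing in $\gamma_\bpn$. The main obstacle throughout is uniformity: when some $n(i)$ is comparable to $n_T(i)$ the Taylor expansion fails, but in that regime $\E N_T$, $\Var N_T$, and $|\bn|\gamma_\bpn$ are each $O_T(1)$ by the same disjointness bound, so the $O(1)$ error is recovered from crude direct estimates rather than expansions.
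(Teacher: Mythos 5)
Your proposal is correct and follows essentially the same route as the paper: exact first and second (joint) factorial moments via a contraction bijection (the paper phrases the same bijection in terms of {\L}ukasiewicz paths in Lemmas \ref{lemma1}, \ref{lemma4} and \ref{LemC}, including the same ancestor/disjoint split for $T\neq T'$), then a uniform $\log(1-x)$ expansion of the falling-factorial ratios as in Lemma \ref{L0}, with the degenerate regime where some $n(i)$ with $i\in\cD(T)$ is bounded handled by the same disjointness bounds $N_T(\cTn)\le n(i)/n_T(i)$ and $|\bn|\pi_\bpn(T)\le n(i)$. The resulting formulas and cancellations match the paper's \eqref{ta4}--\eqref{ta13} exactly.
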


In view of \eqref{ta1}, we define, for any degree statistic $\bn$ and tree
$T\in\bbT$, 
\begin{align}\label{mu}
  \mu_\bn(T):=|\bn|\pipn(T)
=|\bn|\prod_{i\ge0} p_i(\bn)^{n_T(i)}
=|\bn|\prod_{i\in\cD(T)} p_i(\bn)^{n_T(i)}
.\end{align}
This is thus a convenient approximation of $\E N_T(\cTn)$.
We define also
\begin{align} \label{gw1}
\heta_{\mathbf{p}}(T,T^{\prime}) \coloneqq 
\xpar{\pip(T)\pip(T')}\qq\eta_\bp(T,T'),
\qquad \text{if } \, \pip(T),\pip(T')>0,
\end{align}
and extend this by continuity to the case $\pip(T)\pip(T')=0$;
this yields by \eqref{eq12} the general formula
\begin{align} \label{gw2}
\heta_{\mathbf{p}}(T,T^{\prime}) =
\xpar{\pip(T)\pip(T')}\qq(|T|-1)(|T^{\prime}|-1) - 
\sum_{i\geq 0}{n_{T}(i)n_{T^{\prime}}(i)}
\prod_{j \in\cD(T)\cup\cD(T')} p_j^{(n_T(j)+n_{T'}(j))/2-\gdx{i}{j}}
.\end{align}
We interpret again $0\cdot\infty:=0$; thus the sum in \eqref{gw2} is finite
also if  $p_i=0$ for some $i \in\cD(T)\cup\cD(T')$. 
In fact, \eqref{gw2} is a polynomial in $p_0\qq,p_1\qq,\dots$, and is thus
continuous in $\bp$ as asserted.

Similarly, we define
\begin{align} \label{gw3}
\hgamma_{\mathbf{p}}(T,T^{\prime}) \coloneqq 
\xpar{\pip(T)\pip(T')}\qqw\gamma_\bp(T,T'),
\qquad \text{if } \, \pip(T),\pip(T')>0,
\end{align}
and extend this by continuity, which 
by \eqref{eq20}--\eqref{eq21} and \eqref{gw1} yields
\begin{align} 
\hgamma_{\mathbf{p}}(T, T) &  := 1 + \heta_{\mathbf{p}}(T,T), \label{gw20}\\
\hgamma_{\mathbf{p}}(T, T') & 
:= N_{T'}(T) \prod_{i \in\cD(T)}p_i^{(n_i(T)-n_i(T'))/2} 
+  N_{T}(T') \prod_{i \in\cD(T')}p_i^{(n_i(T')-n_i(T))/2}
+\heta_{\mathbf{p}}(T,T'),
\qquad T\neq T'. 
\label{gw21}
\end{align}
Note that $N_{T'}(T)>0$ implies 
$\cD(T')\subseteq\cD(T)$ and
$n_T(i)\ge n_{T'}(i)$, for $i \geq 0$; thus \eqref{gw21} 
always yields a finite value (again interpreting $0\cdot\infty:=0$);
again, this is a polynomial in $p_0\qq,p_1\qq,\dots$, and thus
continuous in $\bp$.

\begin{theorem} \label{TheoCLT2}
Let $\mathbf{n}_{\kappa}$, $\kk\ge1$, be some degree statistics that satisfy
Condition  \ref{Condition1} and let 
$\mathcal{T}_{\mathbf{n}_{\kappa}} \sim {\rm Unif}(\mathbb{T}_{\mathbf{n}_{\kappa}})$. 
For fixed $m \geq 1$, let
$T_{1}, \dots, T_{m} \in \mathbb{T}$ be a fixed sequence of 
rooted plane trees such that, as $\kk\to\infty$,
$\muk(T_i):=|\bnk|\pi_\bpnk(T_i)\to\infty$ for each $1 \le i\le m$.
Then, 
\begin{align} \label{eq77}
\lrpar{\frac{ N_{T_{1}}(\Tnk) - \E[N_{T_{1}}(\Tnk)]}{\sqrt{\muk(T_1)}},
\dots, 
\frac{ N_{T_{m}}(\Tnk) - \E[N_{T_{m}}(\Tnk)]}{\sqrt{\muk(T_m)}}}
\dto
 {\rm N}(0, \hGamma_{\mathbf{p}}),  \hspace*{4mm} \text{as} \hspace*{2mm}
  \kappa \rightarrow \infty, 
\end{align}
\noindent where the covariance matrix 
$\hGamma_{\mathbf{p}} :=(\hgamma_{\mathbf{p}}(T_{i}, T_{j}))_{i,j=1}^m$. 
Furthermore, in \eqref{eq77}, 
we can replace
$\mathbb{E}[N_{T_{i}}(\mathcal{T}_{\mathbf{n}_{\kappa}})]$ 
by $\muk(T_i)$. 

Moreover, $\hgamma_\bp(T,T)>0$, and thus the asymptotic normality
\eqref{eq70} holds,
except in the following three exceptional cases:
\begin{enumerate}[label=\upshape(\roman*)]
  \item\label{exc1} $|T|=1$,
  \item\label{exc2}
$T$ is a path and $p_1=1$,
\item\label{exc3} 
$T$ is a star 
with a root of degree $d$ joined to $d$ leaves, 
and $p_0=1$.
  \end{enumerate}
\end{theorem}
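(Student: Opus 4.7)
The plan is to deduce mean and covariance asymptotics from \refT{TA}, upgrade them to a joint CLT using the same multivariate (Gao--Wormald-type) moment apparatus that the paper develops and uses for \refT{TheoCLT}, and then carry out a separate analysis of when $\hgamma_\bp(T,T)$ vanishes.

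For mean and covariance, \eqref{ta1} gives $\E N_{T_i}(\Tnk)=\muk(T_i)+O(1)$, so replacing each expectation by $\muk(T_i)$ in \eqref{eq77} shifts the coordinate by $O(1)/\sqrt{\muk(T_i)}=o(1)$ and is harmless. From \eqref{ta20},
\begin{align*}
\frac{\Cov\bigpar{N_{T_i}(\Tnk),N_{T_j}(\Tnk)}}{\sqrt{\muk(T_i)\muk(T_j)}}
=\frac{|\bnk|\gamma_\bpnk(T_i,T_j)+O(1)}{|\bnk|\bigpar{\pipnk(T_i)\pipnk(T_j)}^{1/2}}
=\hgamma_\bpnk(T_i,T_j)+o(1),
\end{align*}
by \eqref{gw3} and the assumption $\muk(T_i)\to\infty$; continuity of $\hgamma_\bp$ in $\bp$ (noted after \eqref{gw21}) then yields the limit $\hgamma_\bp(T_i,T_j)$, which is the claimed covariance.

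For the joint asymptotic normality I would invoke the multivariate extension of Gao--Wormald. The rescaling by $\sqrt{\muk(T_i)}$ inserts an extra factor $\pipnk(T_i)^{1/2}$ in each denominator, and this is precisely what keeps the centred-and-rescaled factorial moments of order unity even when $\pipnk(T_i)\to0$; the factorial-moment computations then proceed exactly as in the proof of \refT{TheoCLT}, with the assumption $\muk(T_i)\to\infty$ playing the role previously played by $|\bnk|\pip(T_i)\to\infty$. In particular, if every $\pi_\bp(T_i)>0$ the two normalisations differ only by constants and \refT{TheoCLT} already applies, so the essentially new content of \refT{TheoCLT2} is the treatment of indices with $\pi_\bp(T_i)=0$ but $\muk(T_i)\to\infty$.

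For the non-degeneracy claim, direct substitution into \eqref{gw2} and \eqref{gw20} verifies $\hgamma_\bp(T,T)=0$ in each of (i)--(iii): in every such case the term $\pi_\bp(T)(|T|-1)^2$ vanishes (because either $|T|=1$ or $\pi_\bp(T)=0$) and exactly one term of the sum in \eqref{gw2} survives and equals $1$---namely the $i=0$ term $p_0^{0}=1$ when $|T|=1$; the $i=0$ term $p_1^{\ell}=1$ for a path with $p_1=1$; and the $i=d$ term $p_0^{d}=1$ for $T=S_d$ with $p_0=1$---giving $\heta_\bp(T,T)=-1$ and hence $\hgamma_\bp(T,T)=0$. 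For the converse I would regard $\hgamma_\bp(T,T)$, continuously extended, as a polynomial in $p_0^{1/2},p_1^{1/2},\dots$ on the simplex $\ppi$ and show by case analysis that its only zeros there are the three listed degenerate configurations; an alternative and more transparent probabilistic route is to exhibit, for every other pair $(T,\bp)$, a local rearrangement of $\Tnk$ (a sibling swap or small rewiring of an internal edge) that flips $N_T(\Tnk)$ by $\pm1$ with probability of order $\muk(T)/|\bnk|$, forcing $\Var N_T(\Tnk)$ to be of order $\muk(T)$. This converse---matching the algebraic zeros of the polynomial to combinatorial rigidity of fringe-$T$ counts, and in particular handling the boundary $\pi_\bp(T)=0$ through the continuous extension of \eqref{gw20}--\eqref{gw21}---is the step I expect to be the main obstacle.
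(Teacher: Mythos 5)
Your mean/covariance computations and the overall strategy for \eqref{eq77} (rescale by $\sqrt{\muk(T_i)}$ and rerun the factorial-moment estimates through the multivariate Gao--Wormald theorem) match the paper's route. But two points are genuinely missing. First, in the CLT step you assert that the factorial-moment computations "proceed exactly as in the proof of Theorem \ref{TheoCLT}", yet the admissible range changes from $q_{i\kk}=O(|\bnk|^{1/2})$ to $q_{i\kk}=O(\muk(T_i)^{1/2})$, and these scales can differ between coordinates. The bound-tree factors $\sum_k \tau_{jk}q_{k\kk}/\mu_{j\kk}$ in \eqref{er1} are then only controlled via the observation that $\tau_{jk}>0$ forces $n_{T_j}(i)\le n_{T_k}(i)$ for all $i$, hence $\mu_{k\kk}\le\mu_{j\kk}$ and so $\sum_k\tau_{jk}q_{k\kk}=O(\mu_{j\kk}^{1/2})$ (the paper's \eqref{gw6}). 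Without this monotonicity the error terms in the second product of \eqref{er1} are not $O(1)$, so you should state and use it.

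Second, and more seriously, the converse half of the non-degeneracy claim is not proved: you verify $\hgamma_\bp(T,T)=0$ in cases \ref{exc1}--\ref{exc3} (correctly), but for the statement you must show $\hgamma_\bp(T,T)>0$ in \emph{all} other cases, and you explicitly leave this as "the main obstacle". Neither of your two proposed strategies is likely to close it as stated: a bare polynomial case analysis in $p_0^{1/2},p_1^{1/2},\dots$ is unwieldy when $\pip(T)>0$, and the "local rearrangement" heuristic is not a proof. The paper splits the argument. When $\pip(T)=0$, one fixes $i_0\in\cD(T)$ with $p_{i_0}=0$; then every term of the sum in \eqref{gw2} with $i\neq i_0$ contains the factor $p_{i_0}^{n_T(i_0)}=0$, so $\heta_\bp(T,T)=-n_T(i_0)^2\prod_{j}p_j^{n_T(j)-\gd_{i_0j}}$, which is $0$ unless $n_T(i_0)=1$, and in that case $\hgamma_\bp(T,T)=1-\prod_{j\neq i_0}p_j^{n_T(j)}$ vanishes only if $p_j=1$ for the single remaining $j\in\cD(T)$ --- forcing exactly the path/star configurations. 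When $\pip(T)>0$, positivity of $\gammap(T,T)$ (\refL{L+1}) is \emph{not} an algebraic triviality: the paper uses that $(\gammap(T_i,T_j))_{i,j}$ is a limiting covariance matrix (hence positive semidefinite, by the already-proved \refT{TheoCLT}), applies Cauchy--Schwarz to get $\gammap(T,T')=0$ for every $T'$, and then tests against the auxiliary trees $T_{d,k}$ with a root of degree $d$ carrying $k$ copies of $T$, deriving a contradiction from the resulting linear relations. You would need this (or an equivalent) argument to complete the claim.
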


The exceptional cases \ref{exc2} and \ref{exc3} 
are discussed further in \refE{Eexc}.

\begin{remark}
\refT{TheoCLT2} shows that
excluding the exceptional cases \ref{exc1}--\ref{exc3},
the condition $\mu_\bnk(T)\to\infty$, as $\kappa \rightarrow \infty$,  
is sufficient for asymptotic normality of $N_T(\Tnk)$.
This condition is also necessary, since otherwise (at least for a subsequence)
$\E N_T(\Tnk)=O(1)$ by \eqref{ta1}, and 
since $N_T(\Tnk)$ is integer-valued, 
it is easy to see that then it cannot converge to a non-degenerate normal
distribution for any normalization.
\end{remark}

\begin{problem}\label{ProbGamma}
In \refT{TheoCLT}, suppose 
that $T_1,\dots,T_m$ are distinct
with
$|T_i|>1$ and $\pip(T_i)>0$ for every $1 \leq i \leq m$. 
\refT{TheoCLT} says that $\gammap(T_i,T_i)>0$, for every $1 \leq i \leq m$.
Is the covariance matrix $\Gamma_\bp$ non-singular?
\end{problem}

In the case of critical conditioned Galton--Watson trees with finite offspring
variance,  
(joint) normal convergence of the subtree counts in analogy to \eqref{eq7}
was proved in  \cite[Corollary 1.8]{Svante2016} (together with convergence
of mean and variance).
Indeed,
\cite[Theorem 1.5]{Svante2016}  proved, more generally, asymptotic normality of
additive functionals that are defined via toll functions (under some
conditions); see \refSS{AdditiveFunct} for further discussion on additive functionals. 

\begin{remark}
Results on asymptotic normality for fringe tree counts have also been proved
earlier for several other classes of random trees.
For example, for binary search trees see
\cite{Devroye1}, 
\cite{Devroye2},
\cite{Chang},
\cite{FlajoletGM1997},
\cite{SJ296};
for random recursive trees see
\cite{FengMahmoud},
\cite{SJ296};
for increasing trees see \cite{Fuchs2012};
for $m$-ary search trees and preferential attachment trees see
\cite{SJ309};
for random tries see
\cite{SJ347}.
\end{remark}

Our approach relies on 
a multivariate version of the 
Gao--Wormald theorem \cite[Theorem 1]{Gao2004}; see Theorem \ref{TGW} in
Appendix \ref{AppendixA}.
The original  Gao--Wormald theorem \cite{Gao2004} 
provides a way to show asymptotic normality by analysing the behaviour of
sufficiently high factorial moments. 
(Typically, factorial moments are more convenient than standard moments
in combinatorics.) 
Our multivariate version \ref{TGW} extends this by
considering joint factorial moments.
In our
framework, this is very convenient since we can precisely compute the joint
factorial moments of the subtree counts in \eqref{eq2} for random trees
with given degree statistics.
(Another, closely related, multivariate version of the 
Gao--Wormald theorem has independently been shown recently by
Hitczenko and Wormald \cite{HitczenkoWormald};
see further \refApp{AppendixA}.)

The (one dimensional) Gao--Wormald theorem has been used before by Cai
and Devroye \cite{Cai2016} to study large fringe trees in critical
conditioned Galton--Watson trees with finite offspring variance. Indeed,
they considered fringe subtree counts of a sequence of trees instead of a
fixed tree. In particular, they showed that asymptotic normality still holds
in some regimes, while in others there is a Poisson limit. In a
forthcoming work, we will study the case of not fixed fringe trees in the
framework of random
trees with given degrees.



\subsection{Organization of the paper}
Some standard facts on coding trees by walks are recalled in \refS{Strees};
these facts are used in \refS{Smoments} to give 
exact formulas for factorial moments of $N_T(\cTn)$.
These formulas are then used in \refSs{SpfLLN+A}--\ref{SpfCLT}
to prove our main results. 

Applications to labelled trees with given vertex degrees, simply generated trees and additive functionals are given in 
\refSs{Slabelled}--\ref{AdditiveFunct}.

\refApp{AppendixA} contains a general statement and proof of the
multivariate version of the Gao--Wormald theorem that we use in our proof of
the main theorems.
\refApp{AppendixB} uses that theorem to give a new simple proof 
(in two cases)
of a known result on 
asymptotic normality of degree statistics in conditioned Galton--Watson trees
(\refT{TheoDegreeS}) that we use in the proof in \refS{SimplyTrees}. 

\subsection{Some notation}\label{SSnot}
In addition to the notation introduced above, we use the following standard
notation. 

We let $\mathbb{Z} \coloneqq \{\dots, -1, 0, 1, \dots
\}$, $\mathbb{N} \coloneqq \{1, 2, \dots \}$, $\mathbb{N}_{0} \coloneqq \{0,
1, 2, \dots \}$. 
We let $0$ denote also vectors and matrices with all elements $0$
(the dimension will be clear from the context).
We use standard $o$ and $O$ notation, for sequences and
functions of a real variable. Recall that $a_\kk=\Theta(b_\kk)$
means $a_\kk=O(b_\kk)$ and $b_\kk=O(a_\kk)$.

For two sequence of positive real numbers $(a_{n})_{n \geq 1}$ and $(b_{n})_{n \geq 1}$, we write $a_{n} \ll b_{n}$ or $b_{n} \gg a_{n}$ if and only if $a_{n}/b_{n} \rightarrow 0$ as $n \rightarrow \infty$.

$\indic{\cE}$ is the indicator function of an event $\cE$, and
$\gd_{ij}:=\indic{\set{i=j}}$ is Kronecker's delta.

For $x \in \mathbb{R}$ and $q \in \mathbb{N}_{0}$, we let $(x)_{q} \coloneqq
x(x-1)\cdots(x-q+1)$ denote the $q$th falling factorial of $x$. 
(Here $(x)_{0} \coloneqq 1$. 
Note that $(x)_{q} = 0$ whenever $x \in \mathbb{N}_{0}$ and $x-q+1\leq0$.)

We interpret $0/0=0$ and $0\cdot\infty=0$.

We use $\dto$ for convergence in distribution, 
and $\pto$ for convergence in probability, 
for a sequence of random variables in some metric space. 
Also, $\cL(X)$ denotes the distribution of $X$, and
$\stackrel{\rm d}{=}$ means equal in distribution. 
We write ${\rm N}(0,\Gamma)$ for the multivariate normal distribution
with mean vector $0$ and covariance matrix $\Gamma \coloneqq
(\gamma_{ij})_{i,j=1}^m$, for $m \in \mathbb{N}$. 
(This includes the case $\Gamma = 0$; in this case
$X\sim{\rm N}(0,\Gamma)$ means that $X=0\in\bbR^m$ a.s.)

Unspecified limits are as $\kktoo$.

\section{Trees and walks}\label{Strees}

For $k \in \mathbb{N}$, we view a sequence $x = (x(0), x(1), \dots, x(k)) \in \mathbb{Z}^{k+1}$ as a walk with steps (or increments) given by $\Delta x(i) \coloneqq x(i)-x(i-1)$, for $1 \leq i \leq k$. Define the set of all (discrete) bridges finishing in position $-1$ at time $k$, as
\begin{eqnarray}\label{tw1}
\mathbb{B}^{k} := \bigset{ (x(0), x(1), \dots, x(k)) \in \mathbb{Z}^{k+1}: \, x(0) = 0, \, \Delta x(i) \geq -1, \, \text{for} \, 1 \leq i \leq k, \, \text{and} \, x(k)=-1 }. 
\end{eqnarray}

\noindent For $1 \leq i \leq k$ and $x = (x(0), x(1), \dots, x(k)) \in
\mathbb{B}^{k}$, define $\omega_{i}(x)$ as the cyclic shift of $x$ by
$i$, that is, the sequence of length $k$ starting at 0
whose $j$-th increment is $\Delta
x(i+j)$ with $i+j$ interpreted  \text{mod} $k$. For any $x \in
\mathbb{B}^{k}$, let $\tau_{x}$ be the first time that $x$ hits its overall
minimum, i.e., $\min_{1 \leq i \leq k} x(i)$. 
The
(discrete form of) Vervaat's transformation
of $x$ is defined by $V(x) =
\omega_{\tau_{x}}(x)$; see \cite[Exercise 6.1.1]{Pitman2006} or
\cite{Takacs1962}. This transformation maps the set of bridges
$\mathbb{B}^{k}$ to the set of excursions of size $k$ finishing at $-1$:
\begin{eqnarray}
\mathbb{E}^{k} := \bigset{ (w(0), w(1), \dots, w(k)) \in \mathbb{Z}^{k+1}:
  \text{$w(0) = 0$,  $\Delta w(i) \geq -1$  for $1 \leq i \leq k$,
  and $w$ first hits $-1$ at time $k$}}. 
\end{eqnarray}
Let $\mathbf{n} = (n(i))_{i \geq 0}$ be a degree statistic with associated 
degree sequence $c(\mathbf{n}) = (c(i))_{i \geq 1}$, that is, the sequence
obtained by writing $n(0)$ zeros, $n(1)$ ones, and so on. Let $\sigma$ be a
uniformly random permutation of $\{1, \dots, |\mathbf{n}|\}$. Define the
bridge $\Wbn\in\bbB^{|\bn|}$ by letting 
\begin{eqnarray}
\Wbn(0) := 0 \hspace*{2mm} \text{and} \hspace*{2mm} \Wbn(j) := \sum_{i=1}^{j}(c(\sigma(i))-1), \hspace*{2mm} \text{for} \hspace*{2mm} 1 \leq j \leq |\mathbf{n}|. 
\end{eqnarray}
Note that $\Wbn(|\mathbf{n}|) = -1$.  $\Wbn$ is a discrete random process with exchangeable increments. The set of paths taken by $\Wbn$ is
\begin{eqnarray}
\mathbb{B}_{\mathbf{n}} := \bigset{ (x(0), x(1), \dots, x(|\mathbf{n}|)) \in \mathbb{B}^{|\mathbf{n}|}: |\{1 \leq j \leq |\mathbf{n}|: \Delta x(j) = i-1 \}| = n(i), \, \, \text{for every} \, \, i \geq 0 }. 
\end{eqnarray}
\noindent From the excursions in $\mathbb{E}^{|\mathbf{n}|}$, we consider those with fixed number of increments of given size, i.e., 
\begin{align}\label{tw5}
\mathbb{E}_{\mathbf{n}} 
:=\bbE^{|\bn|}\cap\bbB_\bn
= \bigset{ (w(0), w(1), \dots, w(|\mathbf{n}|)) \in \mathbb{E}^{|\mathbf{n}|}: |\{1 \leq j \leq |\mathbf{n}|: \Delta w(j) = i-1 \}| = n(i), \, \, \text{for every} \, \, i \geq 0 }. 
\end{align}
\noindent It is well known that there exists a bijection between
$\mathbb{E}_{\mathbf{n}}$ and $\mathbb{T}_{\mathbf{n}}$ 
(see \cite[Lemma 6.3]{Pitman2006}), and it is also well known that 
(see \cite[Exercise 6.2.1]{Pitman2006}) 
\begin{eqnarray}\label{sw4}
|\mathbb{T}_{\mathbf{n}}| = \frac{1}{|\mathbf{n}|} \binom{|\mathbf{n}|}{\mathbf{n}} = \frac{1}{|\mathbf{n}|} \frac{|\mathbf{n}|!}{\prod_{i \geq 0} n(i)!}.
\end{eqnarray}
\noindent It should be clear that bridges in $\mathbb{B}_{\mathbf{n}}$ are sent to excursions in $\mathbb{E}_{\mathbf{n}}$ by the Vervaat transformation. Moreover, for $w \in \mathbb{E}_{\mathbf{n}}$, the number of $x \in \mathbb{B}_{\mathbf{n}}$ such that $V(x) = w$ is exactly $|\mathbf{n}|$; see \cite[Corollary 15.4]{Svante2012} or \cite[Lemma 6.1]{Pitman2006}.

Let $u(1) \prec \cdots \prec u(|T|)$ be the sequence of vertices of $T \in
\mathbb{T}$ in depth-first order (also called pre-order); 
thus $u(1)$ is the root of $T$.  Set $d_{T}(i) 
= d_{T}(u(i))$, for $i = 1, \dots, |T|$, and call $(d_{T}(1), \dots,
d_{T}(|T|))$ the pre-order degree sequence of $T$. For $k \in \mathbb{N}$,
it is well-known that a sequence $(d(1), \dots, d(k)) \in
\mathbb{N}_{0}^{k}$ is the pre-order degree sequence of a tree $T \in
\mathbb{T}$ if and only if 
\begin{eqnarray} \label{eq1}
\sum_{i=1}^{j} d(i) \geq j, \hspace*{2mm} \text{for} \hspace*{2mm} 1 \leq j \leq k-1, \hspace*{2mm} \text{and} \hspace*{2mm} \sum_{i=1}^{k}d(i) = k-1;
\end{eqnarray}
see \cite[Lemma 15.2]{Svante2012}. Indeed, $T$ is uniquely determined by its
pre-order degree sequence. 
The depth first queue process (DFQP, or {\L}ukasiewicz path)
$W_{T} = (W_{T}(i), 0 \leq i \leq |T|)$ of a tree $T \in \mathbb{T}$,
associated to the depth-first ordering $u(1) \prec \dots \prec u(|T|)$ of
its vertices, is defined by $W_{T}(0) := 0$ and $W_{T}(i) := W_{T}(i-1)  +
d_{T}(i)-1$, for $1 \leq i \leq |T|$. Note that $\gD W_{T}(i)  =
d_{T}(i) -1 \geq -1$ for every $1 \leq i \leq |T|$, with equality if and
only if $u(i)$ is a leaf of $T$. In addition, $W_{T}(i) \geq 0$ for every $0
\leq i \leq |T|-1$, but $W_{T}(|T|) = -1$, 
i.e., $W_T\in\bbE^{|T|}$.

The next (well known)
proposition summaries some of the previous definitions and remarks.
\begin{proposition} \label{Pro1}
Let $\mathbf{n}$ be a degree statistic and let
$\mathcal{T}_{\mathbf{n}} \sim {\rm Unif}(\mathbb{T}_{\mathbf{n}})$. If\/
$W_{\mathcal{T}_{\mathbf{n}}}$ is the DFQP of $\mathcal{T}_{\mathbf{n}}$,
then $\mathbb{P}(W_{\mathcal{T}_{\mathbf{n}}} = w) =
\frac{1}{|\mathbb{T}_{\mathbf{n}}|}$, for $w \in \mathbb{E}_{\mathbf{n}}$. 
Moreover, if
$U$ is a uniform random variable on $\{1, \dots, |\mathbf{n}|\}$ independent
of $W_{\mathcal{T}_{\mathbf{n}}}$, then $(W_{\mathcal{T}_{\mathbf{n}}}, U)
\stackrel{\rm d}{=} (V(\Wbn), \tau_{\Wbn})$. 
\qed
\end{proposition}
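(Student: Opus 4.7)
The plan is to deduce the proposition directly from the two bijections recalled in the preceding paragraphs, namely the \L{}ukasiewicz encoding $T\mapsto W_T$ of plane trees by excursions and the Vervaat cyclic-shift transformation relating bridges to excursions. The first claim is essentially immediate: the map $T\mapsto W_T$ is a bijection from $\mathbb{T}_\bn$ onto $\mathbb{E}_\bn$, so pushing the uniform distribution on $\mathbb{T}_\bn$ through it yields the uniform distribution on $\mathbb{E}_\bn$. Since $|\mathbb{E}_\bn|=|\mathbb{T}_\bn|$, this gives $\mathbb{P}(W_{\cT_\bn}=w)=1/|\mathbb{T}_\bn|$ for every $w\in\mathbb{E}_\bn$.

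For the joint-distribution assertion I would proceed in three steps. First, I would verify that $\Wbn$ is uniformly distributed on $\mathbb{B}_\bn$. This is a direct calculation from the construction: under the uniform permutation $\sigma$, the increment sequence $(c(\sigma(1))-1,\dots,c(\sigma(|\bn|))-1)$ is a uniformly random rearrangement of the multiset containing $n(i)$ copies of $i-1$ for each $i\ge 0$; since each rearrangement corresponds to a distinct bridge in $\mathbb{B}_\bn$ and arises from exactly $\prod_i n(i)!$ permutations, every element of $\mathbb{B}_\bn$ is hit with probability $\prod_i n(i)!/|\bn|!=1/|\mathbb{B}_\bn|$. Second, I would show that the map $\Phi\colon x\mapsto (V(x),\tau_x)$ is a bijection from $\mathbb{B}_\bn$ onto $\mathbb{E}_\bn\times\{1,\dots,|\bn|\}$: injectivity is immediate from the identity $x=\omega_{-\tau_x}(V(x))$, while surjectivity amounts to the cited Vervaat fact that each $w\in\mathbb{E}_\bn$ has exactly $|\bn|$ preimages under $V$ in $\mathbb{B}_\bn$, which are precisely the $|\bn|$ bridges obtained by choosing $\tau\in\{1,\dots,|\bn|\}$. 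Combining these two facts, $(V(\Wbn),\tau_\Wbn)$ is uniformly distributed on $\mathbb{E}_\bn\times\{1,\dots,|\bn|\}$.

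To conclude, observe that by the first part $W_{\cT_\bn}$ is uniform on $\mathbb{E}_\bn$, and by hypothesis $U$ is independent of $W_{\cT_\bn}$ and uniform on $\{1,\dots,|\bn|\}$. Hence the pair $(W_{\cT_\bn},U)$ is uniform on $\mathbb{E}_\bn\times\{1,\dots,|\bn|\}$, and the two joint distributions coincide. The entire argument is essentially a matter of bookkeeping around cited combinatorial identities; the only small point requiring verification is that decorating the Vervaat transform with $\tau_x$ upgrades the many-to-one map into a genuine bijection, which I do not anticipate to present any serious obstacle.
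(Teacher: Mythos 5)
Your proof is correct and uses exactly the ingredients the paper assembles in the paragraphs preceding the proposition (the bijection $\mathbb{T}_{\mathbf{n}}\leftrightarrow\mathbb{E}_{\mathbf{n}}$, the uniformity of $\Wbn$ on $\mathbb{B}_{\mathbf{n}}$, and the fact that each excursion has exactly $|\mathbf{n}|$ Vervaat preimages); the paper itself omits the proof as well known. The one step you flag — that recording $\tau_x$ upgrades $V$ to a bijection onto $\mathbb{E}_{\mathbf{n}}\times\{1,\dots,|\mathbf{n}|\}$ — follows from your injectivity observation together with the cardinality identity $|\mathbb{B}_{\mathbf{n}}|=|\mathbf{n}|\cdot|\mathbb{E}_{\mathbf{n}}|$, so there is no gap.
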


Note, in particular, that $\tau_{\Wbn}$ is uniform on $\{1, \dots, |\mathbf{n}|\}$ and independent of $V(\Wbn)$.

\section{Moment computations} \label{Smoments}

In this section, we compute the moments of the number of fringe subtrees of
a uniformly random tree $\mathcal{T}_{\mathbf{n}}$ of
$\mathbb{T}_{\mathbf{n}}$, for a degree statistic $\mathbf{n}$. As a
warm-up, we use some of the ideas used
in \cite{Svante2016} to compute the first moment. 

Recall that $T \in \mathbb{T}$ is uniquely described by its pre-order degree sequence $(d_{T}(1), \dots, d_{T}(|T|))$. Then, for $i = 1, \dots, |T|$, the fringe subtree $T_{u(i)}$ has pre-order degree sequence $(d_{T}(i), \dots, d_{T}(i+k-1))$, where $1 \leq k \leq |T| -i+1$ is the unique index such that $(d_{T}(i), \dots, d_{T}(i+k-1))$ is a pre-order degree sequence of a tree, i.e., it satisfies (\ref{eq1}). Thus, for  $T, T^{\prime} \in \mathbb{T}$, we can write (\ref{eq2}) as
\begin{align}  \label{eq3}
N_{T^{\prime}}(T) =
              \sum_{i=1}^{|T| - |T^{\prime}|+1} \mathbf{1}_{\{ d_{T}(i) =
                              d_{T^{\prime}}(1), \dots,
                              d_{T}(i+|T^{\prime}|-1)) =
                              d_{T^{\prime}}(|T^{\prime}|)\}} 
,\end{align}
where the sum is interpreted as $0$ when $|T'|>|T|$.

\begin{lemma} \label{lemma1}
Let $\mathbf{n}$ be a degree statistic and let 
$\mathcal{T}_{\mathbf{n}} \sim {\rm Unif}(\mathbb{T}_{\mathbf{n}})$. 
For $T \in \mathbb{T}$ such that $|\mathbf{n}| \geq |T|$,
\begin{align} \label{eq5}
\E[N_{T}(\mathcal{T}_{\mathbf{n}})] =  \frac{|\mathbf{n}|}{(|\mathbf{n}|)_{|T|}} \prod_{i\geq 0} (n(i))_{n_{T}(i)}. 
\end{align} 
\end{lemma}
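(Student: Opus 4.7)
The strategy is to rewrite the non-cyclic count $N_T(\cTn)$ as a cyclic count of increment matches in the depth-first queue process, transfer it to the bridge $\Wbn$ using Proposition~\ref{Pro1}, and then exploit exchangeability of the bridge's increments to compute the probability explicitly.

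First I will use \eqref{eq3} to write
\[
N_T(\cTn) = \sum_{i=1}^{|\bn|-|T|+1} \indic{M_i},
\]
where $M_i$ is the event $\{\Delta W_{\cTn}(i+j-1) = d_T(j)-1 \text{ for } j=1,\ldots,|T|\}$. The first step is to extend this sum to the cyclic analogue $\sum_{i=1}^{|\bn|} \indic{M_i^{\mathrm{cyc}}}$ (with positions read modulo $|\bn|$) by showing that no wrap-around match can occur in an excursion. Suppose such a match existed at position $j > |\bn|-|T|+1$, and set $s := |\bn|-j+1 \in \{1,\ldots,|T|-1\}$. Then the partial sum of the first $s$ matched increments equals $W_{\cTn}(|\bn|) - W_{\cTn}(j-1) = -1 - W_{\cTn}(j-1) \leq -1$, whereas the pre-order condition \eqref{eq1} forces $\sum_{k=1}^{s}(d_T(k)-1) \geq 0$ whenever $s < |T|$, a contradiction.

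Because the cyclic count is invariant under cyclic shifts of the walk, and $W_{\cTn} \eqd V(\Wbn)$ is a cyclic shift of $\Wbn$ by Proposition~\ref{Pro1}, the cyclic count computed on $W_{\cTn}$ has the same distribution as the cyclic count computed on $\Wbn$. The increments of $\Wbn$ form a uniformly random permutation of the degree sequence $c(\bn)$, so by exchangeability and cyclic symmetry the expected cyclic count equals $|\bn| \cdot \P(\Delta\Wbn(j) = d_T(j)-1 \text{ for } j=1,\ldots,|T|)$. A standard count in the uniform permutation model evaluates this probability to $\prod_{i\geq 0}(n(i))_{n_T(i)}/(|\bn|)_{|T|}$, which gives \eqref{eq5}.

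The main obstacle I anticipate is the wrap-around exclusion, which hinges on combining the pre-order constraint \eqref{eq1} with the non-negativity of an excursion's positions, and requires careful index bookkeeping across the wrap point.
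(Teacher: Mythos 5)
Your proof is correct, but it takes a genuinely different route from the paper's. The paper argues bijectively: for each starting position $i$ it considers the set $\mathbb{A}_{i,T}$ of excursions in $\mathbb{E}_{\mathbf{n}}$ carrying a copy of $T$'s pre-order degree sequence at position $i$, collapses that copy to a single marked leaf, and identifies $\sum_i|\mathbb{A}_{i,T}|$ with $|\mathbb{T}_{\tilde{\mathbf{n}}}|\cdot\tilde n(0)$ for the reduced degree statistic $\tilde{\mathbf{n}}$; the formula \eqref{eq5} then drops out of the explicit count \eqref{sw4}. You instead cyclify the indicator sum, rule out wrap-around matches via \eqref{eq1} together with non-negativity of the excursion, transfer to the exchangeable bridge $\Wbn$ through the Vervaat transform of Proposition~\ref{Pro1}, and finish with a permutation count. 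Your wrap-around exclusion is sound (the partial sum $-1-W_{\mathcal{T}_{\mathbf{n}}}(j-1)\le-1$ versus $\sum_{k=1}^{s}(d_T(k)-1)\ge0$ for $s<|T|$ is exactly the right contradiction), and the final probability $\prod_{i\ge0}(n(i))_{n_T(i)}/(|\mathbf{n}|)_{|T|}$ is computed correctly. The trade-off: your argument leans on the walk machinery of Section~\ref{Strees} and is in the spirit of the exchangeability proofs for conditioned Galton--Watson trees, whereas the paper's collapse-to-a-leaf bijection is the one that extends most directly to the joint factorial moments of Lemma~\ref{lemma4} (marking several disjoint occurrences and reducing each to a leaf), which is what the rest of the paper needs; to push your route that far you would have to handle tuples of disjoint cyclic occurrences, which is doable but no simpler.
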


\begin{proof}
If  $ n_{T}(i) > n(i)$ for some $i \geq 0$, then 
both sides of \eqref{eq5} are $0$. 
Assume therefore that $n_{T}(i) \leq n(i)$ for all $i \geq 0$. 
Then, $|T| \leq |\mathcal{T}_{\mathbf{n}}| =|\mathbf{n}|$. 
Let $W_{\mathcal{T}_{\mathbf{n}}}$ be the DFQP of $\mathcal{T}_{\mathbf{n}}$ and $(d_{\mathcal{T}_{\mathbf{n}}}(1), \dots, d_{\mathcal{T}_{\mathbf{n}}}(|\mathbf{n}|))$ its pre-order degree sequence. Note that $d_{\mathcal{T}_{\mathbf{n}}}(i) = \Delta W_{\mathcal{T}_{\mathbf{n}}}(i)+1$, for $i = 1, \dots, |\mathbf{n}|$. Let $(d_{T}(1), \dots, d_{T}(|T|))$ be the pre-order degree sequence of $T$. Hence, by (\ref{eq3}),
\begin{eqnarray} \label{eq8}
N_{T}(\mathcal{T}_{\mathbf{n}}) = \sum_{i=1}^{|\mathbf{n}| - |T|+1} \mathbf{1}_{\{ \Delta W_{\mathcal{T}_{\mathbf{n}}}(i)= d_{T}(1)-1, \dots, \Delta W_{\mathcal{T}_{\mathbf{n}}}(i+|T|-1) = d_{T}(|T|)-1\}}.
\end{eqnarray}

For $1 \leq i \leq |\mathbf{n}| - |T|+1$ and $(y(1) \dots, y(i-1), y(i + |T|), \dots, y(|\mathbf{n}|)) \in \mathbb{Z}^{|\mathbf{n}| -|T|}$, define the walk 
\begin{eqnarray}
w^{y}_{i,T}(0) = 0 \hspace*{2mm} \text{and} \hspace*{2mm} w^{y}_{i,T}(j) = \sum_{r=1}^{j}(y(r)-1), \hspace*{2mm} \text{for} \hspace*{2mm} 1 \leq j \leq |\mathbf{n}|, 
\end{eqnarray}

\noindent where $y(i) = d_{T}(1), \dots, y(i+|T|-1) =d_{T}(|T|)$. 
In particular, $\Delta w^{y}_{i,T}(i+j-1) = d_{T}(j)-1$, for 
$j = 1, \dots, |T|$. We then consider the set of admissible excursions
obtained in this way:
\begin{eqnarray}\label{eq89}
\mathbb{A}_{i,T} = \{w^{y}_{i,T}: (y(1) \dots, y(i-1), y(i + |T|), \dots,
  y(|\mathbf{n}|)) \in \mathbb{Z}^{|\mathbf{n}| -|T|} 
\text{ and } w^{y}_{i,T} \in \mathbb{E}_{\mathbf{n}}\},
\end{eqnarray}
\noindent i.e., $\mathbb{A}_{i,T}$ is the set of excursions in
$\mathbb{E}_{\mathbf{n}}$ that code trees in $\mathbb{T}_{\mathbf{n}}$ with
a fringe subtree with pre-order degree sequence $(d_{T}(1), \dots,
d_{T}(|T|))$ that is rooted at its $i$-th vertex in depth-first order. 
Let $\tilde{\mathbf{n}} = (\tilde{n}(i))_{i \geq 0}$ be given by
$\tilde{n}(0) = n(0) - n_{T}(0)+1$ and $\tilde{n}(i) = n(i) - n_{T}(i)$ for
$i \geq 1$. 
If we instead of inserting the degree sequence of $T$ as above, insert only
$y(i)=0$ (corresponding to a leaf), and then relabel $y(j+|T|)$ as $y(j+1)$
for $i\le j\le |\bn|-|T|$, 
we obtain a bijection between $\mathbb{A}_{i,T}$ and the excursions in
$\bbE_{\tilde{\bn}}$ that correspond to a tree with a leaf as its $i$-th vertex.
Thus,
due to the bijection between $\mathbb{E}_{\tilde{\mathbf{n}}}$ and
$\mathbb{T}_{\tilde{\mathbf{n}}}$, 
we see that  
\begin{eqnarray} \label{eq9}
\sum_{i=1}^{|\mathbf{n}| - |T|+1} |\mathbb{A}_{i,T}| 
= |\mathbb{T}_{\tilde{\mathbf{n}}}|\cdot \tilde{n}(0).
\end{eqnarray} 
By Proposition \ref{Pro1} and (\ref{eq8})--\eqref{eq9}, 
this yields
\begin{eqnarray}
\E[N_{T}(\mathcal{T}_{\mathbf{n}})] 
= \sum_{i=1}^{|\mathbf{n}| - |T|+1} \frac{|\mathbb{A}_{i,T}|}{|\mathbb{T}_{\mathbf{n}}|} 
=\frac{|\mathbb{T}_{\tilde{\mathbf{n}}}|\cdot\tilde{n}(0)}{|\mathbb{T}_{\mathbf{n}}|}
, 
\end{eqnarray}
and the result \eqref{eq5} follows by \eqref{sw4}.
\end{proof}
 
Lemma \ref{lemma1} can be generalized to joint factorial moments of the
random variables $N_{T_{1}}(\mathcal{T}_{\mathbf{n}}), \dots,
N_{T_{m}}(\mathcal{T}_{\mathbf{n}})$, for $m \geq 1$ and a sequence of
distinct
rooted plane trees $T_{1}, \dots, T_{m} \in \mathbb{T}$. Before that, we need to introduce some notation. For $1 \leq i,j \leq m$, let
\begin{align}\label{ea3}
  \tau_{ij} \coloneqq N_{T_i}(T_j) \mathbf{1}_{\{ i \neq j \}}
\end{align}
\noindent be the number of proper fringe subtrees of $T_j$ that are equal to $T_i$. 
(Note that many of these terms are $0$. In particular, if we order $T_1, \dots, T_m$
according to their sizes, the matrix $(\tau_{ij})_{i,j=1}^{m} $ 
is strictly triangular.) 

For $q_{1}, \dots, q_{m} \in \mathbb{N}_{0}$, note that the product
  $(N_{T_{1}}(\mathcal{T}_{\mathbf{n}}))_{q_{1}} \cdots
  (N_{T_{m}}(\mathcal{T}_{\mathbf{n}}))_{q_{m}}$ is the number of sequences
  of $q \coloneqq q_1+\dots+q_m$ distinct fringe subtrees of
  $\mathcal{T}_{\mathbf{n}}$, where the first $q_1$ are copies of $T_1$, the
  next $q_2$ are copies of $T_2$, and so on.  
Given such a sequence of fringe subtrees, we say that these fringe subtrees are \emph{marked}. Furthermore, for each such sequence of marked fringe subtrees of $\mathcal{T}_{\mathbf{n}}$, say that a tree in the sequence is \emph{bound} if it is a fringe subtree of another tree in the sequence; otherwise it is \emph{free}. Note that the free trees are disjoint. Furthermore, each bound tree in the sequence is a fringe subtree of exactly one free tree. For a sequence $b=(b_{1}, \dots, b_{m}) \in \mathbb{N}_{0}^{m}$, let $S_{b}(\mathcal{T}_{\mathbf{n}})$ be the number of such sequences of $q$ fringe trees such that exactly $b_{i}$ of the fringe trees $T_i$ are bound, for $1 \leq i \leq m$. We thus have
\begin{align}\label{ea2}
\E\bigsqpar{(N_{T_{1}}(\mathcal{T}_{\mathbf{n}}))_{q_{1}} \dotsm
  (N_{T_{m}}(\mathcal{T}_{\mathbf{n}}))_{q_{m}}} 
= \sum_{b\in\bbNo^m}\E[S_{b}(\mathcal{T}_{\mathbf{n}})].
\end{align}
The sum is really only over $b=(b_{1}, \dots, b_{m}) \in \mathbb{N}_{0}^{m}$ such
  that $0 \leq b_{i} \leq q_{i}$ for $1 \leq i \leq m$, since otherwise $S_b(\cTn)=0$.
This sum can be computed by the following lemma.

\begin{lemma} \label{lemma4}
Let $\mathbf{n}$ be a degree statistic and let $\mathcal{T}_{\mathbf{n}}
\sim {\rm Unif}(\mathbb{T}_{\mathbf{n}})$. For $m \geq 1$ and $q_{1}, \dots,
q_{m} \in \mathbb{N}$, let $T_{1}, \dots, T_{m} \in \mathbb{T}$ be a
sequence of distinct
rooted plane trees such that $|\mathbf{n}| \geq \sum_{i=1}^{m}(q_{i} |T_{i}| -q_{i})+1$. Then
\begin{align}\label{ea6}
\E[S_{b}(\mathcal{T}_{\mathbf{n}})] = \frac{|\mathbf{n}|}{(|\mathbf{n}|)_{1+\sum_{j=1}^{m} (q_{j}-b_{j})(|T_{j}|-1)}} \prod_{i \geq 0}(n(i))_{\sum_{j=1}^{m} (q_{j}-b_{j})n_{T_{j}}(i)} \prod_{j=1}^{m} \frac{(q_j)_{b_j} \left( \sum_{k=1}^{m}(q_k-b_k)\tau_{jk} \right)_{b_j} }{b_{j}!}, 
\end{align}
for every $b=(b_{1}, \dots, b_{m}) \in \mathbb{N}_{0}^{m}$ such that $0 \leq
b_{i} \leq q_{i}$, for $1 \leq i \leq m$.
\end{lemma}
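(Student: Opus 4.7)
I would extend the substitution/bijection argument from \refL{lemma1} by splitting each configuration counted by $S_b(\cTn)$ into its \emph{free} fringe subtrees (pairwise disjoint) and its \emph{bound} fringe subtrees (each a proper fringe subtree of one of the free ones). Fix $b=(b_1,\dots,b_m)$ with $0\le b_j\le q_j$, set $q':=\sum_j(q_j-b_j)$, and, for $T\in\bbT_\bn$, write $F_{\bar b}(T)$ for the number of ordered sequences of $q'$ pairwise-disjoint fringe subtrees of $T$ whose first $q_1-b_1$ entries equal $T_1$, next $q_2-b_2$ entries equal $T_2$, and so on. The first step is the deterministic factorisation
\begin{equation}\label{plan:decomp}
S_b(T)\;=\;F_{\bar b}(T)\cdot\prod_{j=1}^m\binom{q_j}{b_j}\cdot\prod_{j=1}^m\Bigpar{\sum_{k=1}^m(q_k-b_k)\tau_{jk}}_{b_j},
\end{equation}
which is obtained as follows. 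Choose the free configuration ($F_{\bar b}(T)$ ways); pick which $b_j$ of the $q_j$ type-$T_j$ slots of the sequence are to hold bound trees ($\binom{q_j}{b_j}$ ways); and for each $j$, select an ordered $b_j$-tuple of distinct (free tree, internal vertex) pairs such that the fringe subtree at that vertex equals $T_j$. By distinctness of the $T_i$'s one has $\tau_{jj}=0$, which correctly excludes the free root itself, so every bound tree is automatically a proper fringe subtree, and the pool for type $T_j$ has size exactly $\sum_k(q_k-b_k)\tau_{jk}$. Choices across different values of $j$ are automatically compatible because a vertex determines its fringe subtree uniquely.

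The second step is to compute $\E F_{\bar b}(\cTn)$ by a direct bijection. Let $\tilde\bn$ be the degree statistic defined by $\tilde n(i):=n(i)-\sum_j(q_j-b_j)n_{T_j}(i)+q'\gdx{i}{0}$, so that $|\tilde\bn|=|\bn|-\sum_j(q_j-b_j)(|T_j|-1)\ge1$ under the hypothesis on $|\bn|$. Given any $T'\in\bbT_{\tilde\bn}$ together with an ordered $q'$-tuple $(\ell_1,\dots,\ell_{q'})$ of distinct leaves of $T'$, I would replace $\ell_1,\dots,\ell_{q_1-b_1}$ by disjoint copies of $T_1$, the next $q_2-b_2$ leaves by copies of $T_2$, and so on. This produces an element of $\bbT_\bn$ equipped with an ordered family of disjoint free fringe subtrees of the prescribed types, and the operation is reversible by contracting each free fringe subtree back to its root. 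Summing over $T\in\bbT_\bn$ therefore yields
\begin{equation}\label{plan:sum}
\sum_{T\in\bbT_\bn}F_{\bar b}(T)\;=\;|\bbT_{\tilde\bn}|\,(\tilde n(0))_{q'}.
\end{equation}

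Dividing \eqref{plan:sum} by $|\bbT_\bn|$ and applying \eqref{sw4} to both cardinalities, the two identities
\[
\frac{(|\tilde\bn|-1)!}{(|\bn|-1)!}\;=\;\frac{|\bn|}{(|\bn|)_{1+\sum_j(q_j-b_j)(|T_j|-1)}},\qquad
(\tilde n(0))_{q'}\prod_{i\ge0}\frac{n(i)!}{\tilde n(i)!}\;=\;\prod_{i\ge0}(n(i))_{\sum_j(q_j-b_j)n_{T_j}(i)}
\]
(for the second, one separates the $i=0$ factor using $\tilde n(0)-q'=n(0)-\sum_j(q_j-b_j)n_{T_j}(0)$) rewrite $\E F_{\bar b}(\cTn)$ in exactly the shape prescribed by \eqref{ea6}. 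Substituting back into \eqref{plan:decomp} and using $(q_j)_{b_j}=b_j!\binom{q_j}{b_j}$ then yields the claim. The main obstacle is the correctness of \eqref{plan:decomp}: one must verify that the free configuration together with the $b_j$-tuples of (free tree, vertex) pairs uniquely determines the sequence counted by $S_b(T)$, and that bound trees never collide with free roots or with one another; both follow from pairwise distinctness of $T_1,\dots,T_m$ (so that $\tau_{jj}=0$ and a vertex of a free $T_k$ can host a fringe subtree equal to $T_j$ for at most one $j$) and from the identification of a fringe subtree with the vertex at which it is rooted. Once this combinatorial decomposition is in hand, the remaining calculation is a routine manipulation of falling factorials.
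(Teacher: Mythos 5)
Your proposal is correct and takes essentially the same route as the paper's own proof: the same free/bound decomposition with the factors $\binom{q_j}{b_j}$ and $\bigl(\sum_k(q_k-b_k)\tau_{jk}\bigr)_{b_j}$ for placing the bound trees, and the same leaf-contraction bijection (generalizing Lemma \ref{lemma1}) to compute the expected number of free configurations. The falling-factorial manipulations you indicate match \eqref{ea1}--\eqref{ea7} exactly.
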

\begin{proof}
If  $\sum_{j=1}^{m}(q_{j}-b_{j})n_{T_{j}}(i)> n(i)$ for some $i \geq 0$, then 
both sides of \eqref{ea6} are $0$. We may thus assume that
$\sum_{j=1}^{m}(q_{j}-b_{j})n_{T_{j}}(i)\leq n(i)$ for all $i \geq 0$.

First, let us consider the case when all fringe trees are free, that is, 
the case $b =0= (0,\dots, 0) \in \mathbb{N}_{0}^{m}$. Replace each marked fringe subtree in $\mathcal{T}_{\mathbf{n}}$ by a single leaf; moreover, mark this leaf and order all marked leaves into a sequence, corresponding to the order of the fringe subtrees. This yields another tree $\Ttn$, which we call a \emph{reduced tree}, with a sequence of $q$ marked leaves. Since $\mathcal{T}_{\mathbf{n}}$ has $n(i)$ vertices of degree $i$, for $i \geq 0$, and we have replaced $q_j$ copies of $T_j$ by leaves, the degree statistic $\tilde{\mathbf{n}} = (\tilde{n}(i))_{ i \geq 0}$ of $\Ttn$ is given by
\begin{align}  \label{ea1}
\tilde{n}(i) \coloneqq 
  \begin{cases}
             n(i) - \sum_{j=1}^{m} q_{j}n_{T_{j}}(i),  & i \geq 1,\\
             n(0) - \sum_{j=1}^{m} q_{j}n_{T_{j}}(0) + \sum_{j=1}^{m}q_{j},  & i=0,
  \end{cases}
\end{align}
\noindent and has size
\begin{align}\label{ea8}
  |\tilde{\mathbf{n}}| \coloneqq \sum_{i \geq 0} \tilde{n}(i)  = |\mathbf{n}| - \sum_{j=1}^{m}q_{j}(|T_{j}|-1).
\end{align}
\noindent There is a one-to-one correspondence between trees in $\mathbb{T}_{\mathbf{n}}$ with
a sequence of marked fringe subtrees as above, and reduced trees with the degree statistic \eqref{ea1} and a sequence of $q$ marked leaves. If we ignore the marks, the number of possible reduced trees is given by \eqref{sw4} with the degree statistic $\tilde{\mathbf{n}}$ in \eqref{ea1}. In each unmarked reduced tree, the number of ways to choose sequences of marked leaves is $(\tilde{n}(0))_{q_1+\dots+q_m}$. Thus, the number of trees in $\mathbb{T}_{\mathbf{n}}$ with marked sequences of free fringe subtrees is the product of these numbers, i.e.,
\begin{align} \label{ea7}
\frac{(|\tilde{\mathbf{n}}|-1)!}{\prod_{i \geq 0}\tilde{n}(i)!} (\tilde{n}(0))_{\sum_{j=1}^{m}q_{j}} = \frac{(|\tilde{\mathbf{n}}|-1)!}{\prod_{i \geq 0}(n(i) - \sum_{j=1}^{m} q_{j}n_{T_{j}}(i))!}.
\end{align}
\noindent By dividing with $|\mathbb{T}_{\mathbf{n}}|$, which is given by \eqref{sw4}, and using \eqref{ea8}, we find
\begin{align} \label{ea4}
\E[S_{0}(\mathcal{T}_{\mathbf{n}})] = \frac{1}{(|\mathbf{n}|-1)_{\sum_{j=1}^{m} q_{j}(|T_{j}|-1)}} \prod_{i \geq 0}(n(i))_{\sum_{j=1}^{m} q_{j}n_{T_{j}}(i)}.
\end{align}

Now consider the general case with a sequence $b=(b_{1}, \dots, b_{m})$ telling the number of bound fringe subtrees. There are thus $q_j-b_j$ free trees of type $T_j$. The number of ways to choose the positions of the bound trees in the sequences of fringe trees is $\prod_{j=1}^{m}\binom{q_j}{b_j}$, and for each choice of free trees, there are $\sum_{k=1}^{m} (q_k-b_k) \tau_{jk}$ possible bound trees of type $T_j$; thus the number of choices of the bound trees is
\begin{align}\label{ea5}
\prod_{j=1}^{m} \frac{(q_j)_{b_j} \left( \sum_{k=1}^{m}(q_k-b_k)\tau_{jk} \right)_{b_j} }{b_{j}!}.
\end{align}
\noindent The number of trees in $\mathbb{T}_{\mathbf{n}}$ with sequences of $q_j-b_j$ free trees $T_j$, for $1\le j\le m$, is given by replacing $q_j$ by $q_j-b_j$ in \eqref{ea1}--\eqref{ea7}. Hence, we obtain \eqref{ea6}, extending \eqref{ea4}.
\end{proof}

We record two important special cases of the computation above.

\begin{lemma}  \label{LemC}
Let $\mathbf{n}$ be a degree statistic and let 
$\mathcal{T}_{\mathbf{n}} \sim {\rm Unif}(\mathbb{T}_{\mathbf{n}})$. 
\begin{enumerate}[label=\upshape(\roman*)]
\item \label{LemC1}
For $q \in \mathbb{N}$ and $T \in \mathbb{T}$ such that $|\mathbf{n}|\geq q|T|-q+1$,
\begin{align}  \label{eq3''}
\E[(N_{T}(\mathcal{T}_{\mathbf{n}}))_{q}]  =  \frac{|\mathbf{n}|}{(|\mathbf{n}|)_{q|T|-q+1}} \prod_{i\geq 0} (n(i))_{qn_{T}(i)}. 
\end{align} 
\item \label{LemC2}
For distinct
$T, T^{\prime} \in \mathbb{T}$ such that $|\mathbf{n}| \geq |T|+|T^{\prime} |-1$, 
\begin{align} \label{eq4}
\E[N_{T}(\mathcal{T}_{\mathbf{n}})N_{T^{\prime}}(\mathcal{T}_{\mathbf{n}})]  
=   N_{T}(T^{\prime} ) \E[N_{T^{\prime} }(\cTn)]
+ N_{T^{\prime} }(T)\E[N_{T}(\cTn)] +  \frac{|\mathbf{n}|}{(|\mathbf{n}|)_{|T| +|T^{\prime}|-1}} \prod_{i\geq 0} (n(i))_{n_{T}(i) + n_{T^{\prime}}(i)}. 
\end{align}  
\end{enumerate}
\end{lemma}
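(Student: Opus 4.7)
The plan is to deduce both identities as direct specializations of \refL{lemma4} together with the expansion \eqref{ea2}. The key observation is that the definition \eqref{ea3} gives $\tau_{ii}=0$ for every $i$, so in the formula \eqref{ea6} any factor of the form $\bigl(\sum_{k}(q_k-b_k)\tau_{jk}\bigr)_{b_j}$ will vanish whenever the only nonzero contribution to the inner sum would come from $k=j$ itself. This eliminates most of the terms in the sum over $b$.

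For part \ref{LemC1} I would take $m=1$, $q_1=q$, $T_1=T$. Then $\tau_{11}=0$, so the factor in \eqref{ea6} becomes $(0)_{b_1}$, which is $0$ unless $b_1=0$. Hence only the $b=0$ term in \eqref{ea2} survives, and \eqref{ea4} gives
\begin{equation*}
\E[(N_T(\cTn))_q]=\E[S_0(\cTn)]=\frac{1}{(|\bn|-1)_{q(|T|-1)}}\prod_{i\ge0}(n(i))_{qn_T(i)}.
\end{equation*}
A short manipulation $(|\bn|)_{q|T|-q+1}=|\bn|\cdot(|\bn|-1)_{q(|T|-1)}$ rewrites this as \eqref{eq3''}.

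For part \ref{LemC2} I take $m=2$, $q_1=q_2=1$, $T_1=T$, $T_2=T'$ with $T\neq T'$, so $\tau_{12}=N_T(T')$ and $\tau_{21}=N_{T'}(T)$. The sum in \eqref{ea2} runs over $b=(b_1,b_2)\in\{0,1\}^2$. The case $b=(1,1)$ contributes $0$ since each inner sum vanishes, while the three remaining cases give: $b=(0,0)$ produces the third term of \eqref{eq4} straight from \eqref{ea4}; $b=(1,0)$ has prefactor $N_T(T')$ coming from $\bigl((q_2-b_2)\tau_{12}\bigr)_{b_1}=N_T(T')$, and the remaining factors reproduce $\E[N_{T'}(\cTn)]$ via \eqref{eq5}; and $b=(0,1)$ is symmetric, yielding $N_{T'}(T)\E[N_T(\cTn)]$. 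Summing the three contributions gives \eqref{eq4}.

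There is essentially no obstacle here beyond bookkeeping; the only point that requires a bit of care is recognising that $\tau_{ii}=0$ collapses the double sum to the three nonzero terms, and that the ``reduced tree'' prefactor appearing for $b=(1,0)$ and $b=(0,1)$ exactly reconstructs the single-tree expectation given by \refL{lemma1}. I would verify the falling-factorial identity $(|\bn|)_{q|T|-q+1}=|\bn|(|\bn|-1)_{q(|T|-1)}$ explicitly, and also double-check that the size conditions $|\bn|\ge q|T|-q+1$ and $|\bn|\ge|T|+|T'|-1$ assumed in the statement are exactly the conditions needed to invoke \refL{lemma4} with the respective parameters.
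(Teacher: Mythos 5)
Your proposal is correct and follows essentially the same route as the paper: both parts are read off from \eqref{ea2} and \eqref{ea6}, with only $b=0$ surviving when $m=1$ (the paper phrases this as ``there are no bound fringe trees,'' you as $\tau_{11}=0$ forcing $(0)_{b_1}=0$), and with the three vectors $b=(0,0),(1,0),(0,1)$ producing the three terms of \eqref{eq4} exactly as in the paper's proof. The bookkeeping you describe, including the identity $(|\mathbf{n}|)_{q|T|-q+1}=|\mathbf{n}|(|\mathbf{n}|-1)_{q(|T|-1)}$ and the size conditions, checks out.
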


\begin{proof}
\pfitemref{LemC1}
This is the case $m=1$ of \eqref{ea2} and \eqref{ea6}, when we consider only one
tree $T_1$. In this case, there are no bound fringe trees, and thus we only
have to consider $b=0$ in \eqref{ea2}. Taking $b_1=0$ (and $q_1=q$) in
\eqref{ea6} yields \eqref{eq3''}.

\pfitemref{LemC2} 
This is the case $m=2$ and $q_1=q_2=1$ of \eqref{ea2}.
The possible vectors $b=(b_1,b_2)$ are $(1,0)$, $(0,1)$, and $(0,0)$,
and it is easily verified that taking these three vectors
in \eqref{ea6}, and using \eqref{eq3''} with $q=1$ in two cases, 
yields the three terms
on the right-hand side of \eqref{eq4}.
\end{proof}

\section{Proof of Theorems \ref{TheoLLN} and \ref{TA}}
\label{SpfLLN+A}

In this section we prove Theorems  \ref{TheoLLN} and \ref{TA} (in
opposite order).
In  what follows we will frequently use the following well-known estimate.   

\begin{lemma}\label{L0}
If\/ $x \geq 1$ is a real number and $0 \leq k \leq x/2$ is an integer, then
\begin{align}
(x)_{k} = x^{k} \exp \left( -\frac{k(k-1)}{2x} +O\left(\frac{k^3}{x^2} \right) \right).
\end{align}
\end{lemma}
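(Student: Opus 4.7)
The plan is straightforward: write $(x)_{k} = x^{k}\prod_{j=0}^{k-1}(1 - j/x)$, take logarithms, and apply a Taylor estimate to each factor. The hypothesis $k \le x/2$ ensures that $j/x \le (k-1)/x < 1/2$ for every $j$ appearing in the product, so every argument stays in a fixed compact subinterval of $[0,1)$ on which $\log(1-t) = -t + O(t^{2})$ holds with an absolute implicit constant; concretely, $|\log(1-t)+t| = \sum_{n\ge 2} t^{n}/n \le t^{2}/(1-t) \le 2t^{2}$ for $t \in [0,1/2]$.

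Summing this expansion over $j = 0,\dots,k-1$ yields
\begin{align*}
\sum_{j=0}^{k-1}\log(1 - j/x)
&= -\frac{1}{x}\sum_{j=0}^{k-1} j + O\!\left(\frac{1}{x^{2}}\sum_{j=0}^{k-1} j^{2}\right)
= -\frac{k(k-1)}{2x} + O\!\left(\frac{k^{3}}{x^{2}}\right),
\end{align*}
using the elementary identities $\sum_{j=0}^{k-1} j = k(k-1)/2$ and $\sum_{j=0}^{k-1} j^{2} \le k^{3}$. Adding $k\log x$ to both sides and exponentiating gives exactly the claimed formula.

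There is no genuine obstacle: the argument is a one-step Taylor expansion. The only item requiring a little care is verifying that the implicit constant in $\log(1-t) = -t + O(t^{2})$ is absolute on $[0,1/2]$, so that the final $O(k^{3}/x^{2})$ term has a constant independent of both $k$ and $x$. The role of the hypothesis $k \le x/2$ is precisely to keep every ratio $j/x$ inside the interval where this uniform Taylor bound is available; without it, factors $(1-j/x)$ could approach zero and the logarithmic expansion would break down.
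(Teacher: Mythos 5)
Your proof is correct and is essentially identical to the paper's: both write $(x)_k/x^k=\prod_{j=0}^{k-1}(1-j/x)$, expand $\ln(1-y)=-y+O(y^2)$ uniformly on $[0,1/2]$, and sum. Your explicit verification of the uniform constant and the role of the hypothesis $k\le x/2$ is a welcome (if routine) elaboration of what the paper leaves implicit.
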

\begin{proof}
Since $\ln(1-y)=-y+O(y^2)$ for $0\leq y\leq 1/2$, the result follows from the identity
\begin{align}
\frac{(x)_k}{x^k} =\prod_{i=0}^{k-1} \frac{x-i}{x} =\exp\left(\sum_{i=0}^{k-1} \ln\left(1-\frac{i}{x} \right)\right).
\end{align}
\end{proof}

\begin{proof}[Proof of \refT{TA}]
  Note first the trivial bound 
  \begin{align}\label{ta2}
    N_T(\cTn) \le \frac{n(i)}{n_T(i)} \le n(i),
\qquad i\in\cD(T),
  \end{align}
since the copies of $T$ in $\cTn$ are distinct.
Furthermore, by \eqref{pip} and \eqref{pin}, 
\begin{align}\label{ta3}
 |\bn|\pi_\bpn(T) \le |\bn| p_i(\bn) = n(i),
\qquad 
i\in\cD(T).
\end{align}
Hence, \eqref{ta1} is trivial if $n(i)=O(1)$ for some $i\in\cD(T)$.
In particular, we may in the sequel assume $n(i)\ge 2 n_T(i)$ for every $i \geq 0$,
and thus $|\bn|\ge 2|T|$.
Then, by \eqref{eq5} and \refL{L0},
\begin{align}\label{ta4}
\E N_T(\cTn)& 
=|\bn|^{1-|T|}\prod_{i\in\cD(T)} n(i)^{n_T(i)}
\cdot\exp\lrpar{\frac{|T|(|T|-1)}{2|\bn|}-\sum_{i\in\cD(T)}
\frac{n_T(i)(n_T(i)-1)}{2n(i)}+O\Bigpar{\sum_{i\in\cD(T)}\frac{1}{n(i)^2}}}
\notag\\&
=\ENTX 
\cdot\exp\lrpar{\frac{|T|(|T|-1)}{2|\bn|}-\sum_{i\in\cD(T)}
\frac{n_T(i)(n_T(i)-1)}{2n(i)}+O\Bigpar{\sum_{i\in\cD(T)}\frac{1}{n(i)^2}}}
,\end{align}
which implies \eqref{ta1} by \eqref{ta3}.

Similarly, taking $q=2$ in \eqref{eq3''}, and now assuming as we may $n(i)\ge
4n_T(i)$ for every $i \geq 0$,
\begin{align}\label{ta11}
\E \xpar{N_T(\cTn)}_2& 
=  \frac{|\bn|}{(|\bn|)_{2|T|-1}} \prod_{i\in\cD(T)} (n(i))_{2n_{T}(i)}
\notag\\&
=|\bn|^{2-2|T|}\prod_{i\in\cD(T)} n(i)^{2n_T(i)}
\cdot\exp\lrpar{\frac{(2|T|-1)(2|T|-2)}{2|\bn|}-\sum_{i\in\cD(T)}
\frac{2n_T(i)(2n_T(i)-1)}{2n(i)}+O\Bigpar{\sum_{i\in\cD(T)}\frac{1}{n(i)^2}}}
\notag\\&
=
\bigpar{|\bn|\pi_\bpn(T)}^2
\cdot\exp\lrpar{\frac{(2|T|-1)(|T|-1)}{|\bn|}-\sum_{i\in\cD(T)}
\frac{n_T(i)(2n_T(i)-1)}{n(i)}+O\Bigpar{\sum_{i\in\cD(T)}\frac{1}{n(i)^2}}}
,\end{align}
Hence, using also \eqref{ta4},
\begin{align}  \label{ta12}
\E \xpar{N_T(\cTn)}_2& 
=\bigpar{\E N_T(\cTn)}^2
\cdot\exp\lrpar{\frac{(|T|-1)^2}{|\bn|}-\sum_{i\in\cD(T)}
\frac{n_T(i)^2}{n(i)}+O\Bigpar{\sum_{i\in\cD(T)}\frac{1}{n(i)^2}}}
.
\end{align}
Consequently,
using \eqref{ta1} and noting that $\E N_T(\cTn) = O(n(i))$ for $i\in\cD(T)$
by \eqref{ta1} and \eqref{ta3}, 
\begin{align}\label{ta13}
  \Var \sqpar{N_T(\cTn)}&
= \E \xpar{N_T(\cTn)}_2 + \E N_T(\cTn)
- \bigpar{\E N_T(\cTn)}^2
\notag\\&
=\bigpar{\E N_T(\cTn)}^2
\cdot\lrpar{\frac{(|T|-1)^2}{|\bn|}-\sum_{i\in\cD(T)}
\frac{n_T(i)^2}{n(i)}}
+ \E N_T(\cTn) + O(1)
\notag\\&
=\bigpar{\ENTX}^2
\cdot\lrpar{\frac{(|T|-1)^2}{|\bn|}-\sum_{i\in\cD(T)}
\frac{n_T(i)^2}{n(i)}}
+ \ENTX + O(1)
,\end{align}
which yields \eqref{ta10} by the definitions \eqref{eq20}, \eqref{eq12} and
\eqref{pin}.

For the proof of \eqref{ta20} we use \eqref{eq4}. 
The first two terms are handled by \eqref{ta1}, and the final term is
treated as in \eqref{ta11}--\eqref{ta13} with mainly notational differences;
we omit the details.
\end{proof}

\begin{proof}[Proof of Theorem \ref{TheoLLN}]
By Condition \ref{Condition1},
we have $p_i(\bnk)\to p_i$ for every $i\ge0$, and thus
$\pi_\bpnk(T)\to\pi_\bp(T)$.
Hence, \ref{Annealed} follows from \eqref{ta1}.

Moreover, it follows from \eqref{eq12}--\eqref{eq20} that 
$\gamma_\bpnk(T,T)=O(1)$ (for a fixed $T$),
and thus  \eqref{ta10} yields $\Var N_T(\Tnk) = O(|\bnk|)$.
Therefore, \ref{Quenched} follows from \ref{Annealed} and Chebyshev's
inequality.
\end{proof}

\section{Proof of  Theorems \ref{TheoCLT} and \ref{TheoCLT2}}
\label{SpfCLT}

We have now all the ingredients to prove
Theorems \ref{TheoCLT} and Theorem \ref{TheoCLT2}. 
Theorem \ref{TheoCLT} is essentially a  special case
\refT{TheoCLT2}, combined with the already proved \refT{TA}.
Nevertheless, in order to focus on the main ideas,
we give first a separate proof of \refT{TheoCLT},
and then  the rather small modifications required for the 
more technical general version in Theorem \ref{TheoCLT2}.

\begin{proof}[Proof of Theorem \ref{TheoCLT}]
First note that 
\refCond{Condition1} implies 
\begin{align}\label{ju1}
\pi_{\bpnk}(T_i)\to\pip(T_i)
\quad\text{and}\quad
\gamma_\bpnk(T_i,T_j)\to\gamma_\bp(T_i,T_j), \qquad \text{for }1\le i,j\le m.
\end{align}
Hence,
\eqref{eq6}--\eqref{eq13} follow from
\eqref{ta1}--\eqref{ta20} in \refT{TA}.

We next prove the asymptotic normality result in (\ref{eq7}). 
Note first that \eqref{ta1} implies that it does not matter whether we use 
$\E[N_{T_{i}}(\mathcal{T}_{\mathbf{n}_{\kappa}})]$ 
or $\mu_\bnk(T_i)=|\mathbf{n}_{\kappa}| \pi_{\mathbf{p}(\mathbf{n}_{\kappa})}(T_{i})$
in \eqref{eq7}.

If $\pi_{\mathbf{p}}(T_{i}) = 0$, for some $1 \leq i \leq m$, 
then it follows from 
\eqref{eq20} that $\gammap(T_i,T_i)=0$, and thus \eqref{eq15} yields
$\Var[N_{T_i}(\Tnk)]=o(|\bnk|)$; consequently,
\eqref{ta1} and Chebyshev's inequality yield, as $\kappa \rightarrow \infty$,
\begin{eqnarray}\label{ju9}
\frac{N_{T_{i}}(\mathcal{T}_{\mathbf{n}_{\kappa}}) - \E[N_{T_{i}}(\mathcal{T}_{\mathbf{n}_{\kappa}})]}{\sqrt{|\mathbf{n}_{\kappa}|}} \pto 0.
\end{eqnarray}
Hence, convergence of the $i$-th component in \eqref{eq7} is trivial in this case.
Furthermore, $\pip(T_i)=0$ also implies $\gammap(T_i,T_j)=0$ for every $1 \le j\le m$
by \eqref{eq21}, noting that if $N_{T_i}(T_j)>0$ then also $\pip(T_j)=0$.
Thus, we may ignore all $i$ in \eqref{eq7} with $\pip(T_i)=0$
and show (joint) convergence for the remaining ones,
because then (\ref{eq7}) in general will
follow from \cite[Theorem 3.9 in Chapter 1]{Billingsley1999}. 
Consequently, 
we henceforth
assume 
that $\pi_{\mathbf{p}}(T_{i}) > 0$ for
all $1 \leq i \leq m$. 
Equivalently, $p_{k} >0$ for every $k\in\bigcup_{i=1}^m \cD(T_i)$.
We may also assume that $T_1,\dots,T_m$ are distinct.

To see the main idea of the proof, consider first the univariate case $m=1$.
(The impatient reader may skip this and proceed directly to the general case.)
We omit the index 1 and write $T$ instead of $T_1$.
In this case, we can use the
Gao--Wormald theorem \cite[Theorem 1]{Gao2004}
and the following estimate.
For  any $q_\kk=O(|\bnk|\qq)$,
\eqref{eq3''} and \refL{L0} yield,
recalling the definitions \eqref{pin}, \eqref{pip}, 
\eqref{eq12}, \eqref{eq20}, and \eqref{mu}
 of $p_i(\bn)$, $\pip(T)$, $\eta_{\mathbf{p}}(T,T)$,  $\gammap(T,T)$, 
and $\mu_\bnk(T)$,
\begin{align}\label{jum1}
\E[(N_{T}(\mathcal{T}_{\mathbf{n}_{\kappa}}))_{q_{\kappa}}] 
&=
\frac{\prod_{i \geq 0}{n_\kk(i)}^{ q_{\kk} n_{T}(i)}}
{|\bnk|^ {q_{\kk}(|T|-1)}}
 \exp\lrpar{\frac{\bigpar{ q_{\kk}(|T|-1)}^2}{2|\bnk|}
-\sum_{i \geq 0}\frac{\bigpar{ q_{\kk}n_{T}(i)}^2}{2n_\kk(i)}
+o(1)}
\notag\\&
=|\bnk|^{q_{\kk}}\prod_{i \geq 0}
 p_i(\bnk)^{q_{\kk} n_{T}(i)}
\exp\lrpar{\frac{\bigpar{ q_{\kk}(|T|-1)}^2}{2|\bnk|}
-\sum_{i \geq 0}\frac{\bigpar{ q_{\kk} n_{T}(i)}^2}{2n_\kk(i)}
+o(1)}
\notag\\& 
= \bigpar{|\bnk|\pi_{\mathbf{p}(\mathbf{n}_{\kappa})}(T)}^{q_{\kappa}} 
\exp \left( \frac{q_{\kappa}^{2}}{2 |\mathbf{n}_{\kappa}|} 
\eta_{\mathbf{p}(\mathbf{n}_{\kappa})}(T,T) +  o(1) \right) 
\notag\\& 
= \mu_{\bnk}(T)^{q_{\kappa}} 
\exp \left( \frac{(\gamma_{\bpnk}(T,T)-\pi_{\bpnk}(T))|\bnk|}{2 \mu_{\bnk}(T)^2} q_{\kappa}^{2}
 +  o(1) \right) 
\notag\\& 
= \mu_{\bnk}(T)^{q_{\kappa}} 
\exp \left( \frac{\gammap(T,T)|\bnk|-\mu_{\bnk}(T)}{2 \mu_{\bnk}(T)^2} q_{\kappa}^{2}
 +  o(1) \right) 
.\end{align}
If $\gammap(T,T)>0$, we may now apply 
the Gao--Wormald theorem \cite[Theorem 1]{Gao2004} with
$\mu_\kk:=\mu_{\bnk}(T)$ and 
$\gs_\kk^2:=\gammap(T,T)|\bnk|$ and conclude \eqref{eq70},
which by \eqref{eq15} is equivalent to \eqref{eq7} (with $m=1$). 
The case $\gammap(T,T)=0$ is trivial, since then \eqref{eq15} implies
\eqref{ju9}. 
Alternatively, for any $\gammap(T,T)$,
we may take the same $\mu_\kk$ but $\gs_\kk^2:=|\bnk|$ in the
case $m=1$ of our version \refT{TGW} of the Gao--Wormald theorem.

The general case follows similarly by 
a multidimensional version of the Gao--Wormald theorem, 
which we state and prove as
Theorem \ref{TGW} in Appendix \ref{AppendixA}, 
together with the following  estimates of (joint)
factorial moments. The main complication in the multivariate case is the
possibility that fringe trees of type $T_j$ may contain fringe trees of type
$T_k$ for some $1\le j,k\le m$; we thus use the decomposition in \eqref{ea2}
and estimate the terms separately.

Write for convenience 
$\mu_{i \kappa} \coloneqq \mu_\bnk(T_i)=|\mathbf{n}_{\kappa}|
\pi_{\mathbf{p}(\mathbf{n}_{\kappa})} (T_{i})$,
for $1 \leq i \leq m$. Note that our assumption $\pip(T_i)>0$ and \eqref{ju1}
imply that (possibly ignoring some small $\kk$)
\begin{align}
  \label{ju2}
\mu_{i\kk}=\Theta(|\bnk|),
\qquad 1\le i\le m.
\end{align}
Furthermore, 
for every $i\in\bigcup_{j=1}^{m}\cD(T_j)$, we have  by assumption $p_i>0$,
and thus \refCond{Condition1} implies 
\begin{align}
  \label{ju3}
n_\kk(i)=\Theta(|\bnk|).
\end{align}

Let $q_{1\kappa}, \dots, q_{m \kappa} \in \mathbb{N}_{0}$ 
be such that 
\begin{align}\label{ju4}
q_{i \kappa} =O(|\bnk|\qq), \qquad  1 \leq i \leq m
.\end{align}

Let $b=(b_{1}, \dots, b_{m}) \in \mathbb{N}_{0}^{m}$ be a fixed
sequence.
Assume first that $q_{j \kappa}\ge b_j$ for $1 \leq j \leq m$. 
Then
we deduce from (\ref{ea6}) in Lemma \ref{lemma4} and \refL{L0},
using \eqref{ju2}--\eqref{ju4} and 
recalling
\eqref{pin},  
\eqref{pip} and \eqref{mu},
\begin{align}\label{er1}
\E S_b(\Tnk)&
=
\frac{\prod_{i \geq 0}{n_\kk(i)}^{\sum_{j=1}^{m} (q_{j\kk}-b_j) n_{T_j}(i)}}
{|\bnk|^{\sum_{j=1}^{m} (q_{j\kk}-b_j)(|T_j|-1)}}
  \prod_{j=1}^{m}\frac{(q_{j\kk}+O(1))^{b_j} 
  \bigpar{\sum_{k=1}^{m}(q_{k\kk}-b_k)\tau_{jk}+O(1)}^{b_j}}{b_j!}
\notag\\&\qquad\times
\exp\lrpar{\frac{\bigpar{\sum_{j=1}^{m} (q_{j\kk}-b_j)(|T_j|-1)}^2}{2|\bnk|}
-\sum_{i \geq 0}\frac{\bigpar{\sum_{j=1}^{m} (q_{j\kk}-b_j)n_{T_j}(i)}^2}{2n_\kk(i)}
+o(1)}
\notag\\&
=|\bnk|^{\sum_{j=1}^{m} (q_{j\kk}-b_j)}\prod_{i \geq 0}
 \prod_{j=1}^{m}p_i(\bnk)^{(q_{j\kk}-b_j) n_{T_j}(i)}
\prod_{j=1}^{m}\frac{ \bigpar{q_{j\kk}\sum_{k=1}^{m} q_{k\kk}\tau_{jk}+O(q_{j\kk}+\sum_{k=1}^{m}\tau_{jk}q_{k\kk})}^{b_j}}{b_j!}
\notag\\& 
\qquad\times
\exp\lrpar{\frac{\bigpar{\sum_{j=1}^{m} q_{j\kk}(|T_j|-1)}^2}{2|\bnk|}
-\sum_{i \geq 0}\frac{\bigpar{\sum_{j=1}^{m} q_{j\kk} n_{T_j}(i)}^2}{2n_\kk(i)}
+o(1)}
\notag\\& 
=|\bnk|^{\sum_{j=1}^{m} (q_{j\kk}-b_j)}\prod_{j=1}^{m} \pipnk(T_j)^{q_{j\kk}-b_j}
\prod_{j=1}^{m}\frac{ \bigpar{q_{j\kk}\sum_{k=1}^{m} q_{k\kk}\tau_{jk}+O(\mu_{j\kk}\qq)}^{b_j}}{b_j!}
\notag\\& 
\qquad\times
\exp\lrpar{\frac{\bigpar{\sum_{j=1}^{m} q_{j\kk}(|T_j|-1)}^2}{2|\bnk|}
-\sum_{i \geq 0}\frac{\bigpar{\sum_{j=1}^{m} q_{j\kk} n_{T_j}(i)}^2}{2n_\kk(i)}
+o(1)}
\notag\\&
=\prod_{j=1}^{m}\mujk^{q_{j\kk}}
\prod_{j=1}^{m}\frac{ \bigpar{q_{j\kk}\sum_{k=1}^{m} q_{k\kk}\tau_{jk}/\mujk+o(1)}^{b_j}}
{b_j!}
\notag\\& \qquad\times
\exp\lrpar{\frac{\bigpar{\sum_{j=1}^{m} q_{j\kk}(|T_j|-1)}^2}{2|\bnk|}
-\sum_{i \geq 0}\frac{\bigpar{\sum_{j=1}^{m} q_{j\kk} n_{T_j}(i)}^2}{2n_\kk(i)}
+o(1)}
.\end{align}
On the right-hand side,
by \eqref{ju2}--\eqref{ju4},
each factor in the second product  is $O(1)$, and so
is the exponential factor.
Consequently, \eqref{er1} yields
\begin{align}\label{er2}
\frac{\E S_b(\Tnk)}{\prod_{j=1}^{m}\mujk^{q_{j\kk}}}
=
\prod_{j=1}^{m}\frac{ \bigpar{q_{j\kk}\sum_{k=1}^{m} q_{k\kk}\tau_{jk}/\mujk}^{b_j}}
{b_j!}
\cdot
\exp\lrpar{\frac{\bigpar{\sum_{j=1}^{m} q_{j\kk}(|T_j|-1)}^2}{2|\bnk|}
-\sum_{i\geq 0}\frac{\bigpar{\sum_{j=1}^{m} q_{j\kk} n_{T_j}(i)}^2}{2n_\kk(i)}}
+o(1)
.\end{align}
This is trivially true also if $q_{j\kk}<b_j$ for some $1 \leq j \leq m$, since then
$S_b(\Tnk)=0$ and the first term on the right-hand side of \eqref{er2} then is
easily seen to be $o(1)$.
Hence, \eqref{er2} holds for every fixed $b\in\bbNo^m$, uniformly for all
$(q_{j\kk})_{j=1}^m$ that satisfy \eqref{ju4}.
 
Furthermore, a simple variant of this calculation shows that,  
for each constant $C >0$,
uniformly for all $\kk \geq 1$, $b=(b_{1}, \dots, b_{m})\in \mathbb{N}_{0}^{m}$ 
and  $q_{1\kk},\dots,q_{m\kk} \in \mathbb{N}_{0}$ 
such that 
\eqref{ju4} holds,
\begin{align} \label{ec2a}
\E S_{b}(\mathcal{T}_{\mathbf{n}_{\kappa}}) & 
\leq \frac{\prod_{i \geq 0}  n_{\kappa}(i)^{\sum_{j=1}^{m}(q_{j \kappa} - b_{j})n_{T_{j}}(i)}  }{|\mathbf{n}_{\kappa}|^{\sum_{j=1}^{m}(q_{j \kappa}-b_{j})(|T_{j}|-1)}} \prod_{j=1}^{m} \frac{q_{j \kappa}^{b_j} \left( \sum_{k=1}^{m} q_{k \kappa} \tau_{jk} \right)^{b_j} }{b_{j}!} \exp(O(1)) \nonumber \\
& \leq C_1 \prod_{j=1}^{m} \mu_{j\kappa}^{q_{j \kappa}} \prod_{j=1}^{m} \frac{C_2^{b_{j}}}{b_{j}!},
\end{align}
for some constants $C_1 >0$ and $C_2>0$ (depending on $C$).
Equivalently,
\begin{align} \label{ec2b}
\frac{\E[S_{b}(\mathcal{T}_{\mathbf{n}_{\kappa}})]}
{\prod_{j=1}^{m} \mu_{j\kappa}^{q_{j \kappa}}} 
\le C_1\prod_{j=1}^{m} \frac{C_2^{b_{j}}}{b_{j}!}.
\end{align}
By \eqref{ju2}--\eqref{ju4},
the same estimate  holds for
the first term on the right-hand side of \eqref{er2}, and thus it holds also
for the $o(1)$ term there.

Now sum \eqref{er2} over all $b=(b_{1}, \dots, b_{m}) \in \mathbb{N}_{0}^{m}$.
The sum of the error terms $o(1)$ is 
$o(1)$ by dominated convergence justified by \eqref{ec2b} and the comments
after it. 
Hence,
\eqref{ea2}
and \eqref{er2} yield, uniformly for all $q_{1\kk},\dots,q_{m\kk}$
satisfying \eqref{ju4}, 
recalling
$n_{\kappa}(i) = |\mathbf{n}_{\kappa}| p_{i}(\mathbf{n}_{\kappa})$ for $i \geq 0$, 
\begin{align} \label{eb2}
\frac{\E[(N_{T_{1}}(\mathcal{T}_{\mathbf{n}}))_{q_{1 \kappa}}
\dotsm (N_{T_{m}}(\mathcal{T}_{\mathbf{n}}))_{q_{m  \kappa}}]}
{\prod_{j=1}^{m} \mu_{j\kappa}^{q_{j \kappa}}}
&= 
\prod_{j=1}^{m} \exp 
\left( \frac{ q_{j \kappa} \sum_{k=1}^{m}q_{k \kappa} \tau_{jk}}{\mu_{j \kappa}} \right)
 \nonumber  \\& 
\qquad \times \exp \left(  \frac{\left(\sum_{j=1}^{m}q_{j \kappa}(|T_{j}|-1) \right)^{2}}{2 |\mathbf{n}_{\kappa}|} - \sum_{i\ge0} \frac{\left(\sum_{j=1}^{m} q_{j \kappa} n_{T_{j}}(i) \right)^{2}}{2 |\mathbf{n}_{\kappa}| p_{i}(\mathbf{n}_{\kappa}) } \right) + o(1) \nonumber  \\
& =  \exp \left( \frac{1}{2} \sum_{j,k=1}^{m} 
\frac{r_{\kappa}(j,k) |\bnk| }{\mu_{j\kappa} \mu_{k\kappa} }q_{j \kappa}q_{k \kappa} 
\right) + o(1),
\end{align}
where, by a simple calculation recalling \eqref{mu} and \eqref{eq12},
\begin{align} \label{eb3}
r_{\kappa}(j,k)  \coloneqq 2 \tau_{jk}
  \pi_{\mathbf{p}(\mathbf{n}_{\kappa})}(T_{k}) +
  \eta_{\mathbf{p}(\mathbf{n}_{\kappa})}(T_{j},T_{k})
  \pi_{\mathbf{p}(\mathbf{n}_{\kappa})}(T_{j})
  \pi_{\mathbf{p}(\mathbf{n}_{\kappa})}(T_{k}),
\qquad 1 \leq j,k \leq m. 
\end{align}
In \eqref{eb2}, we may replace $r_{\kappa}(j,k)$ by the symmetrization
$\tr_{\kappa}(j,k)\coloneqq \frac{1}{2} (r_{\kappa}(j,k) +
r_{\kappa}(k,j))$. 
Comparing \eqref{eb3} and \eqref{eq20}--\eqref{eq21}, using
\eqref{ea3} and treating the cases $j=k$ and $j\neq k$ separately,
we obtain
\begin{align}\label{gw8}
  \tr_\kk(j,k)=\gamma_\bpnk(T_j,T_k)-\gd_{jk}\pipnk(T_j),
\qquad 1\le j,k\le m
.\end{align}
Define $\gs_{j\kk}:=|\bnk|\qq$ for every $1 \leq j \leq m$;
then \eqref{eb2} yields,
using \eqref{gw8} and \eqref{ju1},
\begin{align}\label{ju6}
  \E\bigsqpar{(N_{T_{1}}(\mathcal{T}_{\mathbf{n}}))_{q_{1 \kappa}}
\dotsm (N_{T_{m}}(\mathcal{T}_{\mathbf{n}}))_{q_{m  \kappa}}}
=\prod_{j=1}^{m} \mu_{j\kappa}^{q_{j \kappa}}
 \exp \left( \frac{1}{2} \sum_{j,k=1}^{m} 
\frac{{\gamma_\bp(T_j,T_k)\gs_{j\kk}\gs_{k\kk}-\gd_{jk}\mu_{j\kk}} }{\mu_{j\kappa} \mu_{k\kappa} }q_{j \kappa}q_{k \kappa}+o(1) 
\right),
\end{align}
uniformly in all $q_{1\kk},\dots,q_{m\kk}$ that satisfy \eqref{ju4}.
We apply Theorem
\ref{TGW}, 
and note that \eqref{ju6} is the condition \eqref{tgw2} 
(with obvious changes of notation);
furthermore, by \eqref{ju1}, our choices
$\mu_{i \kappa}:=|\mathbf{n}_{\kappa}|\pi_{\mathbf{p}(\mathbf{n}_{\kappa})} (T_{i})$ 
and
$\sigma_{i \kappa} \coloneqq |\mathbf{n}_{\kappa}|^{1/2}$
satisfy \eqref{tgw1}.
Hence, Theorem \ref{TGW} yields \eqref{eq7}. 

Finally, the last assertion of Theorem \ref{TheoCLT} is proved in Lemma
\ref{L+1} below. 
\end{proof}

\begin{lemma}\label{L+1}
Suppose that $\bp=(p_i)_{i \geq 0}\in\ppi$, and
let $T\in\bbT$ with $|T|>1$. If $\pi_\bp(T)>0$, then $\gamma_\bp(T,T)>0$.
\end{lemma}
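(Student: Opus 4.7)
The plan is to factor
\[
\gamma_\bp(T,T) = \pi_\bp(T)\bigl[\,1 + \pi_\bp(T)\,\eta_\bp(T,T)\,\bigr]
\]
using \eqref{eq20}; since $\pi_\bp(T) > 0$ by hypothesis, the claim reduces to showing the strict inequality $1 + \pi_\bp(T)\,\eta_\bp(T,T) > 0$.

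First I would establish the weak bound $1 + \pi_\bp(T)\,\eta_\bp(T,T) \ge 0$ as a consequence of \refT{TA}. By the remark following \refCond{Condition1}, there is a sequence $\bnk$ of degree statistics with $\bpnk \to \bp$. The variance estimate \eqref{ta10} gives $\Var N_T(\Tnk) = |\bnk|\gamma_{\bpnk}(T,T) + O(1) \ge 0$, and since $\gamma_\bp(T,T)$ is a polynomial (hence continuous) in the finitely many coordinates $(p_i)_{i\in\cD(T)}$ by \eqref{eq12} and \eqref{eq20}, dividing by $|\bnk|$ and letting $\kk\to\infty$ yields $\gamma_\bp(T,T) \ge 0$, hence $1 + \pi_\bp(T)\eta_\bp(T,T)\ge 0$.

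To upgrade to strict inequality, I would argue by contradiction. Suppose $F(\bp) := 1 + \pi_\bp(T)\,\eta_\bp(T,T) = 0$ at some $\bp^* \in \ppi$ with $\pi_{\bp^*}(T) > 0$; then $\bp^*$ is a global minimum of the polynomial $F$ on the open region $\{\bp \in \ppi : p_i > 0 \text{ for all } i \in \cD(T)\}$, so the Lagrange conditions hold for the constraints $\sum_i p_i = 1$ and $\sum_i i\,p_i \le 1$. A direct computation from \eqref{eq12} and \eqref{eq20} yields
\[
\partial_{p_i} F(\bp)
= \frac{n_T(i)\,\pi_\bp(T)}{p_i}\,\Bigl[\eta_\bp(T,T) + \frac{n_T(i)}{p_i}\Bigr],
\qquad i \in \cD(T).
\]
The combination of $F(\bp^*)=0$ (which forces $\eta_{\bp^*}(T,T) = -1/\pi_{\bp^*}(T)$) with the Lagrange conditions restricts $n_T(i)/p^*_i$ to take at most two distinct values over $i\in\cD(T)$ (the roots of a specific quadratic). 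In the cleanest subcase, where the mean constraint is not active and the common value is $1/\pi_{\bp^*}(T)$, one obtains $p^*_i = n_T(i)\pi_{\bp^*}(T)$. Substituting back into $\pi = \prod p_i^{n_T(i)}$ produces the identity $\pi_{\bp^*}(T) = \bigl(\prod_{i\in\cD(T)} n_T(i)^{n_T(i)}\bigr)^{-1/(|T|-1)}$, while summing $\sum p_i \le 1$ gives $\pi_{\bp^*}(T) \le 1/|T|$. These two bounds can then be pitted against each other using an elementary entropy inequality $\sum_i n_T(i)\log n_T(i) < (|T|-1)\log|T|$, which follows from $\sum_i n_T(i) = |T|$, $\sum_i i\,n_T(i) = |T|-1$, and the fact that $|T|\ge 2$ forces the empirical distribution $n_T(i)/|T|$ to be non-deterministic.

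The main obstacle will be making the Lagrange/case analysis comprehensive: in particular, handling the case when the mean constraint $\sum_i i\,p_i = 1$ is active introduces an extra Lagrange multiplier and enlarges the family of candidate critical points, and one must check that each such configuration either violates $F(\bp^*)=0$, $\pi_{\bp^*}(T)>0$, or $|T|>1$. I would handle this by a short combinatorial argument splitting $\cD(T)$ according to which root of the quadratic $n_T(i)/p^*_i$ equals, and using that $0\in\cD(T)$ together with at least one positive element (since $|T|>1$) to rule out the degenerate configurations.
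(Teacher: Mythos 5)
Your opening moves are sound and genuinely different from the paper's: the reduction to showing $1+\pi_\bp(T)\eta_\bp(T,T)>0$, the deduction of the weak inequality $\gamma_\bp(T,T)\ge 0$ from the non-negativity of the variance in \eqref{ta10} (or \eqref{eq15}) together with continuity of $\gamma_\bp(T,T)$ in the finitely many coordinates $(p_i)_{i\in\cD(T)}$, and your formula for $\partial_{p_i}F$ are all correct. So is the ``cleanest subcase'': if $p^*_i=n_T(i)\pi_{\bp^*}(T)$ for $i\in\cD(T)$, a contradiction follows --- in fact more quickly than via your entropy inequality, since $\eta_{\bp^*}(T,T)=-1/\pi_{\bp^*}(T)$ then forces $\pi_{\bp^*}(T)=1/(|T|-1)$, while $\sum_{i\in\cD(T)}p^*_i=|T|\,\pi_{\bp^*}(T)\le 1$ forces $\pi_{\bp^*}(T)\le 1/|T|$.

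The gap is that the decisive step --- the exhaustive classification of first-order critical points --- is only announced, and the structural claim you propose to organize it around is false in general. When the mean constraint $\sum_i i p_i\le 1$ is active with a nonzero multiplier $\mu$, stationarity reads $\pi_{\bp^*}(T)x_i^2+\pi_{\bp^*}(T)\eta_{\bp^*}(T,T)x_i=\lambda+\mu i$ with $x_i:=n_T(i)/p^*_i$, so each $i\in\cD(T)$ has its \emph{own} quadratic and the $x_i$ are not confined to two values. You must also impose the one-sided stationarity conditions in the directions $j\notin\cD(T)$ (where $\partial_{p_j}F=0$, so $\lambda+\mu j$ is pinned down or sign-constrained according to whether $p^*_j>0$), after first reducing the infinite-dimensional feasible set $\ppi$ to its projection onto the coordinates in $\cD(T)$. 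None of this looks fatal, but it is the entire content of the lemma, and ``a short combinatorial argument'' is not yet a proof. For comparison, the paper's proof sidesteps the optimization altogether: since \eqref{eq7} makes $(\gamma_\bp(T_i,T_j))_{i,j}$ positive semidefinite, $\gamma_\bp(T,T)=0$ would force $\gamma_\bp(T,T')=0$ for \emph{every} $T'$ by Cauchy--Schwarz; testing this against the trees $T_{d,k}$ (root of degree $d$ with $k$ children equal to $T$ and $d-k$ leaves) yields an identity linear in $k$ whose $k$-coefficient is $n_T(0)/p_0>0$, an immediate contradiction for $d\ge 2$, with the remaining case $\cD(T)\subseteq\{0,1\}$ handled directly. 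If you want to keep your variational route, you need to carry out the full KKT case analysis; otherwise the auxiliary-tree argument is considerably shorter.
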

Recall that the assumption $\pi_\bp(T)>0$ is equivalent to
$p_i>0$ for all $i\in\cD(T)$.

\begin{proof}
  Suppose that $\pi_\bp(T)>0$ but $\gamma_\bp(T,T)=0$.
By \eqref{eq20} and \eqref{eq12}, this means
\begin{align}\label{tx1}
    0 
=\frac{\gammap(T,T)}{\pip(T)^2}
=\frac{1}{\pip(T)}+\etap(T,T)
=\frac{1}{\pip(T)}+{(|T|-1)^2-\sum_{i\in\cD(T)}\frac{n_T(i)^2}{p_i}}.
\end{align}
Furthermore, since \eqref{eq7} in \refT{TheoCLT} applies, 
for any $m\geq1$ and trees $T_1,\dots,T_m\in\bbT$,
the matrix
$(\gammap(T_i,T_j))_{i,j=1}^{m}$ is a covariance matrix, and thus positive semidefinite.
Hence, the Cauchy--Schwarz inequality holds for $\gammap(T_i,T_j)$.
In particular, for any tree $T'\in\bbT$,
\begin{align}\label{tx2}
|\gammap(T,T')|\le\gammap(T,T)\qq\gammap(T',T')\qq=0.  
\end{align}

Fix $d\in\cD(T)$ with $d\ge1$. Let $1\le k\le d$ and define $T_{d,k}$ to be
the tree that has a root of degree $d$, the first $k$ children of the root  
are copies of $T$, and the remaining $d-k$ children of the root are leaves.
Thus $|T_{d,k}|=1+k|T|+d-k$ and
\begin{align}\label{tx3}
  n_{T_{d,k}}(i)=k n_T(i) + (d-k)\indic{\{i=0\}}+\indic{\{i=d\}}, \hspace*{4mm} i \geq 0.
\end{align}
Moreover, there are exactly $k$ fringe trees in $T_{d,k}$ that are equal to $T$, so
$N_T(T_{d,k})=k$, while $N_{T_{d,k}}(T)=0$.
Hence, \eqref{tx2}, \eqref{eq21}, and \eqref{tx3} yield
\begin{align}\label{tx4}
  0 
&= \frac{\gammap(T,T_{d,k})}{\pip(T)\pip(T_{d,k})}
=\frac{k}{\pip(T)}+\etap(T,T_{d,k})
\notag\\&
=\frac{k}{\pip(T)}
+(|T|-1)(k|T|+d-k)-\sum_{i\in\cD(T)}
 \frac{n_T(i)\bigpar{k n_T(i) + (d-k)\indic{\{i=0\}}+\indic{\{i=d\}}}}{p_i}
\notag\\&
=\frac{k}{\pip(T)}
+(|T|-1)(k|T|+d-k)-k\sum_{i\in\cD(T)} \frac{n_T(i)^2}{p_i}
-(d-k)\frac{n_T(0)}{p_0}-\frac{n_T(d)}{p_d}.
\end{align}
Subtracting $k$ times \eqref{tx1} from \eqref{tx4} yields
\begin{align}\label{tx5}
  0 = d(|T|-1)-(d-k)\frac{n_T(0)}{p_0}-\frac{n_T(d)}{p_d}.
\end{align}
This has to hold for every $k=1,\dots,d$.
If $d\ge2$, this is a contradiction, since $n_T(0)>0$ for every tree $T$.

It remains only to consider the case when $\cD(T)$ has no element $d$ with
$d\ge2$, i.e., when $\cD(T)\subseteq\set{0,1}$.
In this case, $|T|$ is a path, so $n_{T}(0)=1$, $n_{T}(1)=|T|-1$ and $n_{T}(i) = 0$ for $i \geq 2$. We still have \eqref{tx5} with $k=d=1$, which gives
\begin{align}\label{tx6}
|T|-1 = \frac{n_T(1)}{p_1}  
=\frac{|T|-1}{p_1}.
\end{align}
Hence, $p_1=1$. Since $\sum_{i \geq 0}p_i=1$, this implies $p_0=0$, which contradicts
$\pip(T)>0$ because $n_{T}(0)>0$. 

These contradictions complete the proof.
\end{proof}

\begin{proof}[Proof of \refT{TheoCLT2}]
The proof is very similar to the proof of \refT{TheoCLT}, and we 
focus on the differences.
We may again assume that $T_1,\dots,T_m$ are distinct,
and we define again
$\mu_{i \kappa} \coloneqq \mu_\bnk(T_i)=|\mathbf{n}_{\kappa}|
\pi_{\mathbf{p}(\mathbf{n}_{\kappa})} (T_{i})$ for $1\le i\le m$.
Furthermore,
by \eqref{ta1}, $\E N_{T_j}(\Tnk)=\mu_\bnk(T_j)+O(1)$ 
for $1 \leq j \leq m$, and thus it does not matter whether we use
$\E N_{T_j}(\Tnk)$ or $\mu_{i\kk}=\mu_\bnk(T_j)$ in \eqref{eq77}. 

We now assume that $q_{1\kappa}, \dots, q_{m \kappa} \in \mathbb{N}_{0}$
are such that for some fixed constant $C >0$,
\begin{align}\label{gw5}
q_{i \kappa} \le C \mu_{i\kk}\qq, \qquad \text{for} \hspace*{2mm} 1 \leq i \leq m
.\end{align}
By \eqref{ta3}, we then have
\begin{align}
  \label{gw7}
q_{j\kk}=O\bigpar{\mu_{j\kk}\qq}=O\bigpar{n_\kk(i)\qq},
\qquad \text{for} \hspace*{2mm} 
i\in\cD(T_j), 
\end{align}
and in particular, $q_{j\kk}=O\bigpar{|\bnk|\qq}$.
Furthermore, by assumption $\mu_{j\kk}\to\infty$, and thus \eqref{ta3}
yields
$n_\kk(i)\to\infty$ for every $i\in\bigcup_{j=1}^{m}\cD(T_j)$.
Recall also the definition of $\tau_{ij}$ in (\ref{ea3}), for $1 \leq i,j \leq
m$. For $1 \leq j,k \leq m$, 
note that if $\tau_{jk}>0$, then $T_j$ is a fringe subtree of $T_k$; hence
$n_{T_j}(i) \le n_{T_k}(i)$ for every $i \geq 0$, and
thus, by \eqref{mu},
$\mu_{k\kk}\le\mu_{j\kk}$.
Hence, by \eqref{gw5},
if $\tau_{jk}>0$, then
$q_{k\kk} = O\bigpar{\mu_{k\kk}\qq} = O\bigpar{\mu_{j\kk}\qq}$.
Consequently, for every $1 \leq j\le m$,
\begin{align}\label{gw6}
 \sum_{k=1}^m\tau_{jk}q_{k\kk}  = O\bigpar{\mu_{j\kk}\qq}.
\end{align}

We now argue as in the proof of \refT{TheoCLT}.
It is easily checked that
all calculations in \eqref{er1}--\eqref{gw8} are valid in the present
situation too, using \eqref{gw5}--\eqref{gw6}
instead of \eqref{ju2}--\eqref{ju4}.

We then use \eqref{gw8} together with
\eqref{gw3} and \eqref{mu} and obtain
\begin{align}\label{gw9}
  \tr_\kk(j,k)|\bnk|
=\hgamma_\bpnk(T_j,T_k)
  \bigpar{\mu_\bnk(T_j)\mu_\bnk(T_k)}\qq-\gd_{jk}\mu_\bnk(T_j)
=\hgamma_\bpnk(T_j,T_k) \xpar{\mu_{j\kk}\mu_{k\kk}}\qq-\gd_{jk}\mu_{j\kk}
.\end{align}
As $\kappa \rightarrow \infty$, 
we have $\hgamma_\bpnk(T_j,T_k)\to\hgamma_\bp(T_j,T_k)$
by Condition \ref{Condition1}
and the continuity of $\hgamma_\bp(T_j,T_k)$ in $\bp$.
Hence, if we now define $\gs_{j\kk}:=\mu_{j\kk}\qq$, for $1 \leq j \leq m$,
then \eqref{eb2} and 
\eqref{gw9} yield
\begin{align}\label{gw10}
  \E\bigsqpar{(N_{T_{1}}(\mathcal{T}_{\mathbf{n}}))_{q_{1 \kappa}}
\dotsm (N_{T_{m}}(\mathcal{T}_{\mathbf{n}}))_{q_{m  \kappa}}}
=\prod_{j=1}^{m} \mu_{j\kappa}^{q_{j \kappa}}
 \exp \left( \frac{1}{2} \sum_{j,k=1}^{m} 
\frac{\xpar{\hgamma_\bp(T_j,T_k)\gs_{j\kk}\gs_{k\kk}-\gd_{jk}\mu_{j\kk}} }{\mu_{j\kappa} \mu_{k\kappa} }q_{j \kappa}q_{k \kappa}+o(1) 
\right),
\end{align}
uniformly in all $q_{1\kk},\dots,q_{m\kk}$ that satisfy \eqref{gw5}.
Since $\mu_{j\kk}/\gs_{j\kk}=\mu_{j\kk}\qq$ for $1 \leq j \leq m$, this is precisely the condition
\eqref{tgw2} in \refT{TGW}.
Moreover, $\mu_{j\kk}=\gs_{j\kk}^2$ and $\mu_{j\kk}\to\infty$ by assumption;
thus \eqref{tgw1} holds too.
Hence, \refT{TGW} applies and yields \eqref{eq77}.

The final claim follows by \refL{L+2} below. 
\end{proof}

\begin{lemma}\label{L+2}
Suppose that $\bp=(p_i)_{i \geq 0}\in\ppi$, and 
let\/ $T\in\bbT$. Then $\hgamma_\bp(T,T)>0$,
except in the three exceptional cases \ref{exc1}, \ref{exc2} and \ref{exc3}
of \refT{TheoCLT2}.
\end{lemma}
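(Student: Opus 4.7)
The plan is to split the proof according to whether $\pi_\bp(T)>0$ or $\pi_\bp(T)=0$, and within each case to use the explicit formulas \eqref{gw20} and \eqref{gw2} for $\hgamma_\bp(T,T)$. The main obstacle will be the case $\pi_\bp(T)=0$ with $|T|>1$, where the continuous extension of $\eta$ produces a sum of terms involving the convention $0\cdot\infty=0$, and we must carefully identify when a single surviving term equals exactly $1$.

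First I would dispose of $|T|=1$ directly. Here $\cD(T)=\{0\}$ and $n_T(0)=1$, so the formula \eqref{gw2} gives $\heta_\bp(T,T)=-1$ (regardless of whether $p_0>0$ or not, by the $0\cdot\infty=0$ convention), hence $\hgamma_\bp(T,T)=0$; this is exception \ref{exc1}. Next, when $|T|>1$ and $\pi_\bp(T)>0$, the relation $\hgamma_\bp(T,T)=\gamma_\bp(T,T)/\pi_\bp(T)$ (from \eqref{gw3}) combined with \refL{L+1} immediately gives $\hgamma_\bp(T,T)>0$, with no new work.

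The substance of the proof is the case $|T|>1$ and $\pi_\bp(T)=0$. Let $Z:=\{i\in\cD(T): p_i=0\}$, which is nonempty. I would examine the sum in \eqref{gw2}: the $i$-th summand is $n_T(i)^2\prod_{j\in\cD(T)}p_j^{n_T(j)-\gdx{i}{j}}$. For $j\in Z\setminus\{i\}$, the exponent $n_T(j)\ge1$ forces the term to vanish; for $j=i\in Z$, the exponent $n_T(i)-1$ forces vanishing unless $n_T(i)=1$. Therefore the entire sum is zero (giving $\hgamma_\bp(T,T)=1>0$) unless $Z=\{i^\star\}$ for a single $i^\star\in\cD(T)$ with $n_T(i^\star)=1$, in which case only the $i=i^\star$ term survives and the sum equals $\prod_{j\in\cD(T)\setminus\{i^\star\}}p_j^{n_T(j)}$. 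Since each factor is in $[0,1]$, this product is at most $1$, so $\heta_\bp(T,T)\ge -1$ and $\hgamma_\bp(T,T)=1+\heta_\bp(T,T)\ge 0$, with equality iff $p_j=1$ for every $j\in\cD(T)\setminus\{i^\star\}$.

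Finally, I would translate the latter equality condition into the tree structure. Since $\sum_j p_j\le 1$ and $p_{i^\star}=0$, the set $\cD(T)\setminus\{i^\star\}$ must consist of a single degree $j^\star$ with $p_{j^\star}=1$; it cannot be empty because a tree with $|T|>1$ and $\cD(T)$ a singleton is impossible (every tree has a leaf, so $0\in\cD(T)$, and if $\cD(T)=\{0\}$ then $|T|=1$). Because $0\in\cD(T)$ always, either $i^\star=0$ or $j^\star=0$. In the first case $p_0=0$, $n_T(0)=1$, and all non-leaf vertices have the common degree $j^\star$; I would observe that $n_T(0)=1$ together with every internal vertex having $j^\star$ children forces $j^\star=1$ (else the tree would have $\ge 2$ leaves), so $T$ is a path and $p_1=1$, giving \ref{exc2}. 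In the second case $p_0=1$ and the unique non-leaf vertex has degree $i^\star\ge1$; since $|T|>1$ the root cannot be a leaf, so the root is this unique internal vertex and its $i^\star$ children are all leaves, so $T$ is a star with $d=i^\star$, giving \ref{exc3}. Conversely, a direct substitution into \eqref{gw2} in each of these two configurations yields $\heta_\bp(T,T)=-1$ and hence $\hgamma_\bp(T,T)=0$, confirming that cases \ref{exc2} and \ref{exc3} are genuinely exceptional.
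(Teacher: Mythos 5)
Your proof is correct and follows essentially the same route as the paper's: dispose of $\pi_\bp(T)>0$ via Lemma~\ref{L+1}, then analyse the sum in \eqref{gw2} when $\pi_\bp(T)=0$ to show it vanishes unless a unique degree $i^\star$ has $p_{i^\star}=0$ and $n_T(i^\star)=1$, and finally translate the equality condition $\hgamma_\bp(T,T)=0$ into the path/star dichotomy. The only (harmless) differences are that you treat $|T|=1$ by direct computation up front and additionally verify the converse for cases \ref{exc2} and \ref{exc3}, which the paper leaves implicit.
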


\begin{proof}
  If $\pip(T)>0$, then the result follows by \refL{L+1}.
Thus suppose $\pip(T)=0$.
Then, by \eqref{gw2},
\begin{align}
  \label{uj1}
\heta_\bp(T,T)=-\sum_{i\in\cD(T)}n_T(i)^2\prod_{j\in\cD(T)} p_j^{n_T(j)-\gd_{ij}}.
\end{align}
Since $\pip(T)=0$, there exists at least one $i_0\in\cD(T)$ with $p_{i_0}=0$.
Fix one such $i_0$.
Then each product in \eqref{uj1} with $i\neq i_0$ vanishes because it
contains the factor $p_{i_0}^{n_T(i_0)}=0$.
Hence, 
\begin{align}
  \label{uj2}
\heta_\bp(T,T)=-n_T(i_0)^2\prod_{j\in\cD(T)} p_j^{n_T(j)-\gd_{i_0j}}
.\end{align}
If $n_T(i_0)\ge2$, then \eqref{uj2} yields $\heta_\bp(T,T)=0$, and thus
$\hgamma_\bp(T,T)=1$  by \eqref{gw20}.
In the remaining case, $n_T(i_0)=1$; thus \eqref{gw20}  and \eqref{uj2} yield
\begin{align}
  \label{uj3}
\hgamma_\bp(T,T)=1+\heta_\bp(T,T)=1-\prod_{j\in\cD(T), j\neq i_0} p_j^{n_T(j)}
.\end{align}
Consequently, if $\hgamma_\bp(T,T)=0$, then $p_j=1$ for every $j\in\cD(T)$ with
$j\neq i_0$.
Obviously, there is at most one such $j$, and thus $|\cD(T)|\le2$.

We always have $0\in\cD(T)$, and thus either $|T|=1$ (case \ref{exc1}),
or
$\cD(T)=\set{0,d}$ for some $d\ge1$.
In the latter case, we have either $i_0=0$ or $i_0=d$. 

If $i_0=0$, then, as shown above, $n_{T}(0)=1$, so $T$ has only one leaf and
thus $T$ is a path. Then $\cD(T)=\set{0,1}$ and we need $p_1=1$; this is
case \ref{exc2}. 

If $i_0=d$, then  $n_{T}(d)=1$.
Thus, $T$ has only one non-leaf, so $T$ is a star where the root has degree
$d$; furthermore, $p_0=1$.
This is \ref{exc3}.
\end{proof}

While the exceptional case \ref{exc1} in \refT{TheoCLT2} (and \refL{L+2})
is completely trivial,  with $N_T(\cT_{\bnk})$ deterministic, 
the cases \ref{exc2} and \ref{exc3} are not.
We illustrate this with a simple example, which shows that in some such
cases $N_T(\cT_\bnk)$ is still asymptotically normal, but not in all cases.
                                                                 
\begin{example}\label{Eexc}
Let $T$ be the tree with $|T|=2$; thus $T$ consists of a root and a leaf,
and $n_T(0)=n_T(1)=1$.
Note that $T$ is an example of both exceptional cases \ref{exc2} (if $p_1=1$)
and \ref{exc3} (if $p_0=1$).

We consider for simplicity only degree statistics $\bnk$ such that $\Tnk$
has exactly one vertex of degree $\ge2$; it then follows from \eqref{eq30} that
this degree equals $n_\kk(0)$, and thus the degree statistic $\bnk$ has
$n_\kk(0)\ge2$, $n_\kk(1)\ge0$, and $n_\kk(i)=\gd_{i,n_\kk(0)}$, for $i\ge2$.
For such $\bnk$, the tree $\Tnk$ consists of a vertex, $v$ say, of
degree $n_\kk(0)$, $n_\kk(0)$ paths from $v$ to the leaves, and
a path (which might be empty) from the root to $v$.
Let $X_0\ge0$ be the number of vertices on the path from the root to $v$
(thus $X_0=0$ if $v$ is the root),
and let $X_i\ge0$ be the number of vertices of degree 1 on the $i$-th path from
$v$ to a leaf.
Then,
\begin{align}\label{kk1}
\sum_{i=0}^{n_\kk(0)}X_i = \nx(1),
\end{align}
 and there is a bijection between 
such vectors  $(X_i)_{i=0}^{\nx(0)}\in\bbNo^{\nx(0)+1}$ 
and possible trees $\Tnk$.
Vectors  $(X_i)_{i=0}^{\nx(0)}\in\bbNo^{\nx(0)+1}$ satisfying \eqref{kk1}
are called compositions  of $\nx(1)$ in $\nx(0)+1$ parts.
Hence, the random tree $\Tnk$ 
corresponds to a uniformly random composition  of $\nx(1)$ in $\nx(0)+1$ parts.
As is well known, 
see \cite[Example 1.3.3]{Kolchin1986}, \cite{Holst1979}, 
and e.g.\ \cite[Example 12.2]{Svante2012}, such a random composition can be
obtained by choosing any $p\in(0,1)$ and then letting $X_i\sim\Ge(p)$ be
independent, and condition on the event that \eqref{kk1} holds.

There is one fringe subtree $T$ in each path to a leaf
for which $X_i\ge1$; thus
we obtain, with $x_+:=\max(x,0)$,
\begin{align}\label{kk2}
  N_T(\Tnk)\eqd
\lrpar{\sum_{i=1}^{\nx(0)}\indic{X_i\ge1} \biggm| \sum_{i=0}^{\nx(0)}  X_i=\nx(1)}
= \nx(1)-\lrpar{\sum_{i=0}^{\nx(0)}(X_i-1)_+ +\indic{\{X_0\ge1\}}
\biggm| \sum_{i=0}^{\nx(0)}  X_i=\nx(1)}.
\end{align}

Consider now the case when $\nx(0)\to\infty$ and $\nx(1)=o(\nx(0))$, as $\kappa \rightarrow \infty$.
This implies that \refCond{Condition1} holds, with $p_0=1$.
By symmetry, $\E\bigsqpar{X_0\mid \sum_{i=0}^{\nx(0)}  X_i=\nx(1)}
=\nx(1)/(\nx(0)+1)\to0$, and thus we may ignore the term $\indic{\{X_0\ge1\}}$
in \eqref{kk2}.

For example, suppose that $\nx(1)/\sqrt{\nx(0)}\to\gl\in(0,\infty)$.
It is then easy to see that
$\nx(1)-N_T(\Tnk)\dto\Po(\gl^2)$ with a Poisson
limit distribution; see \cite[Example 5]{Holst1979}.
Moreover, by the methods in \cite{Holst1979}, see also
\cite[Theorem 2.1]{Janson2001},
it follows easily that
if
$\nx(0)\qq\ll\nx(1)\ll\nx(0)$, then $N_T(\Tnk)$ is asymptotically normal;
the variance is $\sim \nx(1)^2/\nx(0)$.
Conversely, it is  easy to see that if $\nx(1)\ll\nx(0)\qq$,
then $\P(N_T(\Tnk)=\nx(1))\to1$, so the distribution is asymptotically
degenerate. 

It is interesting to note that if $\nx(0)^{3/4}\ll\nx(1)\ll\nx(0)$,
then the asymptotic normality of $N_T(\Tnk)$ can easily be proved by the
Gao--Wormald theorem,
similarly to \refT{TheoCLT2}, using Lemmas \ref{LemC}\ref{LemC1} and
\ref{L0}. 
(With
 $\mu_{\kk}=|\nx|p_0(\nx)p_1(\nx)$ and
$\gs_{\kk}=p_1(\nx)|\nx|^{1/2}=\nx(1)/|\nx|\qq$.) 
However, we do not see how to use this method to prove the full range of
asymptotic normality in this example.

We have here concentrated on case \ref{exc3}, i.e., $p_0=1$.
By similar arguments, one can also study the case $\nx(1)/\nx(0)\to\infty$, 
when \refCond{Condition1} holds with $p_1=1$, and we are in the exceptional case
\ref{exc2}. Again, normal, Poisson and degenerate limits occur for various
ranges;
we leave the details to the reader.
\end{example}

\refE{Eexc} treats only a simple example, and we leave the general case
 as an open problem.

\begin{problem}
Find criteria for asymptotic normality of $N_T(\Tnk)$ in the exceptional
cases \ref{exc2} and \ref{exc3} in \refT{TheoCLT2}.
When is there a Poisson limit?
Are there  any other possible non-degenerate limit distributions?
\end{problem}

\section{Application to labelled trees with given vertex degrees}
\label{Slabelled}

For $n \in \mathbb{N}$, let $\mathbb{T}^{\rm lab}_{n}$ be the set of
unordered rooted trees with $n$ vertices labelled by $\{1, \dots, n\}$.
(I.e., the labelled rooted trees of size $n$.)
We use the notations above for such trees too, \emph{mutatis mutandis}. 
In particular,  for a tree $T\in \mathbb{T}^{\rm lab}_{n}$ and a vertex $i\in T$, 
$d_{T}(i)$ is the
(out)degree of $i \in T$. 
We define the degree sequence of $T$ 
as the sequence $\bd_T = (d_{T}(i))_{i=1}^n$.

Let
$\bbD_n:=\set{\bd_T:T\in\bbTlab_n}$ 
be the set
of degree sequences of labelled trees of size $n$;
if $\bd\in\bbD_n$, we say that $\bd$ is a
\emph{degree sequence of length $n$}.
Note that 
$\bbD_n:=\bigset{\bd=(d_i)_{i=1}^n\in\bbNo^n:\sum_{i=1}^n d_i=n-1}$.
We further let $\bbD:=\bigcup_{n\ge1}\bbD_n$, the set of all degree
sequences.
If $\bd$ is a degree sequence, we write $|\bd|=n$ if $\bd\in\bbD_n$; 
we then say that $n$ is the \emph{length} of $\bd$.
We also define the degree statistic $\bn_\bd=(n_\bd(i))_{i\ge0}$,
where $n_\bd(i):=|\set{v\in\set{1,\dots,n}:d_v=i}|$.
Note that $|\bn_\bd|=|\bd|$.

For a degree sequence $\bd$, let 
$\bbTlab_{\bd}$ be the set of labelled trees $T\in\bbTlab_{|\bd|}$ that have
degree sequence $\bd$.
We let $\cTlab_\bd$ be a uniformly random element of $\bbTlab_\bd$, i.e., a
uniformly random labelled tree with degree sequence $\bd$; we denote this by
$\cTlab_\bd \sim {\rm Unif}(\bbTlab_\bd)$.

Although the random trees $\cT_\bn$ (for a degree statistics $\bn$)
and $\cTlab_\bd$ (for a degree sequence $\bd$)
are different types of
trees, it is well known that they are closely related and for many purposes
equivalent. We state one version of this as a lemma.

\begin{lemma}\label{Llab}
Let\/ $\bd$ be a degree sequence, let\/ $\bn_\bd$ be the corresponding degree
statistic, and let\/
$\mathcal{T}_{\bn_\bd} \sim {\rm Unif}(\mathbb{T}_{\bn_\bd})$.
We may construct\/ $\cTlab_\bd \sim {\rm Unif}(\bbTlab_\bd)$
as follows: 
randomly label the vertices of\/ $\cT_{\bn_\bd}$ such that the tree has degree
sequence $\bd$, and then ignore the ordering.
\end{lemma}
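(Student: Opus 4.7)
The proof I would write is a direct counting argument, showing that the construction gives each labelled unordered tree in $\bbTlab_\bd$ the same probability. The key observation is that the auxiliary probability space consists of pairs $(T,\sigma)$ where $T\in\bbT_{\bn_\bd}$ is a plane tree and $\sigma$ is a degree-respecting labelling of its vertices, and that the projection to $\bbTlab_\bd$ has constant fiber size.

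First I would set up the probability of each pair $(T,\sigma)$. For a fixed plane tree $T\in\bbT_{\bn_\bd}$, a valid labelling $\sigma$ is a bijection from the vertices of $T$ onto $\{1,\dots,n\}$ such that the vertex receiving label $i$ has degree $d_i$. Since $T$ has exactly $n_\bd(k)$ vertices of degree $k$ (matching the number of indices $i$ with $d_i=k$), the number of valid labellings of $T$ equals $\prod_{k\ge0}n_\bd(k)!$, independently of $T$. Hence the construction assigns each pair $(T,\sigma)$ probability
\begin{align*}
\frac{1}{|\bbT_{\bn_\bd}|\cdot\prod_{k\ge0}n_\bd(k)!}.
\end{align*}

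Next I would count, for a fixed $T'\in\bbTlab_\bd$, how many pairs $(T,\sigma)$ map to $T'$ when the plane ordering is forgotten. Each such pair corresponds to a choice of a linear order on the children of every vertex of $T'$ (after which the unlabelled plane structure $T$ and the labelling $\sigma$ are both determined by reading off vertex positions and their labels). The number of such orderings is $\prod_{v\in T'}d_{T'}(v)!=\prod_{i=1}^n d_i!$, which depends only on $\bd$ and not on $T'$. Therefore the probability of obtaining any particular $T'\in\bbTlab_\bd$ is
\begin{align*}
\frac{\prod_{i=1}^n d_i!}{|\bbT_{\bn_\bd}|\cdot\prod_{k\ge0}n_\bd(k)!},
\end{align*}
a constant independent of $T'$, which proves uniformity on $\bbTlab_\bd$.

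There is no real obstacle here; the only point requiring care is the identification of the fiber of the projection $(T,\sigma)\mapsto T'$ with the set of child-orderings at each vertex of $T'$, which is a standard bijection but should be spelled out. As a sanity check, comparing total mass gives the classical formula $|\bbTlab_\bd|=(n-1)!/\prod_{i=1}^n d_i!$ via the known size $|\bbT_{\bn_\bd}|=(n-1)!/\prod_{k\ge0}n_\bd(k)!$ from \eqref{sw4}, which confirms the count.
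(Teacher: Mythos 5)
Your proof is correct and follows essentially the same route as the paper: both arguments rest on the two constant counts, namely that every plane tree in $\bbT_{\bn_\bd}$ admits exactly $\prod_{k\ge0}n_\bd(k)!$ degree-respecting labellings, and that every labelled unordered tree with degree sequence $\bd$ lifts to exactly $\prod_{i=1}^n d_i!$ labelled ordered trees. Your pairs $(T,\sigma)$ are precisely the paper's intermediary labelled ordered trees, so the two write-ups differ only in presentation.
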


\begin{proof}
Let $\ocTlabd$ be the intermediary labelled ordered random tree.
The tree $\cT_{\bn_\bd}$ may be labelled in exactly  $\prod_{i\ge0}n_\bd(i)!$
ways to have the specified degree sequence $\bd$. Since this number is
constant (for a given $\bd$), $\ocTlabd$ is uniformly distributed over all
labelled ordered trees with degree sequence $\bd$.
Similarly, each labelled unordered tree with degree sequence
$\bd=(d_i)_{i=1}^n$ can be 
ordered in $\prod_{i=1}^n d_i!$  ways; again this number is constant, and
thus the tree obtained from $\ocTlabd$ by forgetting the ordering 
is uniformly distributed on $\bbTlab_\bd$.
\end{proof}

For a tree $T\in\bbTlab_n$ and a vertex $v\in T$, we define the fringe tree
$T_v$ as before. We ignore the labels on $T_v$; thus, $T_v$ is regarded as
an unordered unlabelled rooted tree.
Let $\bbTun$ be the set of unordered unlabelled rooted trees.
If $T\in\bbTlab_n$ and $T'\in\bbTun$, let as before $N_{T'}(T)$ 
be
the number of fringe trees of $T$ that are equal (i.e., isomorphic to) $T'$;
this is again given by \eqref{eq2}.

For a tree $T\in\bbTun$, let $\Ord(T)$ be the set of 
ordered trees $\Tord\in\bbT$ that reduce to $T$ if we ignore the ordering.
It follows from the construction in \refL{Llab} that for any degree sequence
$\bd$ and tree $T\in\bbTun$, 
\begin{align}\label{lab1}
  N_{T}(\cTlab_\bd)=\sum_{\Tord\in\Ord(T)}N_{\Tord}(\cT_{\bn_\bd}).
\end{align}
Versions for random labelled trees $\bbTlab_{\bd}$ 
of Theorems \ref{TheoLLN}, \ref{TheoCLT}, \ref{TA} and \ref{TheoCLT2} now
follow as a consequence of \eqref{lab1}.
We state only the two first of these in detail, and leave the others to the
reader. We first need some notation.

Note that the definitions in \refS{S:Intro} of 
$\pi_\bp(T)$,
$\eta_\bp(T,T')$,
$\gamma_\bp(T,T')$,
$\heta_\bp(T,T')$, and
$\hgamma_\bp(T,T')$
use only the degree statistics and not the orderings;
these quantities are thus well defined also for unordered trees
$T,T'\in\bbTun$;
moreover, they have the same value as if we give the trees any orderings.
Recall also that $\cTp$ is a Galton--Watson tree with offspring distribution
$\bp$; we let $\cTun_\bp$ denote this Galton--Watson tree regarded as an
unordered tree in $\bbTun$.
In analogy with \eqref{pip} we define, for $T\in\bbTun$,
\begin{align}\label{lab11}
  \piun_\bp(T):=\P\bigpar{\cTun_\bp=T}=\ord{T}\pi_\bp(T).
\end{align}
Furthermore, in analogy with \eqref{eq20}--\eqref{eq21},
for $T,T'\in\bbTun$,
\begin{align} 
\gamma\un_{\mathbf{p}}(T, T) &  
:= \piun_{\mathbf{p}}(T) + \eta_{\mathbf{p}}(T,T)(\pi\un_{\mathbf{p}}(T))^{2}, 
\label{lab20}\\
\gamma\un_{\mathbf{p}}(T, T') & 
:= N_{T'}(T)  \piun_{\mathbf{p}}(T) +  N_{T}(T') \piun_{\mathbf{p}}(T') 
+\eta_{\mathbf{p}}(T,T') \pi\un_{\mathbf{p}}(T) \pi\un_{\mathbf{p}}(T'), 
\qquad T\neq T'. 
\label{lab21}
\end{align}

\begin{theorem} \label{TLLNlab}
  Let $\bd_\kk$, $\kk\ge1$, be some degree sequences such that the
  corresponding degree statistics $\bn_{\bd_\kk}$ satisfy
  \refCond{Condition1}, and let
$\cTlab_{\bd_\kk} \sim {\rm Unif}(\bbTlab_{\bd_\kk})$.
For every fixed $T \in \bbTun$,
as \kktoo:
\begin{enumerate}[label=\upshape(\roman*)]
\item (Annealed version)\quad 
$\displaystyle \mathbb{P}(\cTlabran_{\bd_\kk} = T) 
=
\frac{\E[N_{T}(\cTlab_{\bd_\kk})]}{|\bd_\kk|}
\to \piun_{\mathbf{p}}(T)$. 
\label{AnnealedLab}

\item (Quenched version)\quad 
$\displaystyle \mathbb{P}(\cTlabran_{\bd_\kk} 
= T\mid \cTlab_{\bd_\kk} ) =
  \frac{N_{T}(\cTlab_{\bd_\kk})}{|\bd_\kk|}
  \to \piun_{\mathbf{p}}(T)$ 
in probability. 
\label{QuenchedLab}
\end{enumerate}
In other words, 
the random fringe tree converges in distribution 
as $\kk\to\infty$:
\ref{AnnealedLab} says
$\cTlabran_{\bd_\kk}\dto \cTun_\bp$,
or equivalently $\cL(\cTlabran_{\bnk})\to \cL(\cTun_\bp)$, 
and \ref{QuenchedLab} is the conditional version
$\cL\bigpar{\cTlabran_{\bnk}\mid \cTlab_{\bdk}}\pto \cL(\cTun_\bp)$. 
\end{theorem}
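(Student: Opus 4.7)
The plan is to reduce everything to the ordered-tree statement \refT{TheoLLN} via the identity \eqref{lab1}. By \refL{Llab}, I may realize $\cTlab_{\bdk}$ as the unordered tree obtained from $\cT_{\bn_{\bdk}}$ by picking a random labelling compatible with $\bdk$ and forgetting the planar order; under this coupling, \eqref{lab1} is a pointwise identity of random variables, since each vertex $v$ contributes to $N_T(\cTlab_{\bdk})$ precisely when its ordered fringe subtree lies in $\Ord(T)$. Because $\Ord(T)$ is a finite set depending only on $T\in\bbTun$, and because $\pi_\bp(\Tord)=\pi_\bp(T)$ for each $\Tord\in\Ord(T)$ (the quantity $\pi_\bp$ depends only on the degree statistic via \eqref{pip}, which is preserved by forgetting the order), both claims reduce to summing finitely many copies of the corresponding assertion of \refT{TheoLLN}.

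Concretely, for part \ref{AnnealedLab}, taking expectations in \eqref{lab1} and dividing by $|\bdk|$ yields
\begin{align*}
\frac{\E[N_T(\cTlab_{\bdk})]}{|\bdk|}
= \sum_{\Tord\in\Ord(T)} \frac{\E[N_{\Tord}(\cT_{\bn_{\bdk}})]}{|\bdk|}
\tend \sum_{\Tord\in\Ord(T)} \pi_\bp(\Tord) = \ord{T}\,\pi_\bp(T) = \piun_\bp(T),
\end{align*}
where the convergence is \refT{TheoLLN}\ref{Annealed} applied termwise, and the last equalities use \eqref{lab11}. The first equality in \ref{AnnealedLab} is the labelled analogue of \eqref{sw1} and is immediate from the definition of $\cTlabran_{\bdk}$ as the fringe subtree at a uniformly chosen vertex. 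For part \ref{QuenchedLab}, I apply the same decomposition without taking expectations and invoke the in-probability convergence of \refT{TheoLLN}\ref{Quenched} for each $\Tord\in\Ord(T)$; the finite sum then converges in probability to $\piun_\bp(T)$.

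To conclude the distributional statements $\cTlabran_{\bdk}\dto \cTun_\bp$ and $\cL(\cTlabran_{\bdk}\mid \cTlab_{\bdk})\pto \cL(\cTun_\bp)$, I note that $\piun_\bp$ is a probability distribution on the countable set $\bbTun$: the sets $\Ord(T)$, $T\in\bbTun$, partition $\bbT$, so summing \eqref{lab11} gives $\sum_{T\in\bbTun}\piun_\bp(T)=\sum_{\Tord\in\bbT}\pi_\bp(\Tord)=1$, matching the fact that $\piun_\bp$ is by construction the law of $\cTun_\bp$. Pointwise convergence of probability masses on a discrete space then upgrades to convergence in distribution (e.g., by Scheffé's lemma). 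The proof thus reduces to almost pure bookkeeping; there is no substantial obstacle beyond the ordered case already established in \refT{TheoLLN}, and the only item requiring any care is verifying that \eqref{lab1} is genuinely a pointwise identity under the coupling of \refL{Llab}.
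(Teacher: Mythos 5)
Your proof is correct and follows exactly the paper's route: sum the identity \eqref{lab1} over the finite set $\Ord(T)$, apply \refT{TheoLLN} termwise to each ordered representative, and identify the limit via $\sum_{\Tord\in\Ord(T)}\pi_\bp(\Tord)=\ord{T}\pi_\bp(T)=\piun_\bp(T)$. The additional remarks on upgrading pointwise mass convergence to convergence in distribution are fine but not needed beyond what the paper already records.
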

\begin{proof}
This follows by \refT{TheoLLN} together with \eqref{lab1} since,
for any $T\in\bbTun$, using \eqref{lab11},
\begin{align}\label{lab2}
\sum_{\bT\in\Ord(T)}\pip(\bT)    
&=
\ord{T}\pip(T) 
=\piun(T).
\end{align}
\end{proof}

\begin{theorem}\label{TCLTlab}
  Let $\bd_\kk$, $\kk\ge1$, be some degree sequences such that the
  corresponding degree statistics $\bn_{\bd_\kk}$ satisfy
  \refCond{Condition1}, and let
$\cTlab_{\bd_\kk} \sim {\rm Unif}(\bbTlab_{\bd_\kk})$.
For a fixed $m \geq 1$, let
$T_{1}, \dots, T_{m} \in \bbTun$ be a fixed sequence of
rooted unordered unlabelled trees.
Then, as $\kappa \rightarrow \infty$,
\begin{align} 
\E N_{T_{i}}(\cTlab_{\bd_\kk})  & 
=  \piun_{\mathbf{p}}(T_{i})|\bd_\kk|  + o(|\bd_\kk|), \label{eq6lab}\\
\Var N_{T_{i}}(\cTlab_{\bd_\kk}) & 
=   \gamma\un_\bp(T_i,T_{i})|\bd_\kk| + o(|\bd_\kk|), \label{eq15lab} \\
{\rm Cov}\bigpar{N_{T_{i}}(\cTlab_{\bd_\kk}),
  N_{T_{j}}(\cTlab_{\bd_\kk})} & 
=   \gamma\un_\bp(T_i,T_j)|\bd_\kk| + o(|\bd_\kk|), 
\label{eq13lab}
\end{align}
for $1 \leq i,j \leq m$,
and
\begin{align} \label{eq7lab}
\left(\frac{ N_{T_{1}}(\cTlab_{\bd_\kk}) -
  \E[N_{T_{1}}(\cTlab_{\bd_\kk})]}{\sqrt{|\bd_\kk|}}, 
\dots, 
\frac{ N_{T_{m}}(\cTlab_{\bd_\kk}) -
  \E[N_{T_{m}}(\cTlab_{\bd_\kk})]}{\sqrt{|\bd_\kk|}}
  \right) 
\dto\N(0, \Gamma\un_{\mathbf{p}}),
\end{align}
\noindent where the covariance matrix 
$\Gamma\un_{\mathbf{p}} :=(\gamma\un_{\mathbf{p}}(T_{i},T_{j}))_{i,j=1}^m$. 
Furthermore, in \eqref{eq7lab}, 
we can replace
$\E[N_{T_{i}}(\cTlab_{\bd_\kk})]$ 
by $|\bd_\kk|\,\piun_{\mathbf{p}(\mathbf{n}_{\bd_\kk})}(T_{i})$. 
\end{theorem}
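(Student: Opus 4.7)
The plan is to reduce to \refT{TheoCLT} via the coupling provided by \refL{Llab} and the identity \eqref{lab1}: constructing $\cTlab_{\bd_\kk}$ from $\cT_{\bn_{\bd_\kk}}$ by a random labelling followed by forgetting the plane order, we obtain
\[
N_{T_i}(\cTlab_{\bd_\kk}) = \sum_{\Tord \in \Ord(T_i)} N_{\Tord}(\cT_{\bn_{\bd_\kk}}), \qquad 1 \le i \le m,
\]
as an almost sure identity of random variables on the coupled space. Each $\Ord(T_i)$ is finite, so $\bigcup_{i=1}^{m} \Ord(T_i)$ is a finite collection of plane rooted trees, and \refT{TheoCLT} applied to this collection yields joint asymptotic normality, with explicit means and covariances, for all $N_{\Tord}(\cT_{\bn_{\bd_\kk}})$. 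Since finite linear combinations of jointly asymptotically normal random variables are jointly asymptotically normal, \eqref{eq7lab} will follow once the parameters are identified.

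For the means, $\pi_\bp(\Tord)$ depends only on the degree statistic, so $\pi_\bp(\Tord) = \pi_\bp(T)$ for every $\Tord \in \Ord(T)$; summing over $\Ord(T)$ produces the factor $\ord{T}$ and thus $\piun_\bp(T)$ by \eqref{lab11}, giving \eqref{eq6lab}. The same reasoning justifies the replacement of $\E N_{T_i}(\cTlab_{\bd_\kk})$ by $|\bd_\kk|\,\piun_{\bp(\bn_{\bd_\kk})}(T_i)$ in \eqref{eq7lab}, via \eqref{ta1}.

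For the covariances, the key task is to verify
\[
\sum_{\Tord \in \Ord(T)} \sum_{\Tord' \in \Ord(T')} \gamma_\bp(\Tord, \Tord') = \gamma\un_\bp(T, T'),
\]
which, combined with \eqref{eq13}, yields \eqref{eq13lab} and \eqref{eq15lab}. The crucial identity is
\[
\sum_{\Tord' \in \Ord(T')} N_{\Tord'}(\Tord) = N_{T'}(T), \qquad \Tord \in \Ord(T),
\]
where the right-hand side is the unordered count: each vertex $v$ of $\Tord$ contributes its unique plane fringe subtree $\Tord_v$, which lies in $\Ord(T')$ precisely when the underlying unordered fringe subtree of $T$ at $v$ equals $T'$. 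Using also that $\eta_\bp(\Tord, \Tord') = \eta_\bp(T, T')$ depends only on degree statistics, the double sum splits into the three terms appearing in \eqref{lab21}.

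The main obstacle is the bookkeeping in the case $T = T'$, where the double sum includes diagonal pairs $(\Tord, \Tord)$ for which \refT{TheoCLT} uses \eqref{eq20} in place of \eqref{eq21}. Here one uses that for $\Tord \in \Ord(T)$ the only fringe subtree of $\Tord$ equal to $\Tord$ (as plane trees) sits at the root, so $N_{\Tord}(\Tord) = 1$ and hence $\sum_{\Tord' \in \Ord(T),\, \Tord' \ne \Tord} N_{\Tord'}(\Tord) = 0$. Combining the $\ord{T}$ diagonal contributions $\pi_\bp(T) + \eta_\bp(T,T)\pi_\bp(T)^2$ with the $\ord{T}(\ord{T}-1)$ off-diagonal contributions $\eta_\bp(T,T)\pi_\bp(T)^2$ telescopes to precisely $\piun_\bp(T) + \eta_\bp(T,T)\piun_\bp(T)^2 = \gamma\un_\bp(T,T)$, so no extra terms arise.
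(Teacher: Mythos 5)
Your proposal is correct and follows essentially the same route as the paper: reduce via Lemma~\ref{Llab} and the identity \eqref{lab1} to Theorem~\ref{TheoCLT} applied to the finite collection $\bigcup_i\Ord(T_i)$, and then verify $\sum_{\bT,\bT'}\gamma_\bp(\bT,\bT')=\gamma\un_\bp(T,T')$, with the diagonal case handled exactly as in the paper by the observation that distinct plane trees of equal size cannot be fringe subtrees of one another.
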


\begin{proof}
This follows by \refT{TheoCLT} together with \eqref{lab1}, using
\eqref{lab2}  and the following calculations.
First, for any $T\in\bbTun$,
by \eqref{eq20}--\eqref{eq21},
\eqref{lab11}, 
and the fact that if $\bT,\bT'\in\bbT$ with $|\bT|=|\bT'|$ but
$\bT\neq\bT'$, then $N_{\bT}(\bT')=0$,
\begin{align}\label{lab3}
\sum_{\bT\in\Ord(T),\,\bT'\in\Ord(T)} \gammap(\bT,\bT')&
=
\sum_{\bT\in\Ord(T),\,\bT'\in\Ord(T)} 
\bigpar{\pip(T)\indic{\{\bT=\bT'\}}+\etap(\bT,\bT')\pip(T)^2}
\notag\\&
=\ord{T}\pip(T)+\ord{T}^2\pip(T)^2\etap(T,T)
=\gamma\un_\bp(T,T).
\end{align}
Secondly, 
for $T,T'\in\bbTun$ with $T\neq T'$, we have,
cf.\ \eqref{lab1},
\begin{align}\label{lab5}
\sum_{\bT\in\Ord(T),\,\bT'\in\Ord(T')} N_{\bT'}(\bT)
=\sum_{\bT\in\Ord(T)} N_{T'}(\bT)
=\ord{T} N_{T'}(T)
,\end{align}
and thus, similarly,
by \eqref{eq21}, \eqref{lab11} and \eqref{lab21},
\begin{align}\label{lab4}
\sum_{\bT\in\Ord(T),\,\bT'\in\Ord(T')} \gammap(\bT,\bT')&
=
\sum_{\bT\in\Ord(T),\bT'\in\Ord(T')} 
\bigpar{N_{\bT'}(\bT)\pip(T)+N_{\bT}(\bT')\pip(T')
+\etap(T,T')\pip(T)\pip(T')}
\notag\\&
=\gamma\un_\bp(T,T')
.\end{align}
\end{proof}

  \begin{problem}
Suppose that $T\in\bbTun$ and $\bp\in\ppi$ with $|T|>1$ and $\pip\un(T)>0$.
Is $\gamma\un_\bp(T,T)>0$?
  \end{problem}
Note that an affirmative answer to Problem \ref{ProbGamma} would imply 
a positive answer to this too.

\section{Application to simply generated trees} 
\label{SimplyTrees}

Let $\mathbb{T}_{n}$ denote the (finite) subset of all plane rooted trees of size $n \in \mathbb{N}$. Let $\mathbf{w} = (w_{i})_{i \geq 0}$ be a sequence of non-negative real weights with $w_{0}>0$ and $w_{i} >0$ for at least one $i \geq 2$. For a finite rooted plane tree $T \in \mathbb{T}$, we define the weight of $T$ to be
\begin{align}\label{fi1}
w(T) \coloneqq \prod_{v \in T} w_{d_{T}(v)}
=\prod_{i\ge0}w_i^{n_T(i)}. 
\end{align}
\noindent For $n \in \mathbb{N}$, let $Z_{n}(\mathbf{w}) = \sum_{T \in \mathbb{T}_{n}}w(T)$. If $Z_{n}(\mathbf{w}) >0$, then we define the random tree $\mathcal{T}_{\mathbf{w},n}$ by picking an element of $\mathbb{T}_{n}$ at random with probability proportional to its weight, i.e., 
\begin{align} \label{eq23}
\mathbb{P}(\mathcal{T}_{\mathbf{w},n} = T) = \frac{w(T)}{Z_{n}(\mathbf{w})}, \hspace*{4mm} \text{for} \hspace*{2mm} T \in \mathbb{T}_{n}. 
\end{align}
\noindent The random tree $\mathcal{T}_{\mathbf{w},n}$ is called simply
generated tree of size $n$ and weight sequence $\mathbf{w}$; see
e.g.\ \cite{Drmota2009} and  \cite{Svante2012}. 
If $\mathbf{w}$ is a probability distribution (i.e., $\sum_{i \geq 0} w_{i} = 1$), then $\mathcal{T}_{\mathbf{w},n}$ is a Galton--Watson tree with offspring distribution $\mathbf{w}$ conditioned to have $n$ vertices. 

Let $\Phi_{\mathbf{w}}(z) = \sum_{i \geq 0} w_{i} z^{i}$ be the generating
  function of the weight sequence $\mathbf{w}$, and let $\rho_{\mathbf{w}}
  \in [0, \infty]$ be its radius of convergence. 
For $0 \leq s < \rho_{\mathbf{w}}$, we let
\begin{eqnarray}\label{fi2}
\Psi_{\mathbf{w}}(s) := \frac{s \Phi_{\mathbf{w}}^{\prime}(s) }{\Phi_{\mathbf{w}}(s) } = \frac{\sum_{i \geq 0} i w_{i}s^{i}}{\sum_{i \geq 0} w_{i}s^{i}}. 
\end{eqnarray}
Furthermore, if $\Phi_{\mathbf{w}}(\rho_{\mathbf{w}}) < \infty$, 
we define also $\Psi_{\mathbf{w}}(\rho_{\mathbf{w}})$ by \eqref{fi2};
if
  $\Phi_{\mathbf{w}}(\rho_{\mathbf{w}}) = \infty$ then we define
$\Psi_{\mathbf{w}}(\rho_{\mathbf{w}}) 
:= \lim_{s \uparrow \rho_{\mathbf{w}}} \Psi_{\mathbf{w}}(s)$; the limit
exists by \cite[Lemma  3.1 (i)]{Svante2012}. 
Let $\nu_{\mathbf{w}} :=
  \Psi_{\mathbf{w}}(\rho_{\mathbf{w}}) \in [0, \infty]$, and define 
\begin{align}\label{fi3} 
\tau_{\mathbf{w}} = \begin{cases}
              \rho_{\mathbf{w}} & \text{if }  \nu_{\mathbf{w}} < 1,\\
             \Psi^{-1}_{\mathbf{w}}(1)  & \text{if }  \nu_{\mathbf{w}}\ge1. \\
              \end{cases}
\end{align}
It follows from \cite[Lemma 3.1]{Svante2012} that
\begin{align}\label{sf0}
\rho_\bw>0\iff\nu_\bw>0\iff \tau_\bw>0.  
\end{align}

The following result from \cite{Svante2012} shows that simply generated trees
  satisfy Condition \ref{Condition1} in probability.  

\begin{theorem}[{\cite[Theorem 7.1 and Theorem 7.11]{Svante2012}}] 
\label{SvanteDegree}
Let $\mathbf{w}$ be a sequence of non-negative real weights with $w_{0}>0$ and $w_{i} >0$ for at least one $i \geq 2$. Define 
\begin{align}\label{sf1}
\theta_{i}(\mathbf{w}) = \frac{w_{i} \tau_{\mathbf{w}}^{i}}{\Phi_{\mathbf{w}}(\tau_{\mathbf{w}})}, \hspace*{4mm} \text{for} \hspace*{2mm} i \geq 0.
\end{align}
\noindent Then, $\theta(\mathbf{w}) = (\theta_{i}(\mathbf{w}))_{i \geq 0}$  is
a probability distribution with expectation $\mu_{\mathbf{w}} = \min(1,
\nu_{\mathbf{w}})$ and variance $\sigma^{2}_{\mathbf{w}} = \tau_{\mathbf{w}}
\Psi_{\mathbf{w}}^{\prime}(\tau_{\mathbf{w}}) \in [0, \infty]$. Moreover,
for $n \in \mathbb{N}$ with $Z_{n}(\mathbf{w}) >0$, let
$\mathcal{T}_{\mathbf{w},n}$ be a simply generated tree of size $n$ and
weight sequence $\mathbf{w}$. Then, the (empirical) degree distribution
$\mathbf{p}(\mathbf{n}_{\mathcal{T}_{\mathbf{w},n}})$ of
$\mathcal{T}_{\mathbf{w},n}$ satisfies, for every $i \geq 0$,
$p_{i}(\mathbf{n}_{\mathcal{T}_{\mathbf{w},n}}) \pto
\theta_{i}(\mathbf{w})$, as $n \rightarrow \infty$ (along integers $n$ such
that $Z_{n}(\mathbf{w}) >0$).
\end{theorem}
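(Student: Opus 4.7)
The plan is to decompose the statement into an elementary moment computation for $\theta(\mathbf{w})$ and, for the convergence in probability, a tilting reduction to conditioned Galton--Watson trees followed by a law of large numbers on the Lukasiewicz walk side. The moment computation is a direct calculation from \eqref{sf1}: summing yields $\sum_i\theta_i(\mathbf{w})=\Phi_\mathbf{w}(\tau_\mathbf{w})/\Phi_\mathbf{w}(\tau_\mathbf{w})=1$, so $\theta(\mathbf{w})$ is a probability measure on $\bbNo$; its mean equals $\tau_\mathbf{w}\Phi'_\mathbf{w}(\tau_\mathbf{w})/\Phi_\mathbf{w}(\tau_\mathbf{w})=\Psi_\mathbf{w}(\tau_\mathbf{w})$, which by the case split in \eqref{fi3} is $\nu_\mathbf{w}$ when $\nu_\mathbf{w}<1$ and $1$ when $\nu_\mathbf{w}\ge 1$, hence $\min(1,\nu_\mathbf{w})$; and differentiating $\Psi_\mathbf{w}(s)=s\Phi'_\mathbf{w}(s)/\Phi_\mathbf{w}(s)$ yields the identity $s\Psi'_\mathbf{w}(s)=s^2\Phi''_\mathbf{w}(s)/\Phi_\mathbf{w}(s)+\Psi_\mathbf{w}(s)-\Psi_\mathbf{w}(s)^2$, which evaluated at $s=\tau_\mathbf{w}$ is precisely the variance of $\theta(\mathbf{w})$.

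Next I would carry out a tilting step. For any $s\in(0,\rho_\mathbf{w}]$ with $\Phi_\mathbf{w}(s)<\infty$, define $\tilde w_i:=w_is^i/\Phi_\mathbf{w}(s)$; since $\sum_{v\in T}d_T(v)=|T|-1$ for every $T\in\mathbb{T}$, the weight transforms as $\tilde w(T)=s^{|T|-1}\Phi_\mathbf{w}(s)^{-|T|}w(T)$, a function of $|T|$ alone, and therefore $\mathcal{T}_{\tilde{\mathbf{w}},n}\eqd\mathcal{T}_{\mathbf{w},n}$ in view of \eqref{eq23}. Taking $s=\tau_\mathbf{w}$ gives $\tilde{\mathbf{w}}=\theta(\mathbf{w})$, a genuine probability distribution, so $\mathcal{T}_{\mathbf{w},n}$ is distributed as a Galton--Watson tree with offspring law $\theta(\mathbf{w})$ conditioned on total progeny equal to $n$. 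In the critical regime $\nu_\mathbf{w}\ge 1$, when $\theta(\mathbf{w})$ has mean $1$, I would then use the Lukasiewicz encoding of \refS{Strees}: $W_{\mathcal{T}_{\mathbf{w},n}}$ is distributed as an iid random walk with steps $\xi-1$, $\xi\sim\theta(\mathbf{w})$, conditioned to first hit $-1$ at time $n$. Since $n_{\mathcal{T}_{\mathbf{w},n}}(i)$ equals the number of walk increments equal to $i-1$, the cycle lemma underlying Proposition~\ref{Pro1} reduces the first-passage conditioning to the bridge constraint $\{S_n=-1\}$ up to a uniform cyclic shift that preserves empirical frequencies, and a standard local central limit theorem argument (using $\sigma_\mathbf{w}^2>0$ in the non-degenerate subcase) then yields $p_i(\mathbf{n}_{\mathcal{T}_{\mathbf{w},n}})\pto\theta_i(\mathbf{w})$.

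The main obstacle is the subcritical case $\nu_\mathbf{w}<1$, in which $\theta(\mathbf{w})$ has mean strictly less than $1$ and the conditioning event $\{|\mathcal{T}|=n\}$ is exponentially rare. Here one must invoke the condensation phenomenon for subcritical conditioned Galton--Watson trees: with high probability exactly one vertex has degree $\approx(1-\nu_\mathbf{w})n$, while the remaining $n-1$ vertices form a near-critical population whose empirical degree distribution is close to $\theta(\mathbf{w})$. Translated to the walk picture, this is the classical ``one big jump'' principle for a random walk with negative drift conditioned on a large terminal value. For each fixed $i$, the giant vertex contributes only $O(1/n)$ to $p_i(\mathbf{n}_{\mathcal{T}_{\mathbf{w},n}})$ and hence vanishes in the limit, so the bulk alone drives $p_i\pto\theta_i(\mathbf{w})$. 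The genuinely degenerate situations, namely $\nu_\mathbf{w}=\infty$ and $\sigma_\mathbf{w}^2=\infty$, can be absorbed by truncating $\mathbf{w}$ to its first $N$ coordinates, applying the preceding reduction to each truncation, and letting $N\to\infty$ after $n\to\infty$ by a diagonal argument.
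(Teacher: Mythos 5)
This theorem is not proved in the paper at all: it is quoted from Janson's survey (Theorems 7.1 and 7.11 of \cite{Svante2012}), so the paper's ``proof'' consists of the citation. Your first paragraph (that $\theta(\mathbf{w})$ sums to one and has mean $\min(1,\nu_{\mathbf{w}})$ and variance $\tau_{\mathbf{w}}\Psi'_{\mathbf{w}}(\tau_{\mathbf{w}})$) is a correct computation, except that the degenerate case $\rho_{\mathbf{w}}=0$ (where $\tau_{\mathbf{w}}=0$, $\theta_0(\mathbf{w})=1$, and no tilting to a probability distribution is available) must be treated separately. Your tilting step is exactly the standard equivalence that the paper itself records in Remark~\ref{RGWequiv}, and the critical, finite-positive-variance case of the convergence $p_i(\mathbf{n}_{\mathcal{T}_{\mathbf{w},n}})\pto\theta_i(\mathbf{w})$ via the cycle lemma and a local central limit theorem is sound.

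The final paragraph, however, has genuine gaps. First, the assertion that in the subcritical case ``with high probability exactly one vertex has degree $\approx(1-\nu_{\mathbf{w}})n$'' is not true for arbitrary weight sequences: the single-giant-vertex (one-big-jump) picture is established in \cite{Svante2012} only under regularity assumptions on the tail of $\mathbf{w}$, and counterexamples with non-concentrating maximum degree exist in general; what holds in general is weaker and is essentially part of what Theorem 7.11 of \cite{Svante2012} itself proves, so invoking condensation here is close to circular. Second, the proposed truncation of $\mathbf{w}$ to its first $N$ coordinates to absorb the cases $\nu_{\mathbf{w}}=\infty$ and $\sigma^2_{\mathbf{w}}=\infty$ does not work as stated: truncation changes $\tau_{\mathbf{w}}$, changes $\theta(\mathbf{w})$, and changes the law of the conditioned tree, and you supply no coupling or monotonicity comparison between $\mathcal{T}_{\mathbf{w},n}$ and its truncated counterpart, so no conclusion about the original model follows from the truncated ones. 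Third, the case $\rho_{\mathbf{w}}=0$ is not covered by any branch of your argument. These are precisely the regimes where the cited result does its real work, and a self-contained proof would have to reproduce that analysis (uniform control of the random-walk bridge, and a genuine treatment of the condensation regime) rather than appeal to it informally.
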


Note that if $\rho_{\mathbf{w}} = 0$, then $\theta_{0}(\mathbf{w})=1$ and
$\theta_{i}(\mathbf{w})=0$ for $i \geq 1$; otherwise, $\tau_\bw>0$ and 
\eqref{sf1} shows that $\gth_i(\bw)>0 \iff w_i>0$ for $i\ge0$.

Using \refT{SvanteDegree}, 
we will show that \refT{TheoCLT} implies the following version
for conditioned Galton--Watson trees.
The asymptotic normality \eqref{eq28} was proved in case \ref{case1GW}
by different methods in 
\cite[Corollary 1.8]{Svante2016};
\ref{case2GW} and \ref{case3GW} are new.

\begin{theorem}[partly \cite{Svante2016}] \label{TCorollary1}
Let $\mathbf{w}$ be a sequence of non-negative real weights with $w_{0}>0$
and $w_{i} >0$ for at least one $i \geq 2$. Moreover, for $n \in \mathbb{N}$
with $Z_{n}(\mathbf{w}) >0$, let $\mathcal{T}_{\mathbf{w},n}$ be a simply
generated tree of size $n$ and weight sequence $\mathbf{w}$. For fixed $m
\geq 1$, let $T_{1}, \dots, T_{m} \in \mathbb{T}$ be a fixed sequence of
rooted plane trees. 
Then, as $n \rightarrow \infty$ (along integers $n$ such that
$Z_{n}(\mathbf{w}) >0$), 
\begin{align} \label{eq22}
\left(\frac{ N_{T_{1}}(\mathcal{T}_{\mathbf{w},n}) -
  \E[N_{T_{1}}(\mathcal{T}_{\mathbf{w},n}) \mid
  \mathbf{n}_{\mathcal{T}_{\mathbf{w},n}}]}{\sqrt{n}}, \dots, \frac{
  N_{T_{m}}(\mathcal{T}_{\mathbf{w},n}) -
  \E[N_{T_{m}}(\mathcal{T}_{\mathbf{w},n}) \mid
  \mathbf{n}_{\mathcal{T}_{\mathbf{w},n}}]}{\sqrt{n}}\right) 
\dto {\rm N}(0, \Gamma_{\theta(\mathbf{w})}),  
\end{align}
\noindent where the covariance matrix $\Gamma_{\theta(\mathbf{w})}$ is
defined by \eqref{eq20}--\eqref{eq21}, and for $1 \leq j \leq m$,
\begin{align} \label{eq25}
\E[N_{T_{j}}(\mathcal{T}_{\mathbf{w},n}) \mid
  \mathbf{n}_{\mathcal{T}_{\mathbf{w},n}}] =  \frac{n}{(n)_{|T_{j}|}}
  \prod_{i \geq 0} (n_{\mathcal{T}_{\mathbf{w},n}}(i))_{n_{T_{j}}(i)}.
\end{align}
\noindent Furthermore, suppose that the weight sequence $\mathbf{w}$ satisfies one of the following conditions:
\begin{enumerate}[label=\upshape(\roman*)]
\item $\nu_{\mathbf{w}} \geq 1$ and $\sigma^{2}_{\mathbf{w}} \in (0,\infty)$. \label{case1GW}
\item $\nu_{\mathbf{w}} \geq 1$, $\sigma^{2}_{\mathbf{w}} = \infty$ and
  $\theta(\mathbf{w})$ belongs to the domain of attraction of a stable law
  of index $\alpha \in (1,2]$. 
(The last condition is equivalent to that
there exists a slowly varying function
  $L:\mathbb{R}_{+} \rightarrow \mathbb{R}_{+}$ such that $\sum_{i = 0}^{k}
  i^{2} \theta_{i}(\mathbf{w}) = k^{2-\alpha} L(k)$,
  as $k \rightarrow \infty$
\cite[Theorem XVII.5.2]{FellerII}.) \label{case2GW}

\item $0 < \nu_{\mathbf{w}} < 1$ and $\theta_{i}(\mathbf{w}) = c i^{- \beta} + o(i^{- \beta})$, as $i \rightarrow \infty$, with fixed $c >0$ and $\beta >2$. \label{case3GW}
\end{enumerate}
\noindent Then, as $n \rightarrow \infty$ (along integers $n$ such that $Z_{n}(\mathbf{w}) >0$), 
\begin{align} \label{eq28}
\left(\frac{ N_{T_{1}}(\mathcal{T}_{\mathbf{w},n}) -  n \pi_{\theta(\mathbf{w})}(T_{1})}{\sqrt{n}}, \dots, \frac{ N_{T_{m}}(\mathcal{T}_{\mathbf{w},n}) -  n \pi_{\theta(\mathbf{w})}(T_{m})}{\sqrt{n}}\right) \dto {\rm N}(0, \xGamma_{\theta(\mathbf{w})}),  
\end{align}
\noindent where the covariance matrix 
$\xGamma_{\theta{(\mathbf{w})}} = (\xgamma_{\theta{(\mathbf{w})}}(T_{i},
T_{j}))_{i,j=1}^{m}$ is given by,
for $T,T'\in\bbT$ such that $T\neq T'$, 
\begin{align}\label{fi4} 
\xgamma_{\theta{(\mathbf{w})}}(T, T) & = \pi_{\theta(\mathbf{w})}(T) - \left(2 |T|- 1 + \varsigma_{\mathbf{w}}^{-2} \right) (\pi_{\theta(\mathbf{w})}(T))^{2}, 
\\\label{fi5}
\xgamma_{\theta{(\mathbf{w})}}(T, T') & = N_{T'}(T)  \pi_{\theta{(\mathbf{w})}}(T) +  N_{T}(T') \pi_{\theta{(\mathbf{w})}}(T') - \left (|T|+|T'|-1+ \varsigma_{\mathbf{w}}^{-2} \right)\pi_{\theta{(\mathbf{w})}}(T) \pi_{\theta{(\mathbf{w})}}(T'),
\end{align}
with $\varsigma^{2}_{\mathbf{w}} = \sigma^{2}_{\mathbf{w}}$ in case \ref{case1GW},
and $\varsigma^{2}_{\mathbf{w}} = \infty$ in cases
\ref{case2GW} and \ref{case3GW}. 
\end{theorem}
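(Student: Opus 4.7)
The plan is to exploit the standard fact that, conditionally on the degree statistic $\bn := \bn_{\cT_{\bw,n}}$, the simply generated tree $\cT_{\bw,n}$ is uniformly distributed over $\bbT_{\bn}$; this follows immediately from \eqref{fi1} and \eqref{eq23}, since all trees in $\bbT_\bn$ share the same weight. Equation \eqref{eq25} then follows directly from \refL{lemma1} applied to this conditional distribution. For the quenched CLT \eqref{eq22}, \refT{SvanteDegree} yields $\bp(\bn) \pto \theta(\bw)$, so by a standard subsequence argument, we may pass to a further subsequence along which \refCond{Condition1} holds a.s.\ with limit $\bp = \theta(\bw)$. Applying \refT{TheoCLT} along this subsequence, conditionally on $\bn$, gives the quenched convergence; since the Gaussian limit does not depend on the realization of $\bn$, the unconditional convergence in \eqref{eq22} follows.

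For \eqref{eq28}, I would decompose
\begin{equation*}
N_{T_j}(\cT_{\bw,n}) - n \pi_{\theta(\bw)}(T_j) = \bigpar{N_{T_j}(\cT_{\bw,n}) - \E[N_{T_j}(\cT_{\bw,n}) \mid \bn]} + \bigpar{\E[N_{T_j}(\cT_{\bw,n}) \mid \bn] - n \pi_{\theta(\bw)}(T_j)}.
\end{equation*}
Divided by $\sqrt n$, the first term converges jointly in $j$ to $\N(0, \Gamma_{\theta(\bw)})$ by the already-established quenched CLT \eqref{eq22}; crucially, since this quenched CLT holds conditionally on $\bn$ with a deterministic limit, the first term is asymptotically independent of $\bn$. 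The second term depends only on $\bn$: using \eqref{eq25} together with \refL{L0} and a first-order Taylor expansion of $\pi_\bp(T_j) = \prod_{i \geq 0} p_i^{n_{T_j}(i)}$ around $\bp = \theta(\bw)$, one obtains
\begin{equation*}
\frac{\E[N_{T_j}(\cT_{\bw,n}) \mid \bn] - n \pi_{\theta(\bw)}(T_j)}{\sqrt n} = \pi_{\theta(\bw)}(T_j) \sum_{i \geq 0} \frac{n_{T_j}(i)}{\theta_i(\bw)} \cdot \frac{n(i) - n \theta_i(\bw)}{\sqrt n} + o_p(1).
\end{equation*}

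In each of the three cases \ref{case1GW}--\ref{case3GW}, a CLT for the empirical degree counts of the conditioned Galton--Watson tree (\refT{TheoDegreeS} in \refApp{AppendixB}) provides joint asymptotic normality of $\bigpar{(n(i) - n \theta_i(\bw))/\sqrt n}_{i \geq 0}$, with covariance involving $\theta(\bw)$ and (only in case \ref{case1GW}) an additional correction proportional to $(i-1)(j-1)/\sigma_\bw^2$ arising from the constraint $\sum_{i \ge 0} i\, n(i) = n-1$. Combining the two terms via their asymptotic independence, the limiting covariance in \eqref{eq28} is the sum of $\Gamma_{\theta(\bw)}$ and the contribution from the linear combination of degree-count fluctuations. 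A direct computation using the tree identity $\sum_{i \geq 0}(i-1)n_T(i) = -1$ shows that the terms $\sum_i n_T(i)^2/\theta_i(\bw)$ appearing in $\gamma_{\theta(\bw)}(T,T)$ precisely cancel the analogous terms coming from the degree-count fluctuations, producing the clean expressions \eqref{fi4}--\eqref{fi5}, with $\varsigma^{-2}_\bw = \sigma^{-2}_\bw$ in case \ref{case1GW} and $\varsigma^{-2}_\bw = 0$ in cases \ref{case2GW}--\ref{case3GW}. The main obstacle is the rigorous justification of the asymptotic independence and of the Taylor expansion uniformly over the random $\bn$; this can be handled by the same subsequence argument used for the quenched CLT combined with a concentration estimate for $\bn$ around $n\theta(\bw)$ to control the error terms.
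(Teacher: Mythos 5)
Your proposal is correct and follows essentially the same route as the paper's proof: conditioning on the degree statistic to reduce to \refT{TheoCLT}, expanding the conditional mean \eqref{eq25} via \refL{L0} to a linear functional of the degree-count fluctuations, invoking \refT{TheoDegreeS}, and adding the two asymptotically independent Gaussian contributions. The only (immaterial) differences are that the paper makes the quenched convergence and the Taylor expansion rigorous via the Skorohod coupling theorem rather than a subsequence-plus-concentration argument, and that it separately disposes of the degenerate case $\pi_{\theta(\bw)}(T_j)=0$ (where $N_{T_j}(\cT_{\bw,n})=0$ a.s.).
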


\begin{remark}\label{RGWequiv}
Recall that for any weight sequence $\mathbf{w}$ and any constants $a,b >0$, 
the weight sequence $\hbw = (\hw_{i})_{i \geq 0}$ with $\hw_{i} := ab^{i}w_{i}$ is 
equivalent to $\mathbf{w}$, i.e., it satisfies that 
$\mathcal{T}_{\mathbf{w},n} \stackrel{\rm d}{=} \mathcal{T}_{\hbw,n}$, 
for all $n$ for which either (and thus both) of the random trees are defined; 
this is a consequence of (\ref{eq23}). In the setting of 
Theorem \ref{SvanteDegree}, 
if $\rho_{\mathbf{w}} > 0$, then the weight sequence $\mathbf{w}$ is
equivalent to the weight sequence 
$\theta(\mathbf{w}) = (\theta_{i}(\mathbf{w}), i \geq 0)$, which is a  
probability distribution with mean $\mu_{\mathbf{w}}=\min(1,\nu_{\bw})$; 
see further \cite[Section 7]{Svante2012}. Thus, if $\rho_{\mathbf{w}} > 0$ we can 
regard $\mathcal{T}_{\mathbf{w},n}$ as a Galton--Watson tree 
$\mathcal{T}_{\theta(\mathbf{w}),n}$ with offspring distribution
$\theta(\mathbf{w})$ conditioned to have $n$ vertices. 
This explains the appearance of $\theta(\bw)$ in \refT{TCorollary1},
and it shows that there is no real loss of generality to consider (as is
often done) only the case $\tau_\bw=1$ when $\gth(\bw)=\bw$.
Note that the
conditioned Galton--Watson tree $\mathcal{T}_{\theta(\mathbf{w}),n}$ 
is critical if $\nu_{\mathbf{w}} \geq 1$, 
and subcritical if $0 < \nu_{\mathbf{w}} < 1$. 
\end{remark}

We postpone the proof of \refT{TCorollary1}.
A central idea is to obtain the unconditional limit \eqref{eq28} by
combining the conditional limit \eqref{eq22} with a limit result for the
conditional expectations in \eqref{eq25}.
For this, we will use the following theorem on asymptotic normality of the 
degree statistics, which is proved in \cite{Svante2016} and \cite{Paul2020}.
To be more precise, 
case \ref{case1GW} is shown in \cite[Example 2.2]{Svante2016},
while  cases \ref{case2GW} and \ref{case3GW} are shown in 
\cite[Theorems 6.2 and 6.7]{Paul2020} 
(although the asymptotic (co)variances are not explicitly given
in \cite[Theorem 6.2]{Paul2020}). 
Moreover, the approach used in the present paper allows us
to give a different (and simpler) proof of 
\refT{TheoDegreeS}  in cases
\ref{case1GW} and \ref{case2GW}, using
the multidimensional version of the Gao--Wormald theorem (Theorem \ref{TGW});
we give this  proof 
in Appendix \ref{AppendixB}.

\begin{remark}
In case \ref{case1GW}, i.e.\ $\nu_{\mathbf{w}} \geq 1$ and
$\sigma^{2}_{\mathbf{w}} \in (0,\infty)$, 
the univariate version of Theorem \ref{TheoDegreeS} was first proved by
Kolchin \cite[Theorem 2.3.1]{Kolchin1986};
the multivariate result \eqref{eq28} was proved 
in general in \cite{Svante2016} as said above, and
earlier under extra moment assumptions
on $\theta{(\mathbf{w})}$ 
by Janson \cite{Janson2001} (assuming a third moment),
Minami \cite{Minami2005} and Drmota \cite[Section 3.2.1]{Drmota2009} 
(both assuming an exponential moment), using different proofs. 
\end{remark}

\begin{remark}
If $\nu_{\mathbf{w}} \geq 1$ and $\sigma^{2}_{\mathbf{w}} \in (0,\infty)$ 
as in \refT{TCorollary1}\ref{case1GW}, 
then the offspring distribution $\theta(\mathbf{w})$ 
is critical (i.e., has mean 1) with finite variance. 
This is the framework assumed in \cite{Svante2016}, and 
as said above, in this case, \eqref{eq28} is proved in
\cite[Corollary 1.8]{Svante2016}. 
Our proof uses \refT{TheoDegreeS}, which in this case also is a result from
\cite{Svante2016}. However, note that our proof is quite different; 
we use \refT{TheoDegreeS} to prove \eqref{eq28}, while \cite{Svante2016}
essentially does the opposite.
\end{remark}

\begin{theorem}[\cite{Svante2016} and \cite{Paul2020}]
 \label{TheoDegreeS}
Let $\mathbf{w}$ and $\mathcal{T}_{\mathbf{w},n}$ be as in \refT{TCorollary1},
and assume that one of the conditions \ref{case1GW}--\ref{case3GW} there holds.
Then, for any fixed $k\in\bbNo$,
as $n \rightarrow \infty$ (along integers $n$ such that $Z_{n}(\mathbf{w}) >0$),
\begin{align} \label{fi11}
\left(\frac{ n_{\mathcal{T}_{\mathbf{w},n}}(0) - n  \theta_{0}(\mathbf{w})}{\sqrt{n}}, \dots, \frac{ n_{\mathcal{T}_{\mathbf{w},n}}(k) - n  \theta_{k}(\mathbf{w})}{\sqrt{n}}
\right)\dto {\rm N}(0, \dGamma_{k}),  
\end{align}
\noindent where the covariance matrix $\dGamma_{k} \coloneqq (\dgamma(i,j))_{i,j=0}^{k}$ is given by 
\begin{align} \label{fi12}
\dgamma(i, i) & = \theta_{i}(\mathbf{w})(1- \theta_{i}(\mathbf{w})) - (i-1)^{2} \theta_{i}(\mathbf{w})^{2}/\varsigma^{2}_{\mathbf{w}},  \\
\dgamma(i, j) & = - \theta_{i}(\mathbf{w})\theta_{j}(\mathbf{w}) 
- (i-1)(j-1) \theta_{i}(\mathbf{w}) \theta_{j}(\mathbf{w})/\varsigma^{2}_{\mathbf{w}},
\qquad i\neq j, \label{fi13}
\end{align}
where $\varsigma_\bw^2$ is as in \refT{TCorollary1}. 
(In particular, 
$\varsigma_{\bw}^{2}=\infty$ in cases \ref{case2GW} and \ref{case3GW}; we then
interpret the final terms in \eqref{fi12} and \eqref{fi13} as $0$.)
\end{theorem}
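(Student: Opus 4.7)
The plan is to apply the multivariate Gao--Wormald theorem (\refT{TGW}) to the random vector $\bigl(n_{\cT_{\bw,n}}(0),\dots,n_{\cT_{\bw,n}}(k)\bigr)$, with parameters $\mu_{j\kk}:=n\gth_j$ and $\gs_{j\kk}:=\sqrt n$ for $0\le j\le k$; matching the exponent in \eqref{tgw2} of \refT{TGW} will identify the covariance matrix $\dGamma_k$. This strategy handles cases \ref{case1GW} and \ref{case2GW}; case \ref{case3GW} (subcritical regime) requires a different argument and is cited from \cite{Paul2020}. By \refR{RGWequiv}, in cases \ref{case1GW} and \ref{case2GW} we may assume without loss of generality that $\bw=\gth$ is a critical probability distribution on $\bbNo$ (mean $1$).

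The first step is the classical iid representation: if $(\xi_\ell)_{\ell=1}^n$ are iid with distribution $\gth$, $N_j:=|\{\ell:\xi_\ell=j\}|$, and $S_m:=\sum_{\ell=1}^m \xi_\ell$, then the degree statistic of $\cT_{\bw,n}$ has the same distribution as $(N_j)_{j\ge0}$ conditioned on $S_n=n-1$. A direct counting argument (choose the $Q=\sum_j q_j$ distinguished positions, fix their $\xi$-values, then treat the remaining $n-Q$ coordinates) yields, for every $q_0,\dots,q_k\in\bbNo$ with $Q=\sum_j q_j$ and $R=\sum_j j q_j$,
\begin{align}\label{pl1}
\E\lrsqpar{\prod_{j=0}^{k}(N_j)_{q_j}\,\Big|\,S_n=n-1}
= (n)_Q\cdot\prod_{j=0}^{k}\gth_j^{q_j}\cdot\frac{\P(S_{n-Q}=n-1-R)}{\P(S_n=n-1)}.
\end{align}

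Next I would estimate each factor uniformly in the Gao--Wormald range $q_{j\kk}=O(\sqrt n)$ (so $Q,R=O(\sqrt n)$). By \refL{L0}, $(n)_Q=n^Q\exp\bigl(-Q^2/(2n)+o(1)\bigr)$. The ratio of point probabilities is controlled by a local limit theorem: in case \ref{case1GW} (finite variance $\sigma_\bw^2$), Gnedenko's local CLT gives $\P(S_n=n-1)\sim(\sigma_\bw\sqrt{2\pi n})\qw$ and uniformly
\begin{align}\label{pl3}
\frac{\P(S_{n-Q}=n-1-R)}{\P(S_n=n-1)}=\exp\lrpar{-\frac{(Q-R)^2}{2\sigma_\bw^2 n}+o(1)};
\end{align}
in case \ref{case2GW} ($\gth$ in the domain of attraction of a stable law of index $\alpha\in(1,2]$ with $\sigma_\bw^2=\infty$), the corresponding stable local limit theorem has normalising scale $a_n$ with $a_n/\sqrt n\to\infty$, so the shifts $Q,R=O(\sqrt n)$ are negligible compared to $a_n$ and the ratio tends to $1$ uniformly (consistent with the convention $\varsigma_\bw^2=\infty$).

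Combining these ingredients in \eqref{pl1}, and using $n^Q\prod_j\gth_j^{q_j}=\prod_j\mu_{j\kk}^{q_{j\kk}}$, I would obtain
\begin{align}\label{pl2}
\frac{\E\bigl[\prod_{j=0}^{k} (N_j)_{q_{j\kk}}\bigr]}{\prod_{j=0}^{k}\mu_{j\kk}^{q_{j\kk}}}
=\exp\lrpar{-\frac{Q^2}{2n}-\frac{(Q-R)^2}{2\varsigma_\bw^2 n}+o(1)},
\end{align}
uniformly over $q_{j\kk}=O(\sqrt n)$, where the second summand vanishes in case \ref{case2GW}. This matches the hypothesis \eqref{tgw2} of \refT{TGW}; expanding the exponent as $-\tfrac12\sum_{i,j}\bigl[1+(i-1)(j-1)/\varsigma_\bw^2\bigr]q_{i\kk}q_{j\kk}/n$ and reading off the coefficient of $q_{i\kk}q_{j\kk}$ against $\tfrac12\bigpar{\dgamma(i,j)\gs_{i\kk}\gs_{j\kk}-\gd_{ij}\mu_{i\kk}}/(\mu_{i\kk}\mu_{j\kk})$ yields precisely $\dgamma(i,j)=\gth_i\gd_{ij}-\gth_i\gth_j-(i-1)(j-1)\gth_i\gth_j/\varsigma_\bw^2$ as in \eqref{fi12}--\eqref{fi13}. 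The main obstacle will be uniform control of the local limit theorem in case \ref{case2GW}: one needs the ratio in \eqref{pl3} to tend to $1$ uniformly over $(Q,R)=O(\sqrt n)$, which follows from a uniform stable local limit theorem since the shifts satisfy $Q,R=o(a_n)$; once this is secured, the remaining calculations are routine.
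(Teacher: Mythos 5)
Your proposal is correct and follows essentially the same route as the paper's Appendix B proof: reduction to $\bw=\gth(\bw)$ via \refR{RGWequiv}, the conditioned-iid representation of the degree statistic, the exact factorial-moment formula, \refL{L0} plus a local limit theorem (classical in case \ref{case1GW}, stable in case \ref{case2GW}), and then \refT{TGW}; the paper likewise does not prove case \ref{case3GW} by this method but cites \cite{Paul2020}. The only detail you gloss over is the lattice/span issue in the local limit theorem (the paper checks that $n-1-R$ lies on the span lattice), which is routine.
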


\begin{proof}[Proof of \refT{TCorollary1}]
For any fixed degree statistic $\mathbf{n}$ with
$\mathbb{P}(\mathbf{n}_{\mathcal{T}_{\mathbf{w},n}} = \mathbf{n}) >0$,
\eqref{eq23} implies that conditionally given 
$\mathbf{n}_{\mathcal{T}_{\mathbf{w},n}} =
\mathbf{n}$, $\mathcal{T}_{\mathbf{w},n} \sim {\rm
  Unif}(\mathbb{T}_{\mathbf{n}})$; see e.g., \cite[Proposition
8]{Addario2022}. By the Skorohod coupling theorem \cite[Theorem
4.30]{Kallenberg2002}, we can and will assume that the convergence in
Theorem \ref{SvanteDegree} holds a.s.;
in other words, \refCond{Condition1} holds a.s.\ 
for the degree statistics $\mathbf{n}_{\mathcal{T}_{\mathbf{w},n}}$, 
with $\bp=\theta(\bw)$.
Moreover, e.g.\ by resampling $\cT_{\bw,n}$ conditioned on 
$\mathbf{n}_{\mathcal{T}_{\mathbf{w},n}}$, we may assume that 
also conditioned on the entire
sequence of degree statistics
$(\mathbf{n}_{\mathcal{T}_{\mathbf{w},n}})_{n=1}^\infty$,
the random trees $\cT_{\bw,n}$, $n\ge1$,  
have the (conditional) distributions  
$\Unif(\bbT_{\bn_{\mathcal{T}_{\mathbf{w},n}}})$.
It follows that we may apply Theorem \ref{TheoCLT}
conditioned on the 
sequence of degree statistics
$(\mathbf{n}_{\mathcal{T}_{\mathbf{w},n}})_{n=1}^\infty$; 
this shows that (\ref{eq22}) holds
conditioned on
$(\mathbf{n}_{\mathcal{T}_{\mathbf{w},n}})_{n=1}^\infty$. Then, (\ref{eq22})
also holds unconditionally by the dominated convergence theorem. 
Furthermore, (\ref{eq25}) follows from Lemma \ref{lemma1}.

In the rest of the proof we assume that
either \ref{case1GW}, \ref{case2GW} or \ref{case3GW} holds, and thus 
\refT{TheoDegreeS} applies.
We fix $k$ so large that $k\ge i$ for every $i\in\bigcup_{j=1}^m \cD(T_j)$.
(Recall \eqref{cD}.)
For convenience, we use again the Skorohod coupling theorem, 
and may thus assume that the limits 
$\bp(\mathbf{n}_{\mathcal{T}_{\mathbf{w},n}})\to\theta(\bw)$
in Theorem \ref{SvanteDegree} and (\ref{fi11}) 
in \refT{TheoDegreeS}  hold almost surely. 

Let $1\le j\le m$.
First, 
suppose that $\pi_{\theta(\mathbf{w})}(T_{j}) >0$, 
i.e., $\gth_i(\bw)>0$ for $i\in\cD(T_j)$.
Then, a.s., by (\ref{eq25}) and Lemma \ref{L0} and the assumed a.s.\
convergence in \eqref{fi11},
\begin{align}
\E[N_{T_{j}}(\mathcal{T}_{\mathbf{w},n}) \mid \mathbf{n}_{\mathcal{T}_{\mathbf{w},n}}] & =  n \prod_{i \in\cD(T_j)} \left(\frac{n_{\mathcal{T}_{\mathbf{w},n}}(i)}{n} \right)^{n_{T_{j}}(i)} + O(1) \nonumber \\
& = n \exp \left(\sum_{i \in\cD(T_j)} n_{T_{j}}(i) \ln \left( \frac{n_{\mathcal{T}_{\mathbf{w},n}}(i) - n \theta_{i}(\mathbf{w})}{n} + \theta_{i}(\mathbf{w}) \right) \right) + O(1) \nonumber  \\
& = \pi_{\theta(\mathbf{w})}(T_{j}) n + \pi_{\theta(\mathbf{w})}(T_{j}) \sum_{i \geq 0} \frac{n_{T_{j}}(i)}{\theta_{i}(\mathbf{w})} \left(n_{\mathcal{T}_{\mathbf{w},n}}(i) - n \theta_{i}(\mathbf{w}) \right) + O(1). \label{eq26}
\end{align}
\noindent On the other hand, if $\pi_{\theta(\mathbf{w})}(T_{j}) =0$, then
$\theta_{i^{\prime}}(\mathbf{w}) = 0$ for some $i^{\prime} \geq 0$ such that
$n_{T_{j}}(i^{\prime}) >0$. 
Since we assume $\nu_\bw>0$, this implies by 
\eqref{sf0} and 
\eqref{sf1} that $w_{i'}=0$;
hence, $\cT_{\bw,n}$ a.s.\ contains no vertex of degree $i'$, 
and thus no fringe subtree $T_j$, so
$N_{T_{j}}(\mathcal{T}_{\mathbf{w},n}) =0$.
Consequently, \eqref{eq26} holds trivially in this case too.
Thus \eqref{eq26} holds in both cases.

Since
the sum in (\ref{eq26}) really only contains a finite number of terms,
it and \eqref{fi11} imply
that, 
as $n \rightarrow \infty$,
a.s., 
for some random vector $\bW$,
\begin{align} \label{eq27}
\left(\frac{ \E[N_{T_{1}}(\mathcal{T}_{\mathbf{w},n}) \mid
  \mathbf{n}_{\mathcal{T}_{\mathbf{w},n}}] - n
  \pi_{\theta(\mathbf{w})}(T_{1})}{\sqrt{n}}, \dots, \frac{
  \E[N_{T_{m}}(\mathcal{T}_{\mathbf{w},n}) \mid
  \mathbf{n}_{\mathcal{T}_{\mathbf{w},n}}]- n
  \pi_{\theta(\mathbf{w})}(T_{m})}{\sqrt{n}}\right) 
\to 
\bW\sim
{\rm N}(0, \Gamma^{\prime}_{\theta{(\mathbf{w})}}),  
\end{align}
\noindent where the covariance matrix
$\Gamma^{\prime}_{\theta{(\mathbf{w})}} =
(\gamma^{\prime}_{\theta{(\mathbf{w})}}(T_i, T_j))_{i,j=1}^{m}$ is given by,
for $T, T'\in\bbT$,
\begin{align} 
\gamma^{\prime}_{\theta{(\mathbf{w})}}(T, T') 
& = \pi_{\theta(\mathbf{w})}(T) \pi_{\theta(\mathbf{w})}(T') \sum_{r, r^{\prime} \geq 0} \frac{n_{T}(r) n_{T'}(r^{\prime})}{\theta_{r}(\mathbf{w}) \theta_{r^{\prime}}(\mathbf{w})} \dgamma(r, r^{\prime})  
\notag\\
& = \pi_{\theta(\mathbf{w})}(T) \pi_{\theta(\mathbf{w})}(T') 
\left(  - |T||T'| - \varsigma^{-2}_{\mathbf{w}} 
+ \sum_{r\geq 0}\frac{n_{T}(r)n_{T'}(r)}{\theta_{r}(\mathbf{w})}
\right)
.\end{align}
To obtain the second equality, we have
used \eqref{fi12}--\eqref{fi13} and (\ref{eq30}), i.e., $|T| = \sum_{r \geq 0} n_{T}(r) = 1+
\sum_{r \geq 0} r n_{T}(r)$, 
noting that it suffices to consider the case
$\pi_{\gth(\bw)}(T)\pi_{\gth(\bw)}(T')>0$, which implies that
$\gth_r(\bw)\gth_{r'}(\bw)>0$ when $n_T(r)n_{T'}(r')>0$.

Recall that (\ref{eq22}) holds conditioned on 
the sequence $(\mathbf{n}_{\mathcal{T}_{\mathbf{w},n}})_{n=1}^\infty$. 
Therefore, the limits (\ref{eq22}) and (\ref{eq27}) hold jointly, 
with independent limits. 
It follows that \eqref{eq28} holds with
$\xGamma_{\theta{(\mathbf{w})}} = \Gamma_{\theta{(\mathbf{w})}} + 
\Gamma^{\prime}_{\theta{(\mathbf{w})}}$, and a simple calculation
yields \eqref{fi4}--\eqref{fi5}.
\end{proof}

\refT{TCorollary1} gives a partial solution to \cite[Problem 21.4]{Svante2012},
but the general case remains open.
\begin{problem}
Does \eqref{eq28} in \refT{TCorollary1} hold
for any weight sequence $\mathbf{w}$, with some covariance matrix
$\xGamma_{\theta{(\mathbf{w})}} = (\xgamma_{\theta{(\mathbf{w})}}(T_{i},
T_{j}))_{i,j=1}^{m}$? 
If so, is
$\xGamma_{\theta{(\mathbf{w})}}$ given by
 \eqref{fi4}--\eqref{fi5}, for a suitable $\varsigma^2_\bw$?
\end{problem}

The argument used to answer the previous question in the cases \ref{case1GW}, \ref{case2GW} or \ref{case3GW} (second part of \refT{TCorollary1}) works as soon as one has a general version of Theorem \ref{TheoDegreeS}. 

\begin{problem}\label{ProbPaul}
Does \eqref{fi11} in Theorem \ref{TheoDegreeS} hold for any weight sequence
$\mathbf{w}$?
\end{problem}

This was conjectured in \cite{Paul2020} (at least for $\rho_\bw>0$), 
but remains open as far as we know.
See also \refR{RB1}.

\section{Application to additive functionals}  
\label{AdditiveFunct}

Let $f: \mathbb{T} \rightarrow \mathbb{R}$ be a functional of rooted trees  (in this context often called \emph{toll function}) and for $T \in \mathbb{T}$, consider the functional $F$ (often called an \emph{additive functional}) that is defined as the sum over all fringe subtrees 
\begin{align} \label{Feq1}
F(T) = F(T, f) \coloneqq \sum_{v \in T} f(T_{v}). 
\end{align}
\noindent In particular, by choosing $f(T) = \mathbf{1}_{\{T=T^{\prime}\}}$
for some  $T^{\prime} \in \mathbb{T}$, we obtain $F(T)=N_{T^{\prime}}(T)$. 
Moreover, for any $f$,
\begin{align} \label{Feq2}
F(T) = \sum_{T^{\prime} \in \mathbb{T}} f(T^{\prime}) N_{T^{\prime}}(T) 
,\end{align}
\noindent  i.e., $F(T)$ can be written as a linear combination of subtree
  counts $N_{T^{\prime}}(T)$.

For a probability distribution $\mathbf{p} = (p_{i})_{i \geq 0} \in \mathcal{P}_{1}(\mathbb{N}_{0})$ and a functional $f: \mathbb{T} \rightarrow \mathbb{R}$ such that 
\begin{align}
\sum_{T \in \mathbb{T}} |f(T)| \pi_{\mathbf{p}}(T) < \infty, \label{Feq3}
\end{align}
\noindent  we let 
\begin{align}
\E_{\pi_{\mathbf{p}}}[f(\mathcal{T})] \coloneqq 
\E [f(\cTp)]=
\sum_{T \in \mathbb{T}} f(T) \pi_{\mathbf{p}}(T) 
.\label{Feq4}
\end{align}

We say that the functional $f: \mathbb{T} \rightarrow \mathbb{R}$ has
\emph{finite support} if $f(T) \neq 0$ only for finitely many trees;
equivalently, there exists a constant $K >0$ such that $f(T) = 0$ unless
$|T| \leq K$. It then follows from (\ref{Feq2}) that the additive functionals
$F$ associated to $f$ with finite support are exactly the (finite) linear
combinations of subtree counts.  Theorem \ref{TheoCLT} implies the following
corollary. Note that a functional $f$ with finite support always satisfies
(\ref{Feq3}) for any distribution  $\mathbf{p} \in
\mathcal{P}_{1}(\mathbb{N}_{0})$; indeed, in this case, the left-hand side
of (\ref{Feq3}) is a sum with finitely many non-zero summands. 
\begin{theorem} \label{TCAddSupp}
Let $\mathbf{n}_{\kappa}$, $\kk\ge1$, 
be some degree statistics that satisfy Condition
\ref{Condition1} and let $\mathcal{T}_{\mathbf{n}_{\kappa}} \sim {\rm
  Unif}(\mathbb{T}_{\mathbf{n}_{\kappa}})$. Suppose that $f: \mathbb{T}
\rightarrow \mathbb{R}$ is a functional of rooted trees with finite
support, and let $F$ be the corresponding additive functional. 
Then, as $\kappa \rightarrow \infty$, 
\begin{align}
\E F(\mathcal{T}_{\mathbf{n}_{\kappa}}) & =  |\mathbf{n}_{\kappa}| \E_{\pi_{\mathbf{p}}}[f(\mathcal{T})] +  o(|\mathbf{n}_{\kappa}|),\label{Feq5}\\
\Var F(\mathcal{T}_{\mathbf{n}_{\kappa}}) 
& = |\mathbf{n}_{\kappa}| \gamma_{\mathbf{p}}(f) +  o(|\mathbf{n}_{\kappa}|), \label{Feq6} 
\end{align}
\noindent where
\begin{align} \label{Feq7} 
\gamma_{\mathbf{p}}(f) \coloneqq 
2\E_{\pi_{\mathbf{p}}}[f(\mathcal{T})F(\mathcal{T})] 
-\E_{\pip}[f(\cT)^2]
+ \left(\E_{\pi_{\mathbf{p}}}[f(\mathcal{T}) (|\mathcal{T}| -1)] \right)^{2} 
- \sum_{i \geq 0} \frac{1}{p_{i}} 
\left(\E_{\pi_{\mathbf{p}}}[f(\mathcal{T}) n_{\mathcal{T}}(i)] \right)^{2} 
\mathbf{1}_{\{p_{i} >0\}}
\end{align}
\noindent is finite,
with $0\le\gammap(f)<\infty$.
Furthermore,
\begin{align}
\frac{F(\mathcal{T}_{\mathbf{n}_{\kappa}}) -
  \E[F(\mathcal{T}_{\mathbf{n}_{\kappa}})]}{\sqrt{|\mathbf{n}_{\kappa}|}} &
\dto 
{\rm N}(0,\gamma_{\mathbf{p}}(f)),  \hspace*{4mm} \text{as} \hspace*{2mm} \kappa \rightarrow \infty. \label{Feq8} 
\end{align}
\end{theorem}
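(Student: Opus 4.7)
The starting observation is that since $f$ has finite support, there is a finite list $T_1,\dots,T_m\in\bbT$ enumerating the trees on which $f$ does not vanish; then \eqref{Feq2} gives the finite linear combination $F(T)=\sum_{i=1}^m f(T_i)N_{T_i}(T)$, so all three conclusions will reduce to \refT{TheoCLT}.

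First, I would derive \eqref{Feq5} by taking expectations in this linear combination and applying \eqref{eq6}: $\E F(\Tnk)=|\bnk|\sum_{i=1}^m f(T_i)\pip(T_i)+o(|\bnk|)$, and the sum equals $\E_{\pip}[f(\cT)]$ by \eqref{Feq4} since only the finitely many $T_i$ contribute.

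Next, for the variance \eqref{Feq6}, the plan is to expand $\Var F(\Tnk)=\sum_{i,j=1}^m f(T_i)f(T_j)\Cov(N_{T_i}(\Tnk),N_{T_j}(\Tnk))$ and apply \eqref{eq15} and \eqref{eq13} to reduce matters to the identity
\begin{align*}
\sum_{i,j=1}^m f(T_i)f(T_j)\gammap(T_i,T_j)=\gammap(f).
\end{align*}
This is the main computational step. I would verify it by substituting \eqref{eq20}--\eqref{eq21} and separating three types of contribution: the diagonal $\pip(T)$ term produces $\E_{\pip}[f(\cT)^2]$; the $N_{T'}(T)\pip(T)$ cross terms, after using symmetry in $(i,j)$, combine via \eqref{Feq2} into $2\E_{\pip}[f(\cT)F(\cT)]$, with a $-2\E_{\pip}[f(\cT)^2]$ correction that removes the $i=j$ diagonal (since $N_T(T)=1$); and the $\etap$ terms, by definition \eqref{eq12}, factor into $\bigl(\E_{\pip}[f(\cT)(|\cT|-1)]\bigr)^{2}$ minus $\sum_i p_i^{-1}\bigl(\E_{\pip}[f(\cT)n_\cT(i)]\bigr)^{2}$ (where the $p_i=0$ indices contribute nothing, because $\pip(T)=0$ whenever $p_i=0$ for some $i\in\cD(T)$, forcing $\E_{\pip}[f(\cT)n_\cT(i)]=0$ for such $i$). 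Adding these three contributions and cancelling the two copies of $\E_{\pip}[f(\cT)^2]$ yields precisely \eqref{Feq7}.

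Finally, the CLT \eqref{Feq8} will follow from the joint convergence \eqref{eq7} in \refT{TheoCLT} by applying the continuous mapping theorem to the linear functional $(x_1,\dots,x_m)\mapsto\sum_{i=1}^m f(T_i)x_i$: its image under an $\N(0,\Gamma_{\bp})$ variable is $\N(0,\gammap(f))$, the variance being exactly the quadratic form just computed. Non-negativity and finiteness of $\gammap(f)$ then come for free: non-negativity from its identification as a limit variance, and finiteness because the finite support of $f$ reduces every sum in \eqref{Feq7} to a finite sum. The only obstacle I anticipate is the careful bookkeeping in the variance identity, where the two copies of $\E_{\pip}[f(\cT)^2]$ must cancel against each other exactly.
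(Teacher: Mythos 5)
Your proposal is correct and follows essentially the same route as the paper: write $F$ as a finite linear combination of fringe-tree counts over the support of $f$, apply \refT{TheoCLT} for the mean, covariance expansion, and joint CLT, and then verify that the quadratic form $\sum_{i,j}f(T_i)f(T_j)\gamma_{\mathbf p}(T_i,T_j)$ equals \eqref{Feq7} by splitting into the diagonal $\pi_{\mathbf p}$ term, the symmetrized $N_{T'}(T)\pi_{\mathbf p}(T)$ terms (with the $N_T(T)=1$ diagonal correction), and the $\eta_{\mathbf p}$ terms, exactly as in the paper's \eqref{Fe101}--\eqref{Feq14}. Your treatment of the $p_i=0$ indices and the deduction of $0\le\gamma_{\mathbf p}(f)<\infty$ also match the paper's.
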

                                                                            
\begin{remark}\label{RF1}
We have excluded terms with $p_i=0$ in the sum in \eqref{Feq7};
these terms make no difference, since if $p_i=0$, then for every $T$ either 
$n_T(i)=0$ or $\pip(T)=0$, and 
thus $\E_{\pip}[f(\cT)n_T(i)]=0$.
Furthermore, this sum 
has only finitely many non-zero terms, since we only need to consider
$i\in\bigcup_{T:f(T)\neq0} \cD(T)$, which is a finite union of finite sets.
\end{remark}

\begin{proof}
Since $f$ has finite support, there exists a constant $K >0$ such that $f(T)
= 0$ unless $|T| \leq K$, for $T \in \mathbb{T}$. 
Let $\bbTK:=\set{T\in\bbT:|T|\le K}$, and note that $\bbTK$ is a finite set
of trees.
It
follows from (\ref{Feq1}) and (\ref{Feq2}) that the corresponding additive
functional $F$ is given by 
\begin{align} \label{Feq9}
F(T) =  \sum_{T^{\prime} \in \bbTK} f(T^{\prime}) N_{T^{\prime}}(T), \hspace*{4mm} T \in \mathbb{T};
\end{align}
\noindent note that (\ref{Feq9}) has finitely many summands. 
We label the elements of $\bbTK$ as $T_1,\dots,T_m$, for some $m \in \mathbb{N}$, and
apply \refT{TheoCLT}.

Since $f$ has
finite support, it satisfies (\ref{Feq3}), 
and thus
$\E_{\pi_{\mathbf{p}}}[f(\mathcal{T})]$ is defined (and finite). 
Then, as $\kappa\rightarrow \infty$, (\ref{Feq5}) follows from 
\eqref{Feq9},
(\ref{eq6}) in Theorem \ref{TheoCLT}, and (\ref{Feq4}), which yield
\begin{align} \label{Feq10}
\E F(\mathcal{T}_{\mathbf{n}_{\kappa}}) 
=  \sum_{T \in \bbTK} f(T) \E N_{T}(\mathcal{T}_{\mathbf{n}_{\kappa}}) 
= |\mathbf{n}_{\kappa}|  \sum_{T \in \bbTK} f(T) \pi_{\mathbf{p}}(T) +
  o(|\mathbf{n}_{\kappa}|)
= |\bnk|\E_{\pi_{\mathbf{p}}}[f(\mathcal{T})]+o(|\bnk|)
.\end{align}


Similarly, (\ref{Feq9}) and 
(\ref{eq15})--(\ref{eq13}) in Theorem \ref{TheoCLT} imply
that, as $\kappa \rightarrow \infty$,  
\begin{align} 
\Var F(\mathcal{T}_{\mathbf{n}_{\kappa}})
& =  \sum_{T \in \bbTK} \sum_{T^{\prime}\in \bbTK} f(T) f(T^{\prime}) 
\Cov \bigpar{N_{T}(\mathcal{T}_{\mathbf{n}_{\kappa}}), N_{T^{\prime}}(\mathcal{T}_{\mathbf{n}_{\kappa}}) } \nonumber \\
& = |\mathbf{n}_{\kappa}| 
\sum_{T \in \bbTK} \sum_{T^{\prime} \in \bbTK} f(T) f(T^{\prime})
\gamma_{\mathbf{p}}(T, T^{\prime}) 
+ o(|\mathbf{n}_{\kappa}|) \label{Feq12}, 
\end{align}
\noindent where $\gamma_{\mathbf{p}}(T, T^{\prime})$ is defined in
(\ref{eq20})--(\ref{eq21}). 
In other words, \eqref{Feq6} holds with
\begin{align}\label{Fe100}
  \gammap(f):=
\sum_{T \in \bbTK} \sum_{T^{\prime} \in \bbTK} f(T) f(T^{\prime})
\gamma_{\mathbf{p}}(T, T^{\prime}) 
.\end{align}
Moreover, (\ref{Feq9}) and (\ref{Feq10}) imply that
\begin{align} \label{Feq11}
F(\mathcal{T}_{\mathbf{n}_{\kappa}}) -
  \E F(\mathcal{T}_{\mathbf{n}_{\kappa}}) 
=  \sum_{T \in \bbTK} f(T) \left(N_{T}(\mathcal{T}_{\mathbf{n}_{\kappa}}) -
  \E N_{T}(\mathcal{T}_{\mathbf{n}_{\kappa}}) \right),
\end{align}
which together with \eqref{eq7} implies that \eqref{Feq8} holds with
the same $\gammap(f)$ given by \eqref{Fe100}.

It remains to evaluate this $\gammap(f)$ and show that \eqref{Fe100} agrees
with \eqref{Feq7}.
First, \eqref{Feq4} yields
\begin{align}\label{Fe101}
 \sum_{T\in\bbTK} f(T)^2 \pi_{\mathbf{p}}(T) 
= \E_{\pip}\sqpar{f(\cT)^2}.
\end{align}
Next,
observe that \eqref{Feq4}  and \eqref{Feq9} yield
\begin{align} 
 \sum_{T \in \bbTK} \sum_{T^{\prime} \in \bbTK} f(T) f(T^{\prime})
  N_{T^{\prime}}(T) \pi_{\mathbf{p}}(T) & 
= \sum_{T^{\prime} \in \bbTK}  f(T^{\prime})  
\sum_{T \in \bbTK}  f(T) N_{T^{\prime}}(T) \pi_{\mathbf{p}}(T) \nonumber \\
& = \sum_{T^{\prime} \in \bbTK}  f(T^{\prime}) 
\E_{\pi_{\mathbf{p}}}[f(\mathcal{T}) N_{T^{\prime}}(\mathcal{T})] \nonumber  \\
& = \E_{\pi_{\mathbf{p}}}[f(\mathcal{T})F(\mathcal{T})]. \label{Feq13}
\end{align}
Hence, recalling \eqref{Fe101}, since $N_T(T)=1$ for every tree $T$,
\begin{align}\label{Fe102}
   \sum_{T\neq T'} f(T) f(T^{\prime}) N_{T^{\prime}}(T) \pi_{\mathbf{p}}(T) 
&= \E_{\pi_{\mathbf{p}}}[f(\mathcal{T})F(\mathcal{T})]
- \sum_{T\in\bbTK} f(T)^2 \pi_{\mathbf{p}}(T) 
\notag\\&
= \E_{\pi_{\mathbf{p}}}[f(\mathcal{T})F(\mathcal{T})]
- \E_{\pip}\sqpar{f(\cT)^2}
.\end{align}
Furthermore, by the definition \eqref{eq12} of $\eta_{\mathbf{p}}(T,T^{\prime})$ 
and (\ref{Feq4}), 
\begin{align} 
& \sum_{T \in \bbTK} \sum_{T^{\prime} \in \bbTK} f(T) f(T^{\prime}) \eta_{\mathbf{p}}(T, T^{\prime}) \pi_{\mathbf{p}}(T) \pi_{\mathbf{p}}(T^{\prime}) \nonumber  \\
&\hspace*{10mm} = \left(\sum_{T \in \bbTK}  f(T) (|T|-1)
\pi_{\mathbf{p}}(T) \right)^{2} 
- \sum_{i \geq 0} \frac{1}{p_{i}} \left( \sum_{T \in \bbTK}  f(T) n_{T}(i)
\pi_{\mathbf{p}}(T) \right)^{2} 
\nonumber  \\
&\hspace*{10mm} 
= \left(\E_{\pi_{\mathbf{p}}}[f(\mathcal{T}) (|\mathcal{T}|-1)] \right)^{2} 
- \sum_{i \geq 0} \frac{1}{p_{i}} \left(\E_{\pi_{\mathbf{p}}}[f(\mathcal{T}) n_{\mathcal{T}}(i)] \right)^{2}\mathbf{1}_{\{p_{i} >0\}} \label{Feq14}
,\end{align}
where we recall \refR{RF1}.
Note also that all sums and expectations in \eqref{Fe101}--\eqref{Feq14} are
finite, since $f$ has finite support.

Finally,  by combining \eqref{Fe100} and \eqref{eq20}--\eqref{eq21},
 with  \eqref{Fe101}, \eqref{Fe102}, and  (\ref{Feq14}), we
obtain (\ref{Feq7}). We have already remarked that all terms in \eqref{Feq7}
are finite.
\end{proof}

Note that we have not ruled out the possibility that $\gammap(f)=0$.
This may happen in trivial cases; whether it may happen in non-trivial cases
(properly defined)
is equivalent to \refP{ProbGamma}.
We may restate that problem in a slightly stronger form:
\begin{problem}
Let $\bp\in\ppi$ be given.
For which functionals $f$ with finite support is $\gammap(f)=0$?       
\end{problem}

In analogy to \cite{Svante2016} (which treats conditioned Galton--Watson trees),
we may, more generally, also study 
additive functionals where 
the associated toll function does not necessarily have a finite support. 
We use a standard truncation argument.
Let $f: \mathbb{T} \rightarrow \mathbb{R}$ be an arbitrary toll function.
For a constant $K>0$, define the truncation and tail functionals associated
to $f$ by letting 
\begin{align} \label{Feq15}
f^{(K)}(T) \coloneqq f(T) \mathbf{1}_{\{|T| \leq K \}} \hspace*{3mm} \text{and} \hspace*{3mm} \hf^{(K)}(T) \coloneqq f(T) - f^{(K)}(T), \hspace*{3mm} T \in \mathbb{T}. 
\end{align}
\noindent For $T \in \mathbb{T}$, we also let $F^{(K)}(T) \coloneqq F(T, f^{(K)})$ and $\hF^{(K)}(T) \coloneqq F(T, \hf^{(K)})$ be the additive functionals associated to $f^{(K)}$ and $\hf^{(K)}$, respectively; see (\ref{Feq1}).

\begin{theorem} \label{TCAddSupp2}
Let $\mathbf{n}_{\kappa}$, $\kk\ge1$, 
be some degree statistics that satisfy Condition \ref{Condition1} and let 
$\mathcal{T}_{\mathbf{n}_{\kappa}} \sim {\rm Unif}(\mathbb{T}_{\mathbf{n}_{\kappa}})$. Suppose that $f: \mathbb{T} \rightarrow \mathbb{R}$ is a functional of rooted trees such that the following conditions are satisfied:
\begin{enumerate}[label=\upshape(\roman*)]
\item There exists a real constant $0 \leq \gamma < \infty$ such that $\displaystyle \lim_{K \rightarrow \infty} \gamma_{\mathbf{p}}(f^{(K)}) = \gamma$; \label{AdCond1}
\item $\lim_{K \rightarrow \infty} \limsup_{\kappa \rightarrow \infty} \frac{{\rm Var}(\hF^{(K)}(\mathcal{T}_{\mathbf{n}_{\kappa}}))}{|\mathbf{n}_{\kappa}|}=0$.  \label{AdCond2}
\end{enumerate}
\noindent Then,
for the  corresponding additive functional $F$,
\begin{align} \label{Feq16}
\frac{F(\mathcal{T}_{\mathbf{n}_{\kappa}}) - \E[F(\mathcal{T}_{\mathbf{n}_{\kappa}})]}{\sqrt{|\mathbf{n}_{\kappa}|}} & \rightarrow {\rm N}(0,\gamma),  \hspace*{4mm} \text{as} \hspace*{2mm} \kappa \rightarrow \infty. 
\end{align}
\end{theorem}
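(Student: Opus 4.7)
The plan is to approximate $F$ by its truncations $F^{(K)}$, apply \refT{TCAddSupp} to each $F^{(K)}$ (which has finite support by construction), and then pass to the limit $K\to\infty$ by a standard approximation argument for weak convergence. Decompose
\begin{align*}
\frac{F(\cT_{\bnk}) - \E[F(\cT_{\bnk})]}{\sqrt{|\bnk|}}
=
\frac{F^{(K)}(\cT_{\bnk}) - \E[F^{(K)}(\cT_{\bnk})]}{\sqrt{|\bnk|}}
+
\frac{\hF^{(K)}(\cT_{\bnk}) - \E[\hF^{(K)}(\cT_{\bnk})]}{\sqrt{|\bnk|}}
=: X_\kk^{(K)} + R_\kk^{(K)}.
\end{align*}

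For the first summand, since $f^{(K)}$ has finite support, \refT{TCAddSupp} directly yields $X_\kk^{(K)} \dto \N\bigpar{0,\gammap(f^{(K)})}$ as $\kktoo$, for every fixed $K$. Hypothesis \ref{AdCond1} then gives $\gammap(f^{(K)}) \to \gamma$ as $K \to \infty$, so the Gaussian limits satisfy $\N(0,\gammap(f^{(K)})) \dto \N(0,\gamma)$. For the remainder term, Chebyshev's inequality gives, for every $\eps > 0$,
\begin{align*}
\P\bigpar{|R_\kk^{(K)}| > \eps}
\le \frac{\Var\bigpar{\hF^{(K)}(\cT_{\bnk})}}{\eps^{2}\,|\bnk|},
\end{align*}
and hypothesis \ref{AdCond2} then ensures that $\lim_{K\to\infty}\limsup_{\kktoo}\P(|R_\kk^{(K)}|>\eps) = 0$.

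Finally, we invoke the standard approximation result for convergence in distribution (e.g.\ \cite[Theorem 4.28]{Kallenberg2002}): if $X_\kk = X_\kk^{(K)} + R_\kk^{(K)}$ where $X_\kk^{(K)} \dto Y^{(K)}$ as $\kktoo$ for each fixed $K$, $Y^{(K)} \dto Y$ as $K\to\infty$, and the remainder satisfies the vanishing condition just displayed, then $X_\kk \dto Y$. All three ingredients have been established, so this yields \eqref{Feq16}. The argument is routine and presents no substantive obstacle; the only subtle point is that condition \ref{AdCond2} is precisely what is needed to make the tail-truncation step work, while \ref{AdCond1} is used only for continuity of the limiting variance.
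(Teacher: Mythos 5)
Your proposal is correct and follows essentially the same route as the paper: truncate $f$, apply \refT{TCAddSupp} to each $f^{(K)}$, control the tail $\hF^{(K)}$ via Chebyshev using condition \ref{AdCond2}, and conclude with the standard approximation theorem (\cite[Theorem 3.2]{Billingsley1999} or \cite[Theorem 4.28]{Kallenberg2002}). You merely spell out the Chebyshev step and the decomposition that the paper leaves implicit.
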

\begin{proof}
Note that (\ref{Feq8}) in  \refT{TCAddSupp} applies to each $f^{(K)}$, as $\kappa \rightarrow \infty$. Then, \ref{AdCond1}, \ref{AdCond2} and \cite[Theorem 3.2]{Billingsley1999} (or \cite[Theorem 4.28]{Kallenberg2002}) imply that we can let $K \rightarrow \infty$ and conclude (\ref{Feq16}). 
\end{proof}

The truncation argument used in the proof of 
\refT{TCAddSupp2} is the same as in \cite{Svante2016}, where critical
conditioned Galton--Watson trees with finite offspring variance were studied
(under some conditions on the toll functions). Indeed, the conditions in
\cite[Theorem 1.5]{Svante2016} say roughly that the functional $f(T)$ is
small when $|T|$ is large, and the proof there consists of verifying
(under these conditions) results analogous
to \ref{AdCond1} and \ref{AdCond2} in \refT{TCAddSupp2}. 
Furthermore, in \cite[Theorem 1.13]{Svante2016},
asymptotic normality was also proved (in the same way) 
for additive functionals under the
assumption that the toll function $f(T)$ is bounded and local (i.e., only
depends on a fixed neighbourhood of the root of $T$). 
This was extended further by
Ralaivaosaona et al.\ \cite{Ralaivaosaona2020}, who extended
the result 
to  ``almost local''
functionals (but assuming higher moments of the offspring distribution).  
For applications of our results above for random trees with given degree
statistics,  
it would be useful to have some explicit sufficient conditions
on $f$, 
similar to the conditions in the references just mentioned for conditioned
Galton--Watson trees.

 \begin{problem}
Find suitable conditions on 
the functional $f$,
and perhaps also on
the degree statistics $\mathbf{n}_{\kappa}$,
such that \ref{AdCond1} and \ref{AdCond2} in 
\refT{TCAddSupp2} are satisfied, and thus \eqref{Feq16} holds.
 \end{problem}

\begin{appendices}

\section{A multidimensional Gao--Wormald theorem} \label{AppendixA}

In this section, we generalize the Gao--Wormald theorem 
\cite[Theorem 1]{Gao2004} to the multidimensional setting. 
Note that a different but closely related multidimensional generalization
of the Gao--Wormald theorem has been shown recently (and independently)
by Hitczenko and Wormald \cite{HitczenkoWormald};
the two multidimensional versions use essentially the same condition on
(high) factorial moments, but the conditions and the result are stated in 
different ways.
It is not clear exactly how the two versions are related,
but it seems that the version in \cite{HitczenkoWormald} is more flexible,
and gives more precise results in cases when our asymptotic covariance
matrix $\Gamma$ is singular. On the other hand, it seems that our version
is easier to apply in the present paper, and perhaps also in some
other applications.
(We thank the authors of \cite{HitczenkoWormald}
for interesting discussions on multidimensional
versions of the Gao--Wormald theorem.)

For a sequence of real-valued random variables $(Z_{n})_{n \geq 1}$, and a sequence of real numbers $(a_{n})_{n \geq 1}$ such that $a_{n} > 0$, we write $Z_{n} = o_{\rm p}(a_{n})$ when $Z_{n}/a_{n} \rightarrow 0$, as $n \rightarrow \infty$, in probability. For a complex number $x$ we denote by ${\rm Re}(x)$ its real part. 

\begin{theorem}\label{TGW}
For $m, n \in \mathbb{N}$, let $(X_{1n},\dots,X_{mn})$ be vectors of non-negative random variables. Suppose that $\mu_{in}$ and $\sigma_{in}$ are positive real numbers such that for each $1 \leq i \leq m$, as $n \rightarrow \infty$,
\begin{align}\label{tgw1}
\sigma_{in} \ll \mu_{in} \ll \sigma_{in}^{3}.
\end{align}
\noindent Let $\Gamma \coloneqq (\gamma_{ij})_{i,j=1}^m$ be a fixed matrix.
Let $c>0$ be a constant, and suppose further that, as $n \rightarrow \infty$, 
uniformly for all integer sequences $(k_{in})_{i=1}^{m}$ with $0 \leq k_{in}
\leq c \mu_{in}/\sigma_{in}$,
\begin{align}\label{tgw2}
\E \prod_{i=1}^{m} (X_{in})_{k_{in}} = \prod_{i=1}^{m} \mu_{in}^{k_{in}} \cdot \exp \left(\frac{1}{2} \sum_{i,j=1}^{m} \frac{\gamma_{ij} \sigma_{in} \sigma_{jn}-\delta_{ij} \mu_{in}}{\mu_{in} \mu_{jn} }k_{in}k_{jn} + o(1)\right),
\end{align}
\noindent Then, 
\begin{align}\label{tgw3}
\left( \frac{X_{1n}- \mu_{1n}}{\sigma_{1n}},  \dots, \frac{X_{mn}- \mu_{mn}}{\sigma_{mn}} \right) \dto {\rm N}(0, \Gamma), \hspace*{4mm} \text{as} \hspace*{2mm} n \rightarrow \infty. 
\end{align}
\end{theorem}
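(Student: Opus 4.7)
The plan is to establish \eqref{tgw3} by showing convergence of the joint characteristic function
\[
\phi_n(t):=\E\exp\Bigl(i\sum_{j=1}^m t_j(X_{jn}-\mu_{jn})/\sigma_{jn}\Bigr)
\]
to that of $\N(0,\Gamma)$ for every fixed $t=(t_1,\dots,t_m)\in\bbR^m$; by L\'evy's continuity theorem this implies \eqref{tgw3}. The starting point is the elementary identity
\[
\E\prod_{j=1}^m z_j^{X_{jn}} = \sum_{\mathbf{k}\in\bbNo^m}\prod_{j=1}^m\frac{(z_j-1)^{k_j}}{k_j!}\,\E\prod_{j=1}^m(X_{jn})_{k_j},
\]
valid for non-negative integer valued $X_{jn}$ whenever absolute convergence holds, applied with $z_j=\exp(it_j/\sigma_{jn})$; the left-hand side then equals $\phi_n(t)\exp\bigl(i\sum_j t_j\mu_{jn}/\sigma_{jn}\bigr)$, so it suffices to show that the right-hand side is $\exp\bigl(i\sum_j t_j\mu_{jn}/\sigma_{jn}-\tfrac12\sum_{i,j}\gamma_{ij}t_it_j+o(1)\bigr)$.

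I would then split the series at a threshold. Fix $c'>0$ larger than the constant $c$ of \eqref{tgw2} and larger than $\max_j|t_j|$, and restrict the sum to the box $\cK_n:=\prod_j\{0,\dots,\lfloor c'\mu_{jn}/\sigma_{jn}\rfloor\}$, on which \eqref{tgw2} applies directly. After substitution of \eqref{tgw2}, the resulting main sum is a truncated multivariate ``Poisson-type'' series with complex parameters $\lambda_{jn}:=(z_j-1)\mu_{jn}\sim it_j\mu_{jn}/\sigma_{jn}$ and an additional quadratic exponential weight $\exp\bigl(\tfrac12\sum_{i,j}A_{ij,n}k_ik_j\bigr)$, where $A_{ij,n}:=(\gamma_{ij}\sigma_{in}\sigma_{jn}-\delta_{ij}\mu_{in})/(\mu_{in}\mu_{jn})$. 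A saddle-point evaluation (the saddle lying at the complex point $k_j=\lambda_{jn}$, of magnitude $O(\mu_{jn}/\sigma_{jn})$ and hence safely inside $\cK_n$) produces the closed-form expression $\exp\bigl(\sum_j\lambda_{jn}+\tfrac12\sum_{i,j}A_{ij,n}\lambda_{in}\lambda_{jn}+o(1)\bigr)$; the fluctuation corrections $\sum_{i,j}A_{ij,n}\sqrt{\lambda_{in}\lambda_{jn}}$ and $\sum_j\lambda_{jn}(\sum_i A_{ij,n}\lambda_{in})^2$ coming from a Poisson CLT around the saddle are $o(1)$ precisely because $\sigma_{jn}\ll\mu_{jn}\ll\sigma_{jn}^3$. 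Expanding $\lambda_{jn}=it_j\mu_{jn}/\sigma_{jn}-\tfrac12 t_j^2\mu_{jn}/\sigma_{jn}^2+o(1)$ (the remainder being $o(1)$ by \eqref{tgw1}) and using the explicit form of $A_{ij,n}$ then shows that the potentially divergent $\mu_{jn}/\sigma_{jn}^2$ contributions from the two sources cancel exactly, and the exponent simplifies to $i\sum_j t_j\mu_{jn}/\sigma_{jn}-\tfrac12\sum_{i,j}\gamma_{ij}t_it_j+o(1)$, as desired.

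The main obstacle is the tail estimate: controlling the contribution of $\mathbf{k}\notin\cK_n$, on which \eqref{tgw2} gives no information. I would address this by adapting the ratio argument from the one-dimensional Gao--Wormald proof \cite{Gao2004}: at the boundary of $\cK_n$, bound the ratio of successive joint factorial moments $\E\prod_j(X_{jn})_{k_j+\delta_{j_0 j}}/\E\prod_j(X_{jn})_{k_j}$ using \eqref{tgw2}, and iterate to extend an estimate of the form $\E\prod_j(X_{jn})_{k_j}\le C\prod_j\mu_{jn}^{k_j}$ (possibly with a mild correction factor) to all $\mathbf{k}\in\bbNo^m$; combined with the rapid decay of $\prod_j|z_j-1|^{k_j}/k_j!=\prod_j(|t_j|/\sigma_{jn})^{k_j}/k_j!$ once $k_j$ exceeds $O(\mu_{jn}/\sigma_{jn})$, this forces the tail contribution to be $o(1)$. (An alternative route would be to use H\"older's inequality to dominate the joint factorial moments by univariate ones and then invoke the univariate tail estimate directly.) Combined with the main-sum evaluation, this gives $\phi_n(t)\to\exp\bigl(-\tfrac12\sum_{i,j}\gamma_{ij}t_it_j\bigr)$, and L\'evy's continuity theorem concludes \eqref{tgw3}.
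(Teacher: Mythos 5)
Your strategy---expanding the characteristic function through the probability generating function identity $\E\prod_j z_j^{X_{jn}}=\sum_{\mathbf{k}}\prod_j\frac{(z_j-1)^{k_j}}{k_j!}\E\prod_j(X_{jn})_{k_j}$ and evaluating the series---has a gap at exactly the point you identify as the main obstacle, and I do not see how to close it under the stated hypotheses. The assumption \eqref{tgw2} controls the joint factorial moments \emph{only} for $k_{in}\le c\mu_{in}/\sigma_{in}$; nothing in the hypotheses constrains them beyond that box, and they can be made arbitrarily large (or infinite) by perturbing $X_{in}$ on an event of vanishingly small probability, without affecting either \eqref{tgw2} or the truth of \eqref{tgw3}. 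Consequently the tail of your series need not be small in absolute value --- if the identity still holds it is only because of cancellation among the high-order terms, which you have no tool to exhibit. The proposed ``ratio iteration'' cannot supply the missing bound: \eqref{tgw2} only compares $\E\prod_j(X_{jn})_{k_j+\delta_{jj_0}}$ with $\E\prod_j(X_{jn})_{k_j}$ while \emph{both} multi-indices lie inside the box, so there is nothing to iterate once you cross the boundary, and factorial moments are not log-convex in $k$, so a boundary ratio does not propagate outward. The H\"older alternative fails for the same reason, since the univariate tail estimate it would invoke does not exist either. Two further (smaller) issues: your starting identity requires the $X_{jn}$ to be integer-valued and the series to converge absolutely near $z=1$ (an exponential moment condition), whereas the theorem is stated for arbitrary non-negative random variables.

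The point of the Gao--Wormald device, which the paper's proof follows, is precisely to avoid ever needing moments outside the prescribed range. One studies not the characteristic function but $\E\prod_i e^{t_{in}\zeta_{in}}$, where $\zeta_{in}:=(\mu_{in}/\sigma_{in})\ln(X_{in}/\mu_{in})$ and $t_{in}:=k_{in}\sigma_{in}/\mu_{in}$ ranges over a fixed real interval; for such $t_{in}$ the quantity $e^{t_{in}\zeta_{in}}=(X_{in}/\mu_{in})^{k_{in}}$ is comparable \emph{pointwise} to the single ratio $Q_{in}(k_{in})=(X_{in})_{k_{in}}/(\mu_{in})_{k_{in}}$ with $k_{in}$ inside the allowed box, so \eqref{tgw2} suffices with no tail to control. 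This yields convergence of the joint Laplace transform of $(\zeta_{1n},\dots,\zeta_{mn})$ on a real cube, and the limit is identified as ${\rm N}(0,\Gamma)$ via tightness, uniform integrability, analytic continuation and exponential tilting; finally $(X_{in}-\mu_{in})/\sigma_{in}=\zeta_{in}+o_{\rm p}(1)$ by \eqref{tgw1}. To make your route work you would have to add a hypothesis controlling all factorial moments, which would be a genuinely weaker theorem.
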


\begin{proof}
We follow closely the proof of the one-dimensional version in \cite{Gao2004}. In the following, unspecified limits are as $n \rightarrow \infty$. Note first that \eqref{tgw1} implies $\sigma_{in} \rightarrow \infty$ and $\mu_{in} \rightarrow \infty$. For $1 \leq i \leq m$, we let
\begin{align}
\zeta_{in} & \coloneqq \frac{\mu_{in}}{\sigma_{in}} \ln \left( \frac{X_{in}}{\mu_{in}}\right), \label{tq1} \\
Q_{in}(k) & \coloneqq \frac{(X_{in})_{k}}{(\mu_{in})_{k}}, \label{tq2} \\
t_{in } & \coloneqq k_{in}\frac{\sigma_{in}}{\mu_{in}}.  \label{tq3}
\end{align}
\noindent By Lemma \ref{L0} and \eqref{tgw1}, the assumption \eqref{tgw2} implies, uniformly for allowed sequences $(k_{in})_{i=1}^{m}$,
\begin{align}\label{tq4}
\E \prod_{i=1}^{m} Q_{in}(k_{in}) = \exp \left(\frac{1}{2} \sum_{i,j=1}^{m} \frac{\gamma_{ij} \sigma_{in} \sigma_{jn} k_{in}k_{jn}}{\mu_{in} \mu_{jn} }  + o(1) \right) = \exp \left(\frac{1}{2} \sum_{i,j=1}^{m} \gamma_{ij} t_{in} t_{jn} \right) + o(1). 
\end{align}
\noindent Note that \eqref{tq3} implies that $0\leq t_{in} \leq c$, and thus $t_{in} =O(1)$. In particular, the expectation in \eqref{tq4} is $O(1)$.

We recall the inequality \cite[(2.8)]{Gao2004}, which says that if $a \geq b >k$, then
\begin{align}\label{tq5}
k \ln(a/b) \leq \ln \left( \frac{(a)_k}{(b)_k} \right) \leq k \log \left(\frac{a-k}{b-k}\right) \leq \frac{k\ln(a/b)}{1-k/b}.
\end{align}
\noindent Note that $k_{in}/\mu_{in} \leq c/ \sigma_{in} \rightarrow 0$, for
$1 \leq i \leq m$.  In the sequel, we will consider only $n$ that are
large enough so that $\sigma_{in} >2c$, for every $1 \leq i \leq m$. Hence,
for allowed sequences $(k_{in})_{i=1}^{m}$, we have $k_{in}/\mu_{in}<1/2$.
Then, 
when $X_{in} \geq \mu_{in}$,  
\eqref{tq5} yields
$2\ln Q_{in}(k_{in}) \leq 4 k_{in} \ln(X_{in}/\mu_{in}) \leq \ln Q_{in}(4 k_{in})$. Thus, when $X_{in} \geq \mu_{in}$,
\begin{align}\label{tq7}
 Q_{in}(k_{in})^2 \leq  Q_{in}(4k_{in}).
\end{align}
Furthermore, if $k_{in} \leq X_{in} \leq \mu_{in}$, we trivially have
$Q_{in}(k_{in})\leq 1$. 
Similarly, it is easy to see that if $0\leq
X_{in}<k_{in}$, then $|(X_{in})_{k_{in}}|\leq (k_{in})_{k_{in}}\leq
(\mu_{in})_{k_{in}}$ and thus $|Q_{in}(k_{in})|\leq 1$ in this case
too. (This is trivial if $X_{in}$ is integer-valued.) By combining these
observations with \eqref{tq7} for the case $X_{in}\ge\mu_{in}$, 
we see that for any allowed sequences
$(k_{in})_{i=1}^{m}$, and any $X_{in}$,
\begin{align}\label{tq8}
Q_{in}(k_{in})^{2} \leq  Q_{in}(4k_{in}) + 1 = Q_{in}(4k_{in}) + Q_{in}(0).
\end{align}

We set $c^{\prime} \coloneqq c/4$, and we henceforth consider only sequences $(k_{in})_{i=1}^{m}$ such that
\begin{align}\label{tq00}
k_{in} \leq c^{\prime}\mu_{in}/\sigma_{in}.
\end{align}
Then \eqref{tgw2} and, as a consequence, \eqref{tq4} hold
also with $4k_{in}$ or $0$ instead of $k_{in}$.
Hence, \eqref{tq8} implies that
\begin{align}\label{tq9}
\E  \prod_{i=1}^{m} \left( 1+Q_{in}(k_{in})^{2} \right) 
\leq C,
\end{align}
for some constant $C$.
(We similarly use $C$  below to denote unknown constants, possibly with
different values on each occasion.) 

For $1 \leq i \leq m$, we set $\varepsilon_{in} \coloneqq 2c^{\prime}/\sigma_{in}$. Then, \eqref{tq00} implies $k_{in}/(\mu_{in}/2)\leq \varepsilon_{in}$.
Note that $\varepsilon_{in}<2c^{\prime}/(2c)= 1/4$ and $\varepsilon_{in} \rightarrow 0$, as $n \rightarrow \infty$. It is shown in \cite{Gao2004} that
\begin{align}\label{tq10}
\left| e^{t_{in} \zeta_{in}}-Q_{in}(k_{in}) \right| 
\le 
\left\{ \begin{array}{lcl}
             \min( Q_{in}(k_{in}), \varepsilon_{in} Q_{in}(k_{in}) \ln (Q_{in}(k_{in})))  & \mbox{  if } &  X_{in} \geq \mu_{in}, \\
            \varepsilon_{in} Q_{in}(k_{in})^{1-\varepsilon_{in}} \ln(1/Q_{in}(k_{in}))   & \mbox{  if } & \mu_{in}/2 \leq X_{in} < \mu_{in}, \\
            2^{-k_{in}} & \mbox{  if } & 0 \leq X_{in} < \mu_{in}/2. \\
              \end{array}
                  \right.
\end{align}
Note also that it follows from \eqref{tq2} that $X_{in}\ge \mu_{in}$ if and
only if $Q_{in}\ge1$.
By considering the four cases $X_{in} < \mu_{in}/2$,
$\mu_{in}/2\leq X_{in}<\mu_{in}$, $1\leq Q_{in} \leq \varepsilon_{in}^{-1/2}$ and
$Q_{in}>\varepsilon_{in}^{-1/2}$, it follows from \eqref{tq11} that
\begin{align}\label{tq11}
\left| e^{t_{in} \zeta_{in}}-Q_{in}(k_{in}) \right| & \leq 2^{-k_{in}} + C \varepsilon_{in}+ C \varepsilon_{in}^{1/2} \ln(\varepsilon_{in}^{-1/2}) + Q_{in}(k_{in}) \mathbf{1}_{\{Q_{in}(k_{in}) > \varepsilon_{in}^{-1/2}\}} \notag \\
& \leq 2^{-k_{in}}+C \varepsilon_{in}^{1/3} + \varepsilon_{in}^{1/2} Q_{in}(k_{in})^{2} \notag \\ 
& \leq C (2^{-k_{in}} + C \varepsilon_{in}^{1/3})(1+Q_{in}(k_{in})^{2})
.\end{align}
Furthermore, \eqref{tq5} shows that if $X_{in}\geq \mu_{in}$, then $e^{t_{in}\zeta_{in}}=(X_{in}/\mu_{in})^{k_{in}}\leq Q_{in}(k_{in})$. Hence, for any $X_{in}$,
\begin{align}\label{tq12}
  0<e^{t_{in}\zeta_{in}} \leq 1+ Q_{in}(k_{in}).
\end{align}
\noindent It follows from \eqref{tq11} and \eqref{tq12} that
\begin{align}\label{tq13}
\left| \prod_{i=1}^{m} e^{t_{in}\zeta_{in}}- \prod_{i=1}^{m} Q_{in}(k_{in}) \right| & \leq \sum_{i=1}^{m} \left| e^{t_{in} \zeta_{in}}-Q_{in}(k_{in}) \right| \prod_{j \neq i}(1+Q_{jn}(k_{jn})) \notag \\
& \leq C \sum_{i=1}^{m} (2^{-k_{in}} + C \varepsilon_{in}^{1/3}) \cdot \prod_{i=1}^{m} (1+Q_{in}(k_{in})^{2})
.\end{align}
Now, we further restrict to sequences $(k_{in})_{i=1}^{m}$ such that $c^{\prime \prime} \mu_{in}/\sigma_{in} \leq k_{in} \leq
c^{\prime}\mu_{in}/\sigma_{in}$, where $c^{\prime \prime} \coloneqq c^{\prime}/2$.
Then $2^{-k_{in}}\rightarrow 0$, uniformly for all allowed sequences $(k_{in})_{i=1}^{m}$. Thus, \eqref{tq13} implies, uniformly,
\begin{align}\label{tq14}
\left| \prod_{i=1}^{m} e^{t_{in}\zeta_{in}}- \prod_{i=1}^{m} Q_{in}(k_{in}) \right|   
= o(1) \cdot \prod_{i=1}^{m} (1+Q_{in}(k_{in})^{2}). 
\end{align}
\noindent It then follows from \eqref{tq9} and \eqref{tq4} that, uniformly for all allowed sequences $(k_{in})_{i=1}^{m}$,
\begin{align}\label{tq15}
\E \prod_{i=1}^{m} e^{t_{in}\zeta_{in}}  = \exp \left(\frac{1}{2} \sum_{i,j=1}^{m} \gamma_{ij} t_{in} t_{jn} \right) + o(1).
\end{align}

We claim that \eqref{tq15} implies that 
\begin{align}\label{ky6}
(\zeta_{1n}, \dots, \zeta_{mn}) \dto {\rm N}(0, \Gamma), \hspace*{4mm} \text{as} \hspace*{2mm} n \rightarrow \infty. 
\end{align}
\noindent Then, by the definition \eqref{tq1}, since $\zeta_{in}$ is tight by \eqref{ky6} and $\sigma_{in}/\mu_{in} \rightarrow 0$, 
\begin{align}\label{ky7}
\frac{X_{in}-\mu_{in}}{\sigma_{in}} 
= \frac{\mu_{in}}{\sigma_{in}} \left(
  \exp\left(\frac{\sigma_{in}}{\mu_{in}}\zeta_{in}\right)-1 \right) 
= \zeta_{in}+o_{\rm p}(1),
\end{align}
\noindent for $1 \leq i \leq m$. Thus, the result \eqref{tgw3} follows from \eqref{ky6}.

The rest of the proof is dedicated to prove the claim in \eqref{ky6}. The
proof is routine, although we do not know any reference. 
So, we  include the argument for completeness.

Note that \eqref{tq15} holds for all $t_{in} \in [c^{\prime \prime},c^{\prime}]$ such that $t_{in} \mu_{in}/\sigma_{in}$ is an integer. By assumption, $\mu_{in}/\sigma_{in} \rightarrow \infty$, and thus the gaps between the allowed $t_{in}$ tend to $0$ as $n \rightarrow \infty$; in particular, there exist such $t_{in}$ for large enough $n$.
Define $Y_{in} \coloneqq e^{\zeta_{in}}$, for $1 \leq i \leq m$. It follows
easily from \eqref{tq15} (or, rather, the one-dimensional version, which
follows as above) that the sequences $(Y_{in})_{n \geq 1}$ are tight,
for $1 \leq i \leq m$, and thus so is the sequence of random vectors
$(Y_{1n}, \dots, Y_{mn})_{n \geq 1}$. Consider a subsequence such that
$(Y_{1n}, \dots, Y_{mn}) \dto (Y_{1}, \dots, Y_{m})$, for some random
variables $Y_1,\dots,Y_m$. Set $c^{\prime \prime \prime} \coloneqq 0.8
c^{\prime}$, say. 
Fix any real numbers 
$t_{i} \in [c^{\prime \prime},c^{\prime \prime  \prime}]$, for $1 \leq i \leq m$.
For sufficiently large $n$, 
we may find
$t_{in} \in [c^{\prime \prime},c^{\prime \prime \prime}]$ such that $t_{in}
\rightarrow t_i$ and $t_{in} \mu_{in}/\sigma_{in}$ are integers,
and also $t'_{in} \in [c'',1.1c'']$
and $t''_{in} \in [0.9 c^{\prime},c^{\prime}]$ such that
$t'_{in} \mu_{in}/\sigma_{in}$ and
$t''_{in} \mu_{in}/\sigma_{in}$ are integers.
Then, $t'_{in}\le 1.1 t_{in} \le t''_{in}$, and thus, using \eqref{tq15},
\begin{align}
  \E \lrpar{\prod_{i=1}^m Y_{in}^{t_{in}}}^{1.1}
=\E \lrsqpar{\prod_{i=1}^m Y_{in}^{1.1t_{in}}}
\le   \E \lrsqpar{\prod_{i=1}^m \bigpar{Y_{in}^{t'_{in}}+Y_{in}^{t''_{in}}}}
\le C.
\end{align}
Hence, the sequence $\prod_{i=1}^m Y_{in}^{t_{in}}$ is uniformly integrable.
 Furthermore $\prod_{i=1}^{m} Y_{in}^{t_{in}} \dto \prod_{i=1}^{m} Y_{i}^{t_i}$
 (along the subsequence), 
and thus it follows from \eqref{tq15} that
\begin{align}\label{ky1}
\E  \prod_{i=1}^{m} Y_{i}^{t_{i}}   = \lim_{n \rightarrow \infty}\E
  \prod_{i=1}^{m}  Y_{in}^{t_{in}}  = \exp \left(\frac{1}{2}
  \sum_{i,j=1}^{m} \gamma_{ij} t_{i} t_{j} \right)
.\end{align}
This holds for any $t_1,\dots,t_m \in [c^{\prime \prime},c^{\prime \prime \prime}]$.
 Since the expectation on the left-hand side of \eqref{ky1} thus is finite
 for these 
$t_1,\dots,t_m$, it follows that it is finite for all complex
$t_1,\dots,t_m$ with real parts in $(c^{\prime \prime},c^{\prime \prime \prime})$, and that it is a bounded analytic function in this domain. By analytic continuation, we thus have
\begin{align}\label{ky2}
\E \prod_{i=1}^{m} Y_{i}^{t_{i}}  = \exp \left(\frac{1}{2} \sum_{i,j=1}^{m} \gamma_{ij} t_{i} t_{j} \right), \hspace*{4mm} {\rm Re}(t_1),\dots, {\rm Re}(t_m) \in (c^{\prime \prime},c^{\prime \prime \prime}).
\end{align}
\noindent Furthermore, by taking $t_i=\frac12(c^{\prime \prime}+c''')+ \mathrm{i} u_i$ with real numbers $u_i$, it follows from the boundedness of \eqref{ky2} that the matrix $\Gamma \coloneqq (\gamma_{ij})_{i,j=1}^m$ is positive semi-definite. Hence the multivariate normal distribution ${\rm N}(0, \Gamma)$ exists. For $1 \leq i \leq m$, let $\zeta_{i} \coloneqq \ln (Y_i) \in [-\infty,\infty)$, and let $(\hat{\zeta}_1,\dots,\hat{\zeta}_m)\sim {\rm N}(0, \Gamma)$. Then, \eqref{ky2} can be written as
\begin{align}\label{ky3}
\E  \prod_{i=1}^{m} e^{t_{i} \zeta_{i}}  = \E  \prod_{i=1}^{m} Y_{i}^{t_{i}}   = \E \prod_{i=1}^{m} e^{t_{i} \hat{\zeta}_{i}} , \hspace*{4mm} {\rm Re}(t_1),\dots, {\rm Re}(t_m) \in (c^{\prime \prime},c^{\prime \prime \prime}).
\end{align}
\noindent Let $\nu$ be the distribution of $(\zeta_1,\dots,\zeta_m)$; note that this is a probability measure on $[-\infty,\infty)^m$. Also, let $\hat{\nu}$ be the distribution ${\rm N}(0, \Gamma)$. Fix some $\tau \in (c^{\prime \prime},c^{\prime \prime \prime})$, and define the conjugated measures $\nu^{\ast}$ and $\hat{\nu}^{\ast}$ on $\mathbb{R}^{m}$ by 
\begin{align}\label{ky4}
\frac{{\rm d} \nu^{\ast}}{{\rm d} \nu}(x_1,\dots,x_m) = \frac{{\rm d} \hat{\nu}^{\ast}}{{\rm d} \hat{\nu}}(x_1,\dots,x_m)= e^{\sum_{i=1}^{m} \tau x_i},  \hspace*{4mm} x_{1}, \dots, x_{m} \in \mathbb{R}. 
\end{align}
\noindent Then \eqref{ky3} (with ${\rm Re}(t_{i})=\tau$ for each $1 \leq i \leq m$) implies that the finite measures $\nu^{\ast}$ and $\hat{\nu}^{\ast}$ have the same Fourier transform; consequently, $\nu^{\ast} = \hat{\nu}^{\ast}$. This implies that the measures $\nu$ and $\hat{\nu}$ coincide on $\mathbb{R}^{m}$. Since $\hat{\nu}$ is a probability measure, this shows that $\nu(\mathbb{R}^{m})=1$, and
thus $\nu$ is concentrated on $\mathbb{R}^{m}$. In other words, $\zeta_{i}>-\infty$ a.s., and $(\zeta_1,\dots,\zeta_m)\sim {\rm N}(0, \Gamma)$. 

Consequently, we have shown that 
\begin{align}
 (e^{\zeta_{1n}}, \dots, e^{\zeta_{mn}})  
=(Y_{1n},\dots,Y_{mn})
\dto (Y_{1},\dots,Y_{m})\eqd
(e^{\hat{\zeta}_{1}}, \dots, e^{\hat{\zeta}_{m}}) , \hspace*{4mm} \text{as} \hspace*{2mm} n \rightarrow \infty,
\end{align}
\noindent along every convergent subsequence, and thus for the full sequence. Then, the claim \eqref{ky6} follows by taking logarithms. 
\end{proof}

\begin{remark}
  The original theorem by Gao and Wormald \cite[Theorem 1]{Gao2004} 
is essentially the case $m=1$ of \refT{TGW}.
(The condition $\gs_{in}\ll\mu_{in}$ is omitted in \cite{Gao2004} by
mistake, but it is clearly needed.)
However, there is a technical difference in that they assume \eqref{tgw2}
(for $m=1$) only for $c_1 \mu_n/\gs_n\le k_n \le c \mu_n/\gs_n$, for some
arbitrary $0<c_1<c$, while we assume it for all $k_n\le c \mu_n/\gs_n$.

In fact, the proof above, with minor modifications, shows that 
in the multivariate version in \refT{TGW}, it suffices to 
assume \eqref{tgw2}
for $k_{in}\in [c_1 \mu_{in}/\gs_{in}, c \mu_{in}/\gs_{in}]\cup\set0$,
i.e., it suffices to assume, for each $i$, that either $k_i=0$ or $k_i$ is
in a range as in \cite{Gao2004}.
In other words, it suffices to restrict $k_{in}$ as in \cite{Gao2004} if we
also assume that \eqref{tgw2} holds for the vector $(X_{in})_{i\in J}$, 
for every subset $J$ of $\set{1,\dots,m}$.
\end{remark}

\section{Proof of Theorem \ref{TheoDegreeS}} 
\label{AppendixB}

In this appendix, we give a new, simple, proof of
\refT{TheoDegreeS} under condition \ref{case1GW} or \ref{case2GW} 
there (stated in \refT{TCorollary1}), by
using the multivariate Gao--Wormald
theorem in \refApp{AppendixA}.

\begin{proof}[Proof of \refT{TheoDegreeS} assuming \ref{case1GW}  or
  \ref{case2GW}] 

We consider first a general weight sequence $\bw$, assuming only
$\rho_\bw>0$. 
By \refR{RGWequiv}, we may replace $\bw$ by the probability distribution
$\theta(\bw)$. In other words, we may and will assume that $\bw$ is a
probability distribution on $\bbNo$; thus $\cT_{\bw,n}$ is a 
conditioned Galton--Watson tree
with this offspring distribution.

Let $\xi_1,\xi_2,\dots$ be a sequence of independent random variables with
distribution $\bw$, and define the partial sums
\begin{align}\label{bb1}
  S_n:=\sum_{j=1}^n \xi_j,
\qquad n\ge0,
\end{align}
and
\begin{align}\label{bb1a}
  \tS_n:=\sum_{j=1}^n (\xi_j-1)=S_n-n,
\qquad n\ge0.
\end{align}

There is a well-known bijection between $\bbT_n$ and the set $\bbE^n$ of
excursions of length $n$, which combines the bijections 
between $\bbT_\bn$ and $\bbE_\bn$ mentioned in \refS{Strees}
for different $\bn$ with $|\bn|=n$.
Let us denote this bijection by $\gU$.
By \eqref{tw5}, for any tree $T\in\bbT_n$, the corresponding excursion 
$\gU(T)\in\bbE^n$
has increments that are the vertex degrees minus $1$, i.e., 
$d_T(i)-1$, $i=1\dots,n$, in some order.
It follows that for the conditioned Galton--Watson tree $\cT_{\bw,n}$,
the random excursion $\gU(\cT_{\bw,n})\in\bbE^n$ 
has a distribution that equals the
distribution of $\xS_n:=(\tS_0,\dots,\tS_n)$
conditioned on $\xS_n\in\bbE^n$.
This means that the degree statistic $\bn_{\cT_{\bw,n}}$ has the same distribution
as the sequence $\bigpar{|\set{1\le j\le n: \xi_j=i}|:i\ge0}$
conditioned on $\xS_n\in\bbE^n$.
Furthermore, since the Vervaat transformation (see \refS{Strees})
is an $n$-to-$1$ map of the set of bridges $\bbB^n$ to $\bbE^n$, which only
permutes the set of increments,
we also have the equality in distribution
\begin{align}\label{bb2}
\bn_{\cT_{\bw,n}}
=
\xpar{n_{\cT_{\bw,n}}(i): i\ge0}
&\eqd
\bigpar{\xpar{|\set{1\le j\le n: \xi_j=i}|:i\ge0}
\mid \xS_n\in\bbB^n}
\notag\\&
=\bigpar{\xpar{|\set{1\le j\le n: \xi_j=i}|:i\ge0}
\mid S_n=n-1}
,\end{align}
where the final equality follows from \eqref{tw1} and \eqref{bb1a}.
We use this to compute factorial moments of 
$n_{\cT_{\bw,n}}(i)$.
Fix $k \in \mathbb{N}_{0}$, and let $q_{0n},\dots,q_{kn}$ be non-negative
integers.
Then \eqref{bb2} shows that
$\prod_{i=0}^k \xpar{n_{\cT_{\bw,n}}(i)}_{q_{in}}$ 
has the same distribution as the number of arrays of distinct indices 
$\xpar{j_{i\ell}: 0\le i\le k, \, 1\le \ell \le q_{in}}$ such that
$\xi_{j_{i\ell}}=i$ for all $i$ and $\ell$, conditioned on 
$S_n=n-1$.
For each choice of indices $\xpar{j_{i\ell}}$,
the probability that $\xi_{j_{i\ell}}=i$ for all $i$ and $\ell$
is $\prod_{i=0}^k w_i^{q_{in}}$, and $S_n=n-1$ if and only if the sum of the
remaining $n-\sum_{i=0}^k q_{in}$ variables $\xi_j$ equals $n-1-\sum_{i=0}^k iq_{in}$.
Hence, \eqref{bb2} yields the factorial moments
\begin{align}\label{bb4}
\E \prod_{i=0}^k \xpar{n_{\cT_{\bw,n}}(i)}_{q_{in}}
= (n)_{\sum_{i=0}^k q_{in}}\prod_{i=0}^k w_i^{q_{in}}\cdot
\frac{\P\bigpar{S_{n-\sum_{i=0}^k q_{in}}=n-1-\sum_{i=0}^k iq_{in}}}
{\P\xpar{S_{n}=n-1}}.
\end{align}

Note that we may ignore indices $i$ with $w_i=0$ in \eqref{fi11}, since 
then $n_{\cT_{\bw,n}}(i)=0=\theta_i(\bw)$, while \eqref{fi12}--\eqref{fi13} yield
$\dgamma(i,j)=0$ for every $0 \le j \le k$.
Hence, in the sequel we 
assume $q_{in}=0$ when $w_i=0$, which means that we really
consider only $0 \leq i\le k$ with $w_i>0$.
Also, assume that $q_{in}\le C\sqrt n$ for all $0 \leq i\le k$ and some fixed constant
$C >0$. 
We estimate below \eqref{bb4} as \ntoo;
the estimates  will be uniform for all such $(q_{in})_{i=0}^k$.

Let $\mu_{in}:=nw_i$ and $\gs_{in}:=\sqrt n$, for $0 \leq i \leq k$. Then \eqref{bb4} and \refL{L0}
yield
\begin{align}\label{bb5}
\E \prod_{i=0}^k \xpar{n_{\cT_{\bw,n}}(i)}_{q_{in}}
= \prod_{i=0}^k\mu_{in}^{q_{in}}
\cdot\exp\lrpar{-\frac1{2n}\Bigpar{\sum_{i=0}^k q_{in}}^2+O\xpar{n\qqw}}
\frac{\P\bigpar{S_{n-\sum_{i=0}^k q_{in}}=n-1-\sum_{i=0}^k iq_{in}}}
{\P\xpar{S_{n}=n-1}}.
\end{align}
Hence we see that what remains to apply \refT{TGW} is a suitable local limit
theorem for the sums $(S_n, n \geq 0)$. We consider the cases \ref{case1GW}
and \ref{case2GW} separately.

\pfitemref{case1GW}
In this case, the distribution $\bw=\gth(\bw)$ has mean $1$ and finite
variance $\gss_\bw>0$.
Let $h\ge1$ be the span of this distribution, i.e., the largest integer such
that $\xi_1$ a.s.\ is a multiple of $h$. Then $Z_n(\bw)>0$ only if $n\equiv
1\pmod{h}$, so we consider only such $n$;
moreover, by assumption
$q_{in}>0$ only if $w_i>0$ and thus $i\equiv0\pmod h$. 
Hence, the standard local
central limit theorem, see e.g.\ \cite[Theorem 7.1]{Petrov},
yields, recalling $q_{in}=O(\sqrt n)$,
\begin{align}
 \P\Bigpar{S_{n-\sum_{i=0}^k q_{in}}=n-1-\sum_{i=0}^k iq_{in}}
&=\frac{h}{\sqrt{2\pi\gss_\bw n}}
\exp\lrpar{-\frac{\lrpar{\sum_{i=0}^k iq_{in}-\sum_{i=0}^k q_{in}}^2}{2\gss_\bw n}+o(1)},
\\
 \P\Bigpar{S_{n}=n-1}
&=\frac{h}{\sqrt{2\pi\gss_\bw n}}
\exp\lrpar{o(1)}.
\end{align}
Hence,
\eqref{bb5} yields, recalling \eqref{fi12}--\eqref{fi13} and our assumption
$\gth(\bw)=\bw$, 
\begin{align}\label{bb8}
\E \prod_{i=0}^k \xpar{n_{\cT_{\bw,n}}(i)}_{q_{in}}
&= \prod_{i=0}^k\mu_{in}^{q_{in}}
\cdot\exp\lrpar{
-\frac{\lrpar{\sum_{i=0}^k q_{in}}^2+\lrpar{\sum_{i=0}^k (i-1)q_{in}}^2/\gss_\bw}{2 n}
+o(1)}
\notag\\&
= \prod_{i=0}^k\mu_{in}^{q_{in}}
\cdot\exp
\lrpar{\frac12\sum_{i,j=0}^k\frac{n\dgamma(i,j)-\gd_{ij}\mu_{in}}{\mu_{in}\mu_{jn}}q_{in}q_{jn}
+o(1)}.
\end{align}
Hence, \refT{TheoDegreeS} follows 
by \refT{TGW} (with $\gs_{in}:=\sqrt n$) in case \ref{case1GW}.

\pfitemref{case2GW}
This is similar. By assumption, there exist sequences of constants $a_n>0$
and $b_n$ such that $(S_n-b_n)/a_n$ converges in distribution to some stable
random variable $Y$ of index $\alpha \in (1,2]$.
Let again $h$ be the span of the distribution $\bw$, and let $g_Y(x)$ be the
density function of the stable limit $Y$.
Then, a local limit theorem holds, see e.g.\ 
\cite[\S\ 50]{GneKol}.
Since we assume that the variance of $\xi_1$ is
infinite, we have $a_n\gg n\qq$, see \cite[XVII.(5.23)]{FellerII}, and thus
$q_{in}=o(a_n)$, and then the local limit theorem yields, simply,
\begin{align}
 \P\Bigpar{S_{n-\sum_{i=0}^k q_{in}}=n-1-\sum_{i=0}^k iq_{in}}
&=\frac{h}{a_n} g_Y(0)
\exp\lrpar{o(1)},
\\
 \P\Bigpar{S_{n}=n-1}
&=\frac{h}{a_n} g_Y(0)
\exp\lrpar{o(1)},
\end{align}
and hence, since $g_Y(0)>0$ 
(see e.g.\ \cite[Remark 4 after Theorem 2.2.3, pp.\ 79--80]{Zolotarev1986}),
\begin{align}\label{bb11}
\frac{\P\bigpar{S_{n-\sum_{i=0}^k q_{in}}=n-1-\sum_{i=0}^k iq_{in}}}
{\P\xpar{S_{n}=n-1}}
\to1,
\end{align}
as \ntoo.
It follows from \eqref{bb5} that \eqref{bb8} holds in this case too, now
with $\gss_\bw=\infty$, and the proof in case \ref{case2GW}
is completed as above by \refT{TGW}.
\end{proof}

\begin{remark}\label{RB1}
  We see from the proof above that \refT{TheoDegreeS} holds for any weight
  sequence $\bw$ such that the probability distribution $\gth(\bw)$
  satisfies a nice local limit theorem for the sums $(S_n, n \geq 0)$.
We do not know whether that holds in full generality, but we note that any
extension of the local limit theorems used above gives a partial answer to 
\refP{ProbPaul}.
However, it is conceivable that there are weight sequences for which such a
local limit theorem fails, but nevertheless \refT{TheoDegreeS} holds.
\end{remark}
\end{appendices}


\providecommand{\bysame}{\leavevmode\hbox to3em{\hrulefill}\thinspace}
\providecommand{\MR}{\relax\ifhmode\unskip\space\fi MR }
\providecommand{\MRhref}[2]{%
  \href{http://www.ams.org/mathscinet-getitem?mr=#1}{#2}
}
\providecommand{\href}[2]{#2}

\end{document}